\documentclass[a4paper,11pt,reqno]{amsart}
\usepackage{amsmath,amsthm,amssymb}
\usepackage[dvips]{graphicx} 
\usepackage{psfrag}
\usepackage{color}
\usepackage[body={16cm,22.5cm},centering]{geometry} 
\hfuzz6pt 
\usepackage{enumerate}
\usepackage{latexsym}
\usepackage{stmaryrd}
\usepackage[normalem]{ulem}


\theoremstyle{plain}
\newtheorem{theorem}{Theorem}[section]
\newtheorem{lemma}[theorem]{Lemma}
\newtheorem{proposition}[theorem]{Proposition}

\newtheorem*{conjecture*}{Conjecture}\theoremstyle{definition}
\newtheorem{definition}[theorem]{Definition}

\theoremstyle{remark}
\newtheorem{remark}[theorem]{Remark}

\numberwithin{equation}{section}

\newcommand{\R}{\mathbb{R}} 
\newcommand{\N}{\mathbb{N}}
\newcommand{\Z}{\mathbb{Z}}

\newcommand{\Om}{\Omega}

\newcommand{\dd}{\mathrm{d}}
\renewcommand{\div}{\operatorname{div}}
\newcommand{\eps}{\varepsilon}
\newcommand{\id}{\mathbf{id}}
\newcommand{\res}{\mathop{\hbox{\vrule height 7pt width .5pt depth 0pt \vrule height .5pt width 6pt depth 0pt}}\nolimits\,}

\renewcommand{\vec}[1]{\text{\boldmath $#1$}}
\newcommand{\vecg}[1]{\text{\boldmath $#1$}}

\newcommand{\A}{\mathcal{A}}
\newcommand{\As}{\mathcal{A}_s}
\newcommand{\Asr}{\mathcal{A}_s^r}
\newcommand{\oAsr}{\overline{\Asr}}

\newcommand{\p}{\partial}
\renewcommand{\t}{\theta}
\renewcommand{\d}{\delta}

\newcommand{\mc}{\mathcal}

\newcommand{\weakc}{\rightharpoonup}
\newcommand{\weakcs}{\overset{*}{\rightharpoonup}}

\DeclareMathOperator{\cof}{cof}
\DeclareMathOperator{\Det}{Det}
\DeclareMathOperator{\imG}{im_G}
\DeclareMathOperator{\imT}{im_T}
\DeclareMathOperator{\imTBN}{im_T^{BN}}
\DeclareMathOperator{\loc}{loc}
\DeclareMathOperator{\adj}{adj}
\DeclareMathOperator{\supp}{supp}
\DeclareMathOperator{\dist}{dist}
\DeclareMathOperator{\loca}{loc}

\DeclareMathOperator{\rel}{rel}
\DeclareMathOperator{\tr}{tr}
\DeclareMathOperator{\sgn}{sgn}
\def\R{\mathbb{R}}

\def\E{\mathcal E}

\def \p{\partial}

\def \t {\theta}
\def \e {\varepsilon}

\def \O {\Omega}

\mathchardef\emptyset="001F


\begin{document}
\title[Relaxation of the neo-Hookean energy in 3D]{Harmonic dipoles and the relaxation of the neo-Hookean energy in 3D elasticity}
\author{Marco Barchiesi}
\author{Duvan Henao}
\author{Carlos Mora-Corral}
\author{R\'emy Rodiac}
\date{\today}

\address[Marco Barchiesi]{
Dipartimento di Matematica e Geoscienze, Universit\`a degli Studi di Trieste,
Via Weiss 2 - 34128 Trieste, Italy.
}
\email{barchies@gmail.com}

\address[Duvan Henao]{Faculty of Mathematics \(\&\) Institute for Mathematical and Computational Engineering, Pontificia Universidad Católica de Chile,
Vicu\~na Mackenna 4860, Macul, Santiago, Chile. (Current address: Instituto de Ciencias de la Ingenier\'ia, Universidad de O'Higgins, Rancagua, Chile.)}
\email{dahenao@uc.cl}

\address[Carlos Mora-Corral]{Departamento de Matem\'aticas, Universidad Aut\'onoma de Madrid,
28049 Madrid, Spain  and Instituto de Ciencias Matem\'aticas,
CSIC-UAM-UC3M-UCM, 28049 Madrid, Spain.
}
\email{carlos.mora@uam.es}

\address[R\'emy Rodiac]{
Universit\'e Paris-Saclay, CNRS\\
Laboratoire de Math\'ematiques d'Orsay\\
91405, Orsay, France\\
}
\email{remy.rodiac@math.universite-paris-saclay.fr}

\begin{abstract}
We consider the problem of minimizing the neo-Hookean energy in \(3D\). The difficulty of this problem is that
the space of maps without cavitation is not compact, as shown by Conti \& De Lellis with a pathological example involving a dipole.
In order to rule out this behaviour we consider the relaxation of the neo-Hookean energy in the space of axisymmetric maps without 
cavitation. We propose a minimization space and a new explicit energy penalizing the creation of dipoles. 
This new energy, which is a lower bound of the relaxation of the original energy, bears strong similarities with the relaxed energy 
of Bethuel-Brezis-H\'elein in the context of harmonic maps into the sphere.
\end{abstract}

\maketitle


\section{Introduction}

\subsection{A regularity problem for the well-posedness of the neo-Hookean model}

We consider the problem of the existence of minimizers for the neo-Hookean energy, i.e.,
\begin{equation}\label{eq:E}
E(\vec u)=\int_\Om \left[ |D \vec u|^2+ H(\det D \vec u) \right] \dd\vec x 
\end{equation}
where \(H: (0, \infty) \rightarrow [0, \infty) \) is a convex function such that 
\begin{align}
	\label{eq:explosiveH}
\lim_{t\rightarrow \infty} \frac{H(t)}{t}=\lim_{s \rightarrow 0} H(s)=\infty,
\end{align}
\( \Om \subset \R^3 \) represents the reference configuration of an elastic body, and  \( \vec u: \Om \rightarrow \R^3 \) 
is the deformation map.
The neo-Hookean energy is widely used in physics, engineering and materials science and it can be derived from first principles \cite{Treloar75,Weiner17}
assuming that the gradient of the deformation remains bounded. Since interpenetration of matter is physically unrealistic,
minimizers are sought in a suitable subclass of 
\begin{equation*}
 \A := \{ \vec u \in H^1(\Om,\R^3) : \, E(\vec u)<\infty, \, \vec u \text{ is injective a.e.} \}.
\end{equation*}

In his celebrated existence theory, Ball \cite{Ball77} was able to apply the direct method of the calculus of variations to 
general polyconvex energies 
\begin{equation}\label{eq:generalW}
\int_\Om W( \vec x, D \vec u( \vec x )) \, \dd \vec x,
\end{equation}
where \( W:\Om \times \R^{3 \times 3} \rightarrow \R \cup\{\infty \} \)
is the elastic stored-energy function of the material. His approach is based on the identity 
\begin{equation*}
	\det D\vec u = \Det D\vec u, \quad \langle \Det D\vec u, \varphi \rangle := - \frac{1}{3} \int_\Omega  \vec u(\vec x) \cdot \big ( (\cof D\vec u) D\varphi \big ) \, \dd \vec x ,\quad \varphi \in C_c^1(\Omega),
\end{equation*}
whereby the Jacobian determinant can be written as a distributional divergence, and an analogous identity for $\cof D\vec u$, 
the matrix of $2\times 2$ cofactors of the deformation gradient. 
The identities are obtained from a coercivity assumption on the stored-energy function, the sharpest version of it (due to M\"uller, Tang \& Yan \cite{MuQiYa94}) being that 
\begin{equation*}
 W(\vec x, \vec F) \geq c_1 |\vec F|^2+c_2|\cof \vec F|^{3/2} + H(\det \vec F) , \qquad (\vec x, \vec F) \in \O \times \R^{3 \times 3}.
\end{equation*}
However, this coercivity excludes the neo-Hookean materials.
In fact, for neo-Hookean materials the hypothesis of finite energy alone is insufficient to ensure that $\det D\vec u = \Det D\vec u$, 
as shown in the models for cavitation \cite{Ball1982,Sverak88,MuSp95,SiSp00}. 
Because of that, the neo-Hookean energy is not $H^1$-quasiconvex, which is necessary for \eqref{eq:generalW} 
to be $H^1$-weakly lower semicontinuous in $\A$, as proved by Ball \& Murat \cite{BallMurat1984}. 

In order to overcome the lack of $H^1$-weakly lower semicontinuity of the neo-Hookean energy in  $\A$ due to cavitation, one may
look for minimizers in the smaller class $\A^r$ of maps in $\A$ for which the divergence identities 
\begin{align}
\label{eq:divergence_identities}
\mathrm{Div}\, \big ( (\adj D\vec u)\vec g\circ \vec u \big ) = (\div \vec g)\circ \vec u \,\det D\vec u\quad \forall\,\vec g \in C_c^1(\R^3,\R^3)
\end{align}
(of which $\det D\vec u=\Det D\vec u$ is a particular case) are satisfied, see e.g.\ \cite{GiMoSo89}, \cite{Muller90}, \cite{HeMo10}.
Unluckily, one has then to face a problem of lack of compactness:
Conti \& De Lellis \cite{CoDeLe03} constructed a sequence of deformations satisfying the divergence identities \eqref{eq:divergence_identities}, 
weakly converging in \(H^1\) but such that the limit does not satisfy \eqref{eq:divergence_identities}. 
In this paper we try to overcome this obstruction.

Since we want to rule out the formation of anomalies at the boundary we assume
that $\Om \Subset \widetilde{\Om}$, where $\widetilde{\Om}$ is a smooth bounded domain of $\R^3$,
and require the deformations $\vec u$ to coincide with a bounded $C^1$ orientation-preserving
diffeomorphism $\vec b:\widetilde{\O} \rightarrow \R^3$ not only on $\partial \Omega$ but on the whole of
$\widetilde\Omega \setminus \Omega$, and to be injective a.e.\ on the whole of $\widetilde\Omega$.
This setting was used before in elasticity \cite{Sverak88,SiSp00,HeMoXu15}, and can nevertheless be avoided 
with the techniques of \cite{HeMoOl21}. 

Set
\[ \O_{\vec b} := \vec b (\O) , \qquad \widetilde{\O}_{\vec b} := \vec b (\widetilde{\O}) .
\]
Since the example of Conti \& De Lellis is axisymmetric, 
we assume that $\O$, $\widetilde{\O}$ and $\vec b$ are axisymmetric (see the definition \eqref{eq:axi} in Section \ref{subse:axisymmetric}).
Define 
\begin{multline}
	\label{eq:defAs_introduction}
\As :=\{ \vec u \in H^1(\widetilde{\Om},\R^3): \, \vec u \text{ is injective a.e.~and axisymmetric,}
\\
 \det D\vec u>0 \ \text{a.e.},\ 
\vec u = \vec b \text{ in } \widetilde{\Om} \setminus \Om, 
\ \text{and}\  E(\vec u)\leq E(\vec b)  \}.
\end{multline}

Interestingly, in the axisymmetric setting we can prove that \(E\) is weakly lower semicontinuous and then that
it has a minimum in \(\As \) (see Proposition \ref{prop:closedeness_of_Asym}).
However, the weak limit in the Conti-De Lellis example belongs to \(\As\). That example exhibits a \emph{dipole singularity}, i.e.,
a cavity opened at a point is filled by material coming from a small neighbourhood of another point. 
Such a flagrant interpenetration of matter can hardly be accepted as physical. 
Because of that, in building an existence theory for the neo-Hookean energy we would like to prove more regularity on 
minimizers by showing their existence in the class 
\begin{align}
	\label{eq:defAsr}
\Asr := \{\vec u \in \As: \text{the divergence identities \eqref{eq:divergence_identities} are satisfied} \} .
\end{align}

In order to minimize \(E\) in \(\Asr\) we employ a relaxation process. 
The reader can think of the minimization of a functional in $W^{1,1}$. Since that space is not weakly compact, one sets up the
problem in the larger space $BV$ of functions of bounded variation, and relaxes the functional by adding a term that takes into 
account the singular part of the distributional gradient. Here we propose something similar, but the singular part appears on the 
inverse of the deformation.
More precisely, we set up the problem in the space
\begin{align}\label{eq:defB}
\mathcal{B}:=\{\vec u \in \As: \ \widetilde{\Omega}_{\vec b} = \imG(\vec u,\widetilde{\Omega}) \text{ a.e.\ and } 
\vec u^{-1}=(u^{-1}_1,u^{-1}_2,u^{-1}_3)\in W^{1,1}(\widetilde{\Om}_{\vec b}, \R^2) \times BV(\widetilde{\Om}_{\vec b}) \} ,
\end{align}
where the geometric image $\imG(\vec u, \widetilde \Omega)$ is that of Definition \ref{def:geometric_image}.
The space $\mathcal{B}$ contains the weak $H^1$ closure of $\Asr$ (see Theorem \ref{prop:image_ouverte}).
The fact that in $\mathcal{B}$ the geometric image coincides with $\widetilde{\Omega}_{\vec b}$ means that any cavitation 
produced by a map in this class must be filled with the image of some other part of the body (as happens in the example of 
Conti \& De Lellis).

In $\mathcal{B}$ we provide a lower bound for the relaxation of $E$:
$$
F(\vec u):=\int_\Omega \left[|D\vec u|^2 + H(\det D\vec u)\right] \dd \vec x 
+ 2 \left| D^s u^{-1}_3 \right| (\tilde{\O}_{\vec b}) ,
$$
where \(|D^s u^{-1}_3|(\widetilde{\O}_{\vec b})\) denotes the total variation of the singular part of the 
distributional gradient of $u^{-1}_3$.
We believe that this lower bound is sharp, i.e.,  that $F$ coincides with the relaxation of $E$.
Indeed, in the companion paper \cite{Barchiesi_Henao_MoraCorral_Rodiac_b} we improve the construction 
of Conti \& De Lellis, by showing that the relaxation of $E$ on that limit map (with a dipole) coincides with $F$. 
Proving the sharpness is important in order to get, eventually, a negative result: if the minimizers of 
the relaxed energy do not belong to $\Asr$, then $E$ has no minimizers in $\Asr$.
In any case, the energy and the space we propose can serve to the purpose of providing a positive result,
i.e., existence of minimizers for $E$.

\begin{theorem} \label{th:main_theorem_introduction}
The energy $F$ has a minimizer in $\mathcal{B}$. Moreover, if it belongs to $\Asr$,
then it is also a minimizer of the original neo-Hookean energy $E$ of \eqref{eq:E}.
\end{theorem}

The new term $2|D^s u^{-1}_3|(\tilde{\O}_{\vec b})$ is the main contribution of this work. 
The strategy we propose to answer the question of existence of minimizers of the neo-Hookean energy in a class 
of regular maps is to obtain Sobolev regularity for the inverse \(\vec u^{-1}\) of a minimizer \(\vec u\) of \(F\) 
in \(\mathcal{B}\). This is left for future investigations. We remark that the advantage of our results is that 
both $\mathcal{B}$ and $F$ are explicit.

\subsection{The singular energy}

We explain here how the singular term \( |D^s u^{-1}_3|(\tilde{\O}_{\vec b})\) appears in Theorem \ref{th:main_theorem_introduction} 
and why we believe it is the adequate term to add to minimize the neo-Hookean energy, at least in the axisymmetric setting. 
Indeed, the only way in which the divergence identities do not pass to the limit is when the cofactors are not equiintegrable.
In this, the behaviour of the sequence in the example of Conti \& De Lellis is generic: a `stack' of surfaces with smaller and 
smaller diameters, orthogonal to the axis of symmetry,  are stretched without control.
Due to the \(2D\) nature of the axisymmetric setting (see Lemma \ref{lem:inequality_energy_area_true}), we have
$$
	\int_{C_\delta} |D\vec u_j|^2 \, \dd\vec x \geq 
	2
	\int_{C_\delta} \left| (\cof D\vec u_j)\vec e_3 \right| \dd \vec x 
	= 2\int_{\vec u_j(C_\delta)} \left| D(u^{-1}_3)_j \right| \dd\vec y
$$
where $\vec e_3$ is the direction of the symmetry axis and $C_{\d}$ is a small $\delta$-cylinder around it. 
The sets $\vec u_j(C_\delta)$ collapse to a set with  zero volume (thanks to the equiintegrability of the determinants); 
in the example by Conti \& De Lellis, they collapse to a sphere, 
which is exactly the jump set of the vertical component of the inverse $u_3^{-1}$ for the limit map $\vec u$.
However,
$$
	\int_\Omega |D\vec u|^2 \, \dd\vec x 
	=  \lim_{\delta \searrow 0} \int_{\Omega\setminus C_\delta} |D\vec u|^2 \, \dd\vec x 
	\leq  \liminf_{\delta \searrow 0}
	\liminf_{j\to\infty} \int_{\Omega\setminus C_\delta} |D\vec u_j|^2 \, \dd\vec x , 
$$
so the original formula for the neo-Hookean energy completely misses out
the concentration of the Dirichlet energy if applied directly to the singular map $\vec u$. 

Note that in the reference configuration all evidence of the abnormal activity of the regular sequence is lost and 
hidden in the one-dimensional symmetry axis; in contrast, in the deformed configuration large (two-dimensional or fractal) 
structures can remain, which make the singular map remember the energy spent in their formation. All in all, the singular term 
$2|D^s u^{-1}_3|(\widetilde{\O}_{\vec b})$
is \emph{not artificial}, it emerges naturally from the Dirichlet energy.
At the very least, we prove, cf.\ equation \eqref{eq:lowerbound}, that it is a lower bound of the abstract relaxed energy functional.

We want to make a last comment about recovering the regularity in order to obtain minimizers of the neo-Hookean energy
in the original space $\Asr$. In this task we will be confronted not with singularities that are physically relevant but 
with pathological deformations that we would prefer to exclude. 
Indeed, in the companion paper \cite{Barchiesi_Henao_MoraCorral_Rodiac_b}, under the additional assumption 
$\mathcal E(\vec u)<\infty$ for the surface energy functional defined in Section \ref{se:surface_energy}, we prove 
that for any weak limit of regular maps there exist a countable family of dipoles \(\vec \xi_i, \vec \xi_i'\) lying on the 
axis of symmetry and a countable family of sets of finite perimeter whose reduced boundaries \(\Gamma_i\) satisfy 
\begin{equation}\label{multi dipoles}
|D^s u^{-1}_3|(\widetilde{\O}_{\vec b})=\sum_{i\in \mathbb{N}} \left| \vec \xi_i - \vec \xi_i' \right| \mathcal{H}^2(\Gamma_i).
\end{equation}
The discussion is therefore about how to reach a contradiction from the assumption that a minimizing sequence of regular 
maps ends up forming those dipoles, and a successful argument could probably use that if that were the case then the 
regular maps in the sequence would produce an energy concentration of (at least)
\begin{equation}\label{eq:bubble}
	2|D^su^{-1}_3| (\widetilde{\O}_{\vec b}) = 2\cdot (\text{area of the bubble})
	\cdot (\text{length of the dipole}),
\end{equation}
which is presumably more than what a minimizer can afford.

\subsection{Connection with harmonic map theory}

Bethuel, Brezis and Coron \cite{BeBrCo90} (see also \cite{GiMoSo98II}) also derived a relaxed energy to treat a problem 
of lack of compactness in the theory of harmonic maps from a \(3D\) domain with values into \(\mathbb{S}^2\). 
The expression \eqref{eq:bubble} shows that the energy we obtain and the relaxed energy in the context of harmonic maps are very similar. In particular, the right-hand side of \eqref{multi dipoles} is the analogue of the `length of minimal connection', 
introduced in \cite{BrCoLi86}, connecting singularities of harmonic maps.
Besides, the supplementary term in the harmonic map relaxed energy can be expressed in terms of this length of minimal connection in the case where the map has a finite number of singularities. This reveals a strong connection between the problem of minimizing the neo-Hookean energy and finding a smooth minimizing harmonic map from \(\mathbb{B}^3\) into \(\mathbb{S}^2\) with a smooth boundary data with zero degree. This problem was raised by Hardt and Lin in \cite{Hardt_Lin_1986} and is still open. For the study of partial regularity and prescribed singularities problem for harmonic maps from \(\mathbb{B}^3\) to \(\mathbb{S}^2\) in the axisymmetric setting we refer to \cite{Hardt_Lin_Poon_1992} and \cite{Martinazzi_2011}.

\subsection{Recent related results}
During the process of revision of this paper, we became aware of the recent works \cite{DoHeMa21,DoHeMo22}.
Both exhibit coercivity conditions on $W$ so that condition INV (see Definition \ref{def:INV}) is preserved under the weak limit in $W^{1,N-1}$: 
through a quick enough growth to infinity when the determinant goes to zero in \cite{DoHeMa21}, and through the equiintegrability of the cofactors in \cite{DoHeMo22}.
In fact, that the equiintegrability of the cofactors implies the stability of condition INV was shown in \cite{HeMo12}.
In addition, in \cite{DoHeMa21} they construct an example of a map in the Conti--De Lellis style.

\subsection{Outline of the paper} The paper is organized as follows. In Section \ref{sec:prel} we introduce some notation and definitions that will be used in the sequel. 
More precisely, we define the geometric image and the surface energy of a map.
The latter notion quantifies the failure of the divergence identities \eqref{eq:divergence_identities}.
We then make precise the axisymmetry and specify the boundary condition.
We also introduce the notion of family of `good open sets' and show how to relate the properties of 
a \(3D\) axisymmetric map to the properties of its associated \(2D\) map. 

Section \ref{sec:existence_as} is devoted to the proof of existence of minimizers of $E$ in the class \(\As\). These minimizers could, in principle, be irregular, 
forming pathologies similar to that of the example of Conti--De Lellis. This leads to the question of whether conditions exist under which such behaviour 
can be ruled out. With this motivation in mind, our main focus in this paper is to derive an explicit energy playing the role of a relaxed energy for the 
neo-Hookean problem, in which the cost of creating pathological singularities can be made visible.

In order to do that, in Section \ref{se:fine_properties} we describe fine properties of maps in \( \As\).
We start with regularity properties of general axisymmetric maps, then we define the topological degree and topological image of maps. We will need both definitions of the classical degree for continuous functions and of the Brezis--Nirenberg degree for Sobolev maps.
A particular role is played by the topological image of the segment formed by the intersection of the domain \(\O\) and the symmetry axis.
Then we focus on the invertibility property of maps in \(\As\).
The main result of that section states that the first two components of the inverse of a map in \(\As\) are Sobolev.

In Section \ref{sec:weak_limits}, we focus first on regularity properties of weak limits of maps in \(\Asr\).
It is of crucial importance for the rest of the paper that their geometric image equals (up to a null \(\mathcal{L}^3\)-set) the entire target domain, and that their inverses

are in \(BV(\widetilde{\O}_{\vec b},\R^3)\), with the first two components in \(W^{1,1}(\widetilde{\O}_{\vec b})\).
These results rest on the preliminary analysis done in Section \ref{se:fine_properties}.

Finally, in Section \ref{sec:lower_bound}, we give a lower semicontinuity result for our candidate relaxed energy, 
hence proving a lower bound on the actual relaxed energy. We also obtain various existence results thanks to the 
previous analysis and give a proof of Theorem \ref{th:main_theorem_introduction}.

\section{Notation and preliminaries}\label{sec:prel}

\subsection{Geometric image and area formula}

In this section \(\Om\) is a bounded open set of \(\R^N\). 
We use the following notation for the density of a measurable set \(A\subset \R^N\) at \(\vec x \in \R^N\):
\[
 D(A,\vec x)=\lim_{r\to 0} \frac{|B(\vec x,r)\cap A|}{|B(\vec x,r)|} .
\]
Here we use $\left| \cdot \right|$ for the Lebesgue measure in $\R^N$.
An alternative notation is $\mathcal{L}^N$.
The Hausdorff measure of dimension $d$ is denoted by $\mathcal{H}^d$.
The abbreviation \emph{a.e.}\ for \emph{almost everywhere} or \emph{almost every} will be intensively used.
It refers to the Lebegue measure, unless otherwise stated.
Given two sets $A, B$ of $\R^N$, we write $A \subset B$ a.e.\ if $\mc{L}^N (A \setminus B) = 0$, while $A = B$ a.e.\ or $A \overset{\text{a.e.}}{=} B$ a.e.\ both mean 
$A \subset B$ a.e.\ and $B \subset A$ a.e\@.
An analogous meaning is given to the expression $\mathcal{H}^d$-a.e\@. 

The definition of approximate differentiability can be found in many places (see, e.g., \cite[Sect.\ 3.1.2.]{Federer69}, \cite[Def.\ 2.3]{MuSp95} or \cite[Sect.\ 2.3]{HeMo12}).

We recall the area formula of Federer (\cite[Prop.\ 2.6]{MuSp95} and \cite[Thm.\ 3.2.5 and Thm.\ 3.2.3]{Federer69}). We will use the notation $\mc{N}(\vec u, A,\vec y)$ for the number of preimages of a point $\vec y$ in the set $A$ under~$\vec u$.

\begin{proposition}\label{prop:area-formula}
Let $\vec u\in W^{1,1}(\O,\R^N)$, and denote the set of approximate differentiability points of $\vec u$ by $\O_d$. Then, for any measurable set $A\subset \O$ and any measurable function $\varphi:\R^N \rightarrow \R$,
\begin{equation*}
\int_A (\varphi \circ \vec u) \left| \det D \vec u \right| \dd \vec x =\int_{\R^N} \varphi(\vec y) \, \mc{N}(\vec u,\O_d\cap A,\vec y) \, \dd \vec y
\end{equation*}
whenever either integral exists. Moreover, if a map $\psi:A\rightarrow \R$ is measurable and $\bar{\psi}:\vec u(\O_d\cap A) \rightarrow \R$ is given by
\begin{equation*}
\bar{\psi}(\vec y):= \sum_{\vec x \in \O_d \cap A, \ \vec u(\vec x)= \vec y} \psi(\vec x)
\end{equation*}
then $\bar{\psi}$ is measurable and
\begin{equation}\label{eq:area-formula}
\int_A\psi(\varphi\circ \vec u) \left| \det D \vec u \right| \dd \vec x= \int_{\vec u(\O_d\cap A)} \bar{\psi} \varphi \, \dd \vec y, \ \ \vec y \in \vec u(\O_d\cap A),
\end{equation}
whenever the integral on the left-hand side of \eqref{eq:area-formula} exists.
\end{proposition}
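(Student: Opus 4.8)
The plan is to deduce this Sobolev area formula from the classical Lipschitz one by a Lusin-type approximation of $\vec u$ by $C^1$ maps. The starting point is Federer's area formula for Lipschitz maps: if $\vec g\colon\R^N\to\R^N$ is Lipschitz, then $\int_B(\varphi\circ\vec g)\,|\det D\vec g|\,\dd\vec x=\int_{\R^N}\varphi(\vec y)\,\mc N(\vec g,B,\vec y)\,\dd\vec y$ for every measurable $B\subset\R^N$ and every measurable $\varphi\ge 0$ (see \cite{Federer69}). Since $\vec u\in W^{1,1}(\Om,\R^N)$ is approximately differentiable $\mathcal L^N$-a.e., one has $|\Om\setminus\Om_d|=0$, and the Federer--Lusin decomposition provides, for each $k\in\N$, a closed set $F_k\subset\Om$ with $|\Om\setminus F_k|<1/k$ and a $C^1$ map $\vec g_k\colon\R^N\to\R^N$ such that $\vec u=\vec g_k$ on $F_k$ and, at $\mathcal L^N$-a.e.\ point of $F_k$, the approximate derivative $D\vec u$ equals $D\vec g_k$. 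Setting $E_1:=F_1\cap\Om_d$ and $E_k:=(F_k\setminus\bigcup_{j<k}F_j)\cap\Om_d$ for $k\ge 2$, these sets are pairwise disjoint with $|\Om_d\setminus\bigcup_kE_k|=0$, and on each $E_k$ one has $\vec u=\vec g_k$ and $|\det D\vec u|=|\det D\vec g_k|$ a.e. Applying the Lipschitz formula to $\vec g_k$ on $E_k\cap A$, using $\mc N(\vec u,E_k\cap A,\cdot)=\mc N(\vec g_k,E_k\cap A,\cdot)$, and summing over $k$ by monotone convergence (first for $\varphi\ge 0$, then by splitting $\varphi=\varphi^+-\varphi^-$) gives the first identity on $\Om_d\cap A$; since $|A\setminus\Om_d|=0$, this yields the claimed identity on $A$.

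One point deserves care: the critical set $S:=\{\vec x\in\Om_d:\det D\vec u(\vec x)=0\}$ contributes zero to the left-hand side, so the right-hand side must not see it either, i.e.\ $\mathcal L^N(\vec u(S))=0$. This again follows from the decomposition, since $\vec u(S\cap F_k)\subset\vec g_k(\{\det D\vec g_k=0\})$ is $\mathcal L^N$-null by the Lipschitz formula (take $\varphi=1$), while $\vec u\bigl(\Om_d\setminus\bigcup_kF_k\bigr)$ is the image of a null set and is negligible because on $\Om_d$ the map $\vec u$ satisfies Lusin's condition (N) (once more since $\vec u=\vec g_k$ on $F_k$ and Lipschitz maps have condition (N)). In particular, once the left-hand side of \eqref{eq:area-formula} with $\psi=1$ is finite, the fibres $\mc N(\vec u,\Om_d\cap A,\vec y)$ are finite for a.e.\ $\vec y$, so the sum $\bar\psi(\vec y)=\sum_{\vec x\in\Om_d\cap A,\ \vec u(\vec x)=\vec y}\psi(\vec x)$ ranges over a finite (in general countable) index set and is well defined for a.e.\ $\vec y$.

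For the weighted identity \eqref{eq:area-formula} I would bootstrap from the first identity. For $\psi=\mathbf 1_B$ with $B\subset A$ measurable one has $\bar\psi=\mc N(\vec u,\Om_d\cap B,\cdot)$, so \eqref{eq:area-formula} is exactly the first identity with $B$ in place of $A$; by linearity it extends to nonnegative simple $\psi$, and then to arbitrary measurable $\psi\ge 0$ by monotone convergence inside the a.e.-finite sums, which simultaneously exhibits $\bar\psi$ as an increasing limit of measurable functions, hence measurable. Writing $\psi=\psi^+-\psi^-$ gives the general case whenever the left-hand side of \eqref{eq:area-formula} exists. The step I expect to be the main technical burden is the bookkeeping in the Lipschitz decomposition: arranging the sets $E_k$ to be genuinely disjoint with $\Om_d$-full union up to a null set, and justifying that the approximate derivative of $\vec u$ coincides $\mathcal L^N$-a.e.\ on $E_k$ with the classical derivative of $\vec g_k$ (a density-point argument), since every subsequent identification — $\vec u=\vec g_k$, $\det D\vec u=\det D\vec g_k$, and the matching of the multiplicity functions — rests on it.
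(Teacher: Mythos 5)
The paper does not prove this proposition; it cites \cite[Prop.\ 2.6]{MuSp95} and \cite[Thms.\ 3.2.3 and 3.2.5]{Federer69}, and your argument correctly reproduces the standard proof found there (Lipschitz area formula on pieces obtained from a Lusin--Federer decomposition, monotone convergence, then bootstrapping the weighted identity through indicator and simple functions). The overall structure is sound and the second half (from $\psi=\mathbf 1_B$ to general $\psi$, and the induced measurability of $\bar\psi$) is exactly right.

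There is one bookkeeping point you should tighten, because as written it is circular. You build the $F_k$ from the Lusin-type $C^1$-approximation, which only covers $\Omega$ up to a null set $N:=\Omega_d\setminus\bigcup_k F_k$, and then you dispose of $\vec u(N)$ by appealing to ``Lusin's condition (N) on $\Omega_d$, since $\vec u=\vec g_k$ on $F_k$.'' But the fact that $\vec u=\vec g_k$ on $F_k$ only yields (N) for $\vec u|_{\bigcup_k F_k}$; it says nothing about the leftover null set $N$, which is precisely what (N) on $\Omega_d$ is supposed to control. The clean fix is to invoke Federer's \emph{exact} decomposition theorem \cite[3.1.8]{Federer69}: an a.e.\ approximately differentiable map admits countably many pairwise disjoint measurable sets $E_k$ whose union is \emph{all} of $\Omega_d$ (not merely almost all), with $\vec u|_{E_k}$ Lipschitz; then extend each $\vec u|_{E_k}$ to a Lipschitz map $\vec g_k$ on $\R^N$ by Kirszbraun/McShane, and verify $D\vec u=D\vec g_k$ a.e.\ on $E_k$ via the density-point argument you already flag. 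With this exact covering there is no residual null set to worry about, (N) for $\vec u|_{\Omega_d}$ follows (rather than being assumed), your critical-set estimate $\mathcal L^N(\vec u(S))=0$ becomes an immediate consequence of the Lipschitz formula with $\varphi\equiv 1$ on each $E_k$, and the rest of your proof goes through unchanged.
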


\begin{definition}\label{def:Om0}
Let $\vec u\in W^{1,1}(\O,\R^N)$ be such that $\det D \vec u>0$ a.e. We define $\O_0$ as the set of $\vec x\in \O$ for which the following are satisfied:
\begin{enumerate}[i)]
\item the approximate differential of $\vec u$ at $\vec x$ exists and equals $D \vec u(\vec x)$.
\item there exist $\vec w\in C^1(\R^N,\R^N)$ and a compact set $K \subset \O$ of density $1$ at $\vec x$  such that $\vec u|_{K}=\vec w|_{K}$ and $D \vec u|_{K}=D \vec w|_{K}$,
\item $\det D \vec u(\vec x)>0$.
\end{enumerate}
We note that the set \( \O_0\) is a set of full Lebesgue measure in \( \O \), i.e., \( |\O \setminus \O_0|=0\). This follows from Theorem 3.1.8 in \cite{Federer69}, 
Rademacher's Theorem and Whitney's Theorem.

Two important properties for a map are Lusin's properties (N) and (N$^{-1}$).

\begin{definition}
Let \(X \subset \R^N\) be a measurable set. We say that a measurable function \( \vec u: X \rightarrow \R^N \) satisfies Lusin's condition (N) 
if for every \(A \subset X\) such that \(|A|=0\) we have \(|\vec u(A)|=0\).
We say that \( \vec u\) satisfies condition (\(\text{N}^{-1}\)) if for every \(A\subset \R^N\) such that \(|A|=0\) we have \(|\vec u^{-1}(A)|=0\).
\end{definition}

We will use the following consequence of Proposition \ref{prop:area-formula} (see, e.g., \cite[Lemma 2.8]{BaHeMo17}).

\begin{lemma}\label{le:Lusin}
Let $\vec u \in W^{1,1} (\O, \R^N)$.
Then $\vec u|_{\O_0}$ satisfies Lusin's condition (N).
Moreover, if  $\det D\vec u (\vec x) \neq 0$ for a.e.\ $\vec x \in \O$, then $\vec u$ satisfies Lusin's (N$^{-1}$) condition.
\end{lemma}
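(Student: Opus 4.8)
The plan is to deduce both assertions directly from the area formula of Proposition \ref{prop:area-formula}; the only additional inputs are that $|\O\setminus\O_d|=0$ (a $W^{1,1}$ map is approximately differentiable a.e., so the set $\O_d$ of its approximate differentiability points has full measure --- this is also implicit in the remark following Definition \ref{def:Om0}, since $\O_0\subseteq\O_d$) and, for the second claim, the pointwise inequality $|\det D\vec u|>0$ a.e.

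\emph{Condition (N) for $\vec u|_{\O_d}$.} Let $A\subseteq\O_d$ with $|A|=0$. Taking $\varphi\equiv 1$ and the set $A$ in the first identity of Proposition \ref{prop:area-formula} gives
\[
 \int_{\R^N}\mc{N}(\vec u,A,\vec y)\,\dd\vec y=\int_A|\det D\vec u|\,\dd\vec x=0 .
\]
Since $\mc{N}(\vec u,A,\cdot)$ is nonnegative and measurable, it vanishes for a.e.\ $\vec y$; as $\{\vec y\in\R^N:\mc{N}(\vec u,A,\vec y)>0\}$ is exactly $\vec u(A)$, we conclude that $\vec u(A)$ is measurable and $|\vec u(A)|=0$. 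This is Lusin's condition (N) for $\vec u|_{\O_d}$.

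\emph{Condition (N$^{-1}$) for $\vec u$.} Assume $|\det D\vec u|>0$ a.e.\ and let $A\subseteq\R^N$ with $|A|=0$. Enlarging $A$ to a Borel null set we may assume $\vec u^{-1}(A)$ is measurable, and since $|\O\setminus\O_d|=0$ it suffices to show that $B:=\vec u^{-1}(A)\cap\O_d$ is null. Apply Proposition \ref{prop:area-formula} with $\varphi=\chi_A$ and the set $B$: on $B$ one has $\varphi\circ\vec u\equiv 1$, so the left-hand side equals $\int_B|\det D\vec u|\,\dd\vec x$, while on the right-hand side the integrand $\chi_A(\vec y)\,\mc{N}(\vec u,B,\vec y)$ is nonnegative and supported on the null set $A$, hence integrates to $0$. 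Therefore $\int_B|\det D\vec u|\,\dd\vec x=0$, and since $|\det D\vec u|>0$ a.e.\ this forces $|B|=0$, whence $|\vec u^{-1}(A)|=0$.

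The computation itself is routine; the only point deserving care is \emph{measurability} --- that $\vec u(A)$ and $\vec u^{-1}(A)$ are Lebesgue measurable and that the area formula may be legitimately invoked on a merely measurable (rather than open) set and with a merely measurable integrand. This is precisely what the version of the area formula recorded in Proposition \ref{prop:area-formula} provides (it asserts measurability of the multiplicity function $\mc{N}(\vec u,\cdot,\cdot)$, which exhibits $\vec u(A)$ as a measurable set, and of the map $\bar\psi$). Consequently there is no genuine obstacle here, and the lemma is a formal corollary of Federer's area formula.
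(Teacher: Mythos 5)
Your proof is correct and follows the same route the paper indicates: both claims are read off directly from Federer's area formula (Proposition \ref{prop:area-formula}), by choosing $\varphi\equiv 1$ with a null subset of $\O_d$ for condition (N), and $\varphi=\chi_A$ with $B=\vec u^{-1}(A)\cap\O_d$ for condition (N$^{-1}$), the positivity of $|\det D\vec u|$ then forcing $|B|=0$. This is exactly the argument the paper outsources to \cite[Lemma 2.8]{BaHeMo17}, and your attention to the measurability and to legitimately applying the area formula on merely measurable sets is appropriate.
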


\end{definition}
\begin{definition}\label{def:geometric_image}
For any measurable set $A$ of $\O$, the geometric image of $A$ under $\vec u$ is  
\begin{equation*}
\imG(\vec u,A) : =\vec u(A\cap \O_0) ,
\end{equation*}
with \(\Om_0\) as in Definition \ref{def:Om0}.
\end{definition}

\subsection{The surface energy}
	\label{se:surface_energy}
In this subsection \(N \in \mathbb{N}\) and \(\O \subset \R^N\) is a bounded open set. Let $ \vec u\in W^{1,N-1}(\O,\R^N)$ be such 
that $\det D \vec u \in L^1(\O)$.

The adjugate matrix $\adj \vec F$ of $\vec F \in \R^{N \times N}$ satisfies $(\det \vec F) \vec{I} = \vec F \adj \vec F$, where $\vec{I}$ denotes the identity matrix.
The transpose of $\adj \vec F$ is the cofactor $\cof \vec F$.
We start by observing that, when $N=3$, \( \left| \cof \vec F \right|\) is controlled in terms of \( |\vec F|^2\).
The proof of the following result is elementary and based on singular value decomposition.
\begin{lemma}\label{eq:ineq-energy-area}
$|\vec F|^2 \geq \sqrt{3} \left| \cof \vec F \right|$ for all $\vec F \in \R^{3 \times 3}$, with optimal constant.
\end{lemma}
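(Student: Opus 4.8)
The statement to prove is the elementary linear-algebra inequality $|\vec F|^2 \geq \sqrt{3}\,|\cof \vec F|$ for all $\vec F \in \R^{3\times 3}$, with optimal constant. Here $|\vec F|$ is the Frobenius (Hilbert–Schmidt) norm and $|\cof \vec F|$ is the Frobenius norm of the cofactor matrix.

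\medskip

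\textbf{Plan of proof.}
The natural approach is via the singular value decomposition. Write the singular values of $\vec F$ as $\sigma_1,\sigma_2,\sigma_3 \geq 0$. Then $|\vec F|^2 = \sigma_1^2+\sigma_2^2+\sigma_3^2$, while the singular values of $\cof \vec F$ are the pairwise products $\sigma_2\sigma_3,\ \sigma_1\sigma_3,\ \sigma_1\sigma_2$, so that $|\cof \vec F|^2 = \sigma_1^2\sigma_2^2+\sigma_1^2\sigma_3^2+\sigma_2^2\sigma_3^2$. Hence the claim reduces to showing, for all nonnegative reals $a=\sigma_1^2, b=\sigma_2^2, c=\sigma_3^2$, that
\[
(a+b+c)^2 \geq 3\,(ab+bc+ca).
\]
This last inequality is immediate: $(a+b+c)^2 - 3(ab+bc+ca) = a^2+b^2+c^2 - ab-bc-ca = \tfrac12\big[(a-b)^2+(b-c)^2+(c-a)^2\big] \geq 0$.

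\medskip

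\textbf{Optimality of the constant.}
Equality in the reduced inequality holds exactly when $a=b=c$, i.e.\ when all three singular values of $\vec F$ are equal, $\sigma_1=\sigma_2=\sigma_3=\sigma$. Taking for instance $\vec F = \sigma\,\id$ (the identity scaled by $\sigma>0$), we get $|\vec F|^2 = 3\sigma^2$ and $|\cof \vec F| = |\sigma^2\,\id| = \sqrt{3}\,\sigma^2$, so $|\vec F|^2 = \sqrt{3}\,|\cof \vec F|$ and the constant $\sqrt{3}$ cannot be improved.

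\medskip

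\textbf{Main obstacle.}
There is essentially no obstacle here; the only point requiring a little care is the identification of the singular values of $\cof \vec F$ in terms of those of $\vec F$. This follows by writing $\vec F = \vec R\, \vec D\, \vec Q^T$ with $\vec R, \vec Q$ orthogonal and $\vec D = \operatorname{diag}(\sigma_1,\sigma_2,\sigma_3)$, and using that $\cof(\vec A \vec B) = (\cof \vec A)(\cof \vec B)$ together with $\cof \vec R, \cof \vec Q$ orthogonal and $\cof \vec D = \operatorname{diag}(\sigma_2\sigma_3,\sigma_1\sigma_3,\sigma_1\sigma_2)$; since the Frobenius norm is invariant under multiplication by orthogonal matrices on either side, $|\cof \vec F| = |\cof \vec D|$. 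An alternative, coordinate-based proof avoiding the SVD is also possible — expand $|\vec F|^4$ and $|\cof \vec F|^2$ directly in the entries of $\vec F$ and reduce to a sum of squares — but the SVD argument is cleaner and makes the optimality transparent.
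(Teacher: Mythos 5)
Your proof is correct and follows essentially the same route as the paper's: reduce by singular value decomposition to a diagonal matrix (using orthogonal invariance of the Frobenius norm on both sides), reduce the inequality to $(a+b+c)^2 \geq 3(ab+bc+ca)$ for $a,b,c\geq 0$, and conclude by the sum-of-squares identity, with equality at $\vec F = \vec I$. The only cosmetic difference is that you spell out the identification of the singular values of $\cof \vec F$ via $\cof(\vec A\vec B)=(\cof\vec A)(\cof\vec B)$, which the paper leaves implicit.
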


\begin{definition}
Let $ \vec u\in W^{1,1} (\O,\R^N)$ be such that $\cof D \vec u \in L^1(\O, \R^{N \times N})$ and $\det D \vec u \in L^1(\O)$.

\begin{enumerate}[a)]
\item For every $\phi \in C^1_c(\O)$ and $\vec g\in C^1_c(\R^N,\R^N)$ we define
\begin{equation*}
\overline{\E}_{\vec u }(\phi,\vec g)=\int_\O\left[ \vec g( \vec u(\vec x))\cdot \left( \cof D \vec u(\vec x) D\phi(\vec x) \right) + \phi( \vec x)\div \vec g ( \vec u(\vec x)) \det D \vec u(\vec x) \right] \dd \vec x .
\end{equation*}

\item For all $\vec f \in C^1_c(\O\times \R^N,\R^N)$ we define 
\begin{equation*}
\E_{\vec u}(\vec f)= \int_\O \left[ D_\vec x\vec f(\vec x,\vec u(\vec x))\cdot \cof D\vec u(\vec x)+\div_\vec y \vec f(\vec x,\vec u(\vec x)) \det D \vec u(\vec x) \right] \dd \vec x 
\end{equation*}
and
\[
 \E(\vec u)= \sup \{ \E_{\vec u}(  \vec f): \, \vec f \in C^1_c(\O\times \R^N,\R^N) , \, \| \vec f \|_{L^{\infty}} \leq 1 \} .
\]
\end{enumerate}
\end{definition}

Sometimes we will use the notation $\E (\vec u, V)$ to refer to $\E (\vec u |_V)$, where $V$ is an open subset of $\O$.
Clearly, $\overline \E \leq \E$.
The following result shows that if $\overline \E$ vanishes, so does $\E$. 
This implies that 
the divergence identities \eqref{eq:divergence_identities} are satisfied if and only if the surface energy $\mathcal E$ is identically zero.
The proof consists in using the continuity of \( \vec f \mapsto \E_{ \vec u} ( \vec f )\) and the density of the linear span of products of 
functions of separated variables in \( C^1_c(\R^N,\R^N)\) (see, e.g., \cite[Cor.\ 1.6.5]{Llavona86}).

\begin{lemma}\label{lem:link_Ebar_E}
Let $ \vec u\in W^{1,N-1}(\O,\R^N)$ be such that $\det D \vec u \in L^1(\O)$.
If $\overline{\E}(\vec u)=0$ then $\E(\vec u)=0$.
\end{lemma}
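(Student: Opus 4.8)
The plan is to recognise the bilinear form $\overline\E_{\vec u}$ as the restriction of the functional $\vec f\mapsto\E_{\vec u}(\vec f)$ to test maps whose variables separate, and then to pass from $\overline\E(\vec u|_V)=0$ to $\E(\vec u|_V)=0$ by combining a continuity estimate with a density argument. For the first point: given $\phi\in C^1_c(V)$ and $\vec g\in C^1_c(\R^N,\R^N)$, put $\vec f(\vec x,\vec y):=\phi(\vec x)\,\vec g(\vec y)\in C^1_c(V\times\R^N,\R^N)$. Then $D_{\vec x}\vec f(\vec x,\vec y)=\vec g(\vec y)\otimes D\phi(\vec x)$ and $\dive_{\vec y}\vec f(\vec x,\vec y)=\phi(\vec x)\,\dive\vec g(\vec y)$, whence $D_{\vec x}\vec f\cdot\cof D\vec u=\vec g\cdot(\cof D\vec u\,D\phi)$ and therefore $\E_{\vec u|_V}(\vec f)=\overline\E_{\vec u|_V}(\phi,\vec g)=0$. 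By linearity of $\vec f\mapsto\E_{\vec u|_V}(\vec f)$, the functional $\E_{\vec u|_V}$ then vanishes on the linear span $\mathcal S$ of all such products $\phi(\vec x)\vec g(\vec y)$, equivalently on the span of the maps $\psi(\vec x)\,\chi(\vec y)\,\vec e_i$ with $\psi\in C^1_c(V)$, $\chi\in C^1_c(\R^N)$ and $i=1,\dots,N$.

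Next I would record the continuity of $\vec f\mapsto\E_{\vec u|_V}(\vec f)$ for convergence in the $C^1$-norm. Since $\vec u\in W^{1,N-1}(\O,\R^N)$, each entry of $\cof D\vec u$ is a homogeneous polynomial of degree $N-1$ in the entries of $D\vec u$, so $|\cof D\vec u|\le C_N|D\vec u|^{N-1}\in L^1(V)$; combined with $\det D\vec u\in L^1(V)$ and the pointwise bounds $\|D_{\vec x}\vec f\|_{L^\infty}|\cof D\vec u|$ and $C_N\|D_{\vec y}\vec f\|_{L^\infty}|\det D\vec u|$ for the two summands of the integrand defining $\E_{\vec u|_V}(\vec f)$, this gives, for all $\vec f,\vec h\in C^1_c(V\times\R^N,\R^N)$,
\[
|\E_{\vec u|_V}(\vec f)-\E_{\vec u|_V}(\vec h)|\le C_N\,\|\vec f-\vec h\|_{C^1}\,\big(\|\cof D\vec u\|_{L^1(V)}+\|\det D\vec u\|_{L^1(V)}\big).
\]

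Finally I would invoke a separation-of-variables density theorem in $C^1$, such as \cite[Cor.\ 1.6.5]{Llavona86}. Given $\vec f\in C^1_c(V\times\R^N,\R^N)$ with support in a compact $K\subset V\times\R^N$, one first fixes boxes $K_1\Subset V$, $K_2\Subset\R^N$ with $K\subset K_1\times K_2$ together with a cut-off equal to $1$ on a neighbourhood of $K$, and then approximates each scalar component of $\vec f$, in the $C^1$-norm and with supports staying in $K_1\times K_2$, by finite sums $\sum_l\psi_l(\vec x)\chi_l(\vec y)$ with $\psi_l\in C^1_c(V)$, $\chi_l\in C^1_c(\R^N)$; reassembling the components produces $\vec f_k\in\mathcal S$ with $\vec f_k\to\vec f$ in $C^1$ and uniformly compact support. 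The previous two steps then give $\E_{\vec u|_V}(\vec f)=\lim_k\E_{\vec u|_V}(\vec f_k)=0$ for every $\vec f\in C^1_c(V\times\R^N,\R^N)$, and since $\vec f=\vec 0$ is admissible this yields $\E(\vec u|_V)=\sup\{\E_{\vec u|_V}(\vec f):\|\vec f\|_{L^\infty}\le1\}=0$, as claimed.

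The genuinely delicate point is the last step: one needs the approximation to hold in the \emph{full} $C^1$-norm (uniform convergence of the maps together with all their first-order partial derivatives — exactly what the continuity estimate consumes) and to be able to keep the $\vec x$-supports of the approximants inside $V$, so that the corresponding $\phi$'s remain admissible test functions. Both are standard consequences of the cited density result once one localises with a cut-off, so, modulo that citation, Steps 1 and 2 reduce the lemma to a direct computation and a routine $L^1$ bound.
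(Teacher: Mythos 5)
Your proof is correct and follows essentially the same route the paper sketches: identify $\overline\E_{\vec u}(\phi,\vec g)$ with $\E_{\vec u}$ evaluated at the separated-variables test map $\vec f(\vec x,\vec y)=\phi(\vec x)\vec g(\vec y)$, note the $C^1$-continuity of $\vec f\mapsto\E_{\vec u}(\vec f)$ coming from $\cof D\vec u,\det D\vec u\in L^1$, and conclude by the density of the span of such products in $C^1_c$ (the same citation \cite[Cor.\ 1.6.5]{Llavona86}). You have simply written out the details that the paper leaves implicit.
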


In the rest of this section we take \(N=3\). The following result is a particular case of the calculation of the energy $\E$ for the Cartesian product of two functions.

\begin{lemma}\label{le:Evid}
Let $\omega \subset \R^2$ and $I \subset \R$ be both open and bounded.
Let $\vec v \in H^1 (\omega, \R^2)$.
Then $\det D \vec v \in L^1 (\omega)$ and $\E (\vec v) = 0$.
Define $\vec w : \omega \times I \to \R^3$ as $\vec w (x_1, x_2, x_3) = (\vec v (x_1, x_2), x_3)$.
Then $\E (\vec w) = 0$.
\end{lemma}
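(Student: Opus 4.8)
\emph{Strategy.} The plan is to deduce both vanishing claims from the vanishing of the auxiliary functional $\overline{\E}$, invoking Lemma~\ref{lem:link_Ebar_E} to upgrade $\overline{\E}=0$ to $\E=0$. The integrability statements are immediate: for $\vec v\in H^1(\omega,\R^2)$ the entries $\partial_i v_\alpha$ lie in $L^2(\omega)$, so $\det D\vec v=\partial_1 v_1\,\partial_2 v_2-\partial_2 v_1\,\partial_1 v_2\in L^1(\omega)$ by Cauchy--Schwarz; and from the block form of $D\vec w$ one reads off $\det D\vec w=\det D\vec v$ together with
\[
\cof D\vec w=\begin{pmatrix}\cof D\vec v & 0\\ 0 & \det D\vec v\end{pmatrix},
\]
whose entries are, up to sign, entries of $D\vec v\in L^2$ and $\det D\vec v\in L^1$, so $\cof D\vec w,\det D\vec w\in L^1$. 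All test maps in the definitions of $\overline{\E}$ and $\E$ being compactly supported, I may replace $\omega$ by a bounded open set containing the relevant support, and thus assume $\omega$ bounded; then $\vec v\in W^{1,1}$, $\vec w\in W^{1,2}$, Lemma~\ref{lem:link_Ebar_E} is applicable, and it suffices to prove $\overline{\E}_{\vec v}(\phi,\vec g)=0$ for all $\phi\in C^1_c(\omega)$, $\vec g\in C^1_c(\R^2,\R^2)$, and likewise $\overline{\E}_{\vec w}(\phi,\vec g)=0$ for all $\phi\in C^1_c(\omega\times I)$, $\vec g\in C^1_c(\R^3,\R^3)$.

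\emph{The smooth case.} For $\vec v$ of class $C^\infty$, the Piola identity $\sum_i\partial_i(\adj D\vec v)_{i\alpha}=0$ together with $D\vec v\,\adj D\vec v=(\det D\vec v)\,\vec{I}$ gives
\[
\dive\!\big(\phi\,(\adj D\vec v)\,(\vec g\circ\vec v)\big)=\vec g(\vec v)\cdot\big((\cof D\vec v)\,D\phi\big)+\phi\,(\dive\vec g)(\vec v)\,\det D\vec v ,
\]
and, the vector field on the left being compactly supported, integrating over the domain yields $\overline{\E}_{\vec v}(\phi,\vec g)=0$. Exactly the same computation for the ($C^\infty$) map $\vec w$ gives $\overline{\E}_{\vec w}(\phi,\vec g)=0$.

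\emph{Approximation and conclusion.} Choose $\vec v_j\in C^\infty$ converging to $\vec v$ in $H^1$ on a neighbourhood of $\supp\phi$ (mollification), and set $\vec w_j(x_1,x_2,x_3)=(\vec v_j(x_1,x_2),x_3)$. Since the cofactor of a $2\times2$ matrix is linear in its entries, $\cof D\vec v_j\to\cof D\vec v$ in $L^2$; continuity of the product map $L^2\times L^2\to L^1$ gives $\det D\vec v_j\to\det D\vec v$ in $L^1$; and since $\vec v_j\to\vec v$ in $L^2$, hence in measure, with $\vec g,D\vec g$ bounded, $\vec g\circ\vec v_j\to\vec g\circ\vec v$ and $(\dive\vec g)\circ\vec v_j\to(\dive\vec g)\circ\vec v$ boundedly and a.e. Passing to the limit in $\overline{\E}_{\vec v_j}(\phi,\vec g)=0$ is then routine: the $\cof D\vec v_j$ term pairs an $L^2$-convergent factor with a bounded, a.e.-convergent one, and the $\det D\vec v_j$ term pairs an $L^1$-convergent factor with an $L^\infty$-bounded, a.e.-convergent one, so the integrands converge in $L^1$ in both cases; the identical bookkeeping works for $\vec w_j$, the only entry of $\cof D\vec w_j$ that converges merely in $L^1$ being $\det D\vec v_j$, which there too always multiplies one of the bounded smooth factors. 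Hence $\overline{\E}_{\vec v}(\phi,\vec g)=0$ and $\overline{\E}_{\vec w}(\phi,\vec g)=0$, and Lemma~\ref{lem:link_Ebar_E} yields $\E(\vec v)=0$ and $\E(\vec w)=0$. The only delicate point in this scheme is precisely that $\det D\vec v_j$ converges only in $L^1$, not in $L^2$, so it must be kept paired with the smooth bounded factors throughout; apart from that the argument is mechanical, and the statement can alternatively be obtained as a special case of the general computation of $\E$ on a Cartesian product of two maps.
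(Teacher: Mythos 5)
Your proof is correct. The overall scheme — establish $\overline{\E}_{\vec u}(\phi,\vec g)=0$ for all test pairs, then invoke Lemma~\ref{lem:link_Ebar_E} — and the treatment of the $2D$ map $\vec v$ match the paper exactly (the paper also mollifies, observes $\cof D\vec v_n\to\cof D\vec v$ in $L^2$ and $\det D\vec v_n\to\det D\vec v$ in $L^1$, and passes to the limit in the integrated Piola identity). Where you diverge is in the treatment of $\vec w$: you approximate $\vec w$ directly by the smooth maps $\vec w_j=(\vec v_j,\id)$ and run the same limit passage a second time in $3D$, carefully noting that the only $L^1$-convergent cofactor entry, $\det D\vec v_j$, is always paired with a uniformly bounded, a.e.-convergent smooth factor. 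The paper instead exploits the product structure algebraically: it restricts to test data of the separated form $\phi(\vec x)=\phi_1(\hat{\vec x})\phi_3(x_3)$, $\vec g(\vec y)=(\vec g_{11}(\hat{\vec y})g_{13}(y_3),\,g_{31}(\hat{\vec y})g_{33}(y_3))$, computes $D\vec w$, $\cof D\vec w$, $\det D\vec w$ in block form, and collapses $\overline{\E}_{\vec w}(\phi,\vec g)$ into a sum of products in which the already-known facts $\E(\vec v)=0$ and $\E(\id_I)=0$ kill each factor; the density of separated-variable test data, built into Lemma~\ref{lem:link_Ebar_E}, then finishes the job. Both routes are valid: yours is more uniform and requires no extra idea beyond the $2D$ case, while the paper's isolates a reusable factorization of $\E$ over Cartesian products — exactly the ``general computation'' you mention as an alternative at the end of your note.
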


\begin{proof}
Clearly, \( \left| \det D \vec v \right |\leq \frac12 |D \vec v|^2 \in L^1(\omega)\).
The fact that \(\mathcal{E}(\vec v)=0\) is standard and can be shown by approximation by smooth maps and integration by parts (e.g., \cite[Lemma 2]{Muller88} or \cite[Th.\ 4.2]{MuQiYa94}).
For $\vec x \in \R^3$, write 
$\vec x = (\hat{\vec x}, x_3)$ with $\hat{\vec x} = (x_1, x_2)$, and analogously for $\vec y \in \R^3$.
 Using Lemma \ref{lem:link_Ebar_E}, it suffices to show that $\bar{\E}_{\vec w} (\phi, \vec g) = 0$ for $\phi \in C^1_c (\omega \times I)$ 
 and $\vec g \in C^1_c (\R^3, \R^3)$ of the form
\[
 \phi(\vec x) = \phi_1 (\hat{\vec x}) \, \phi_3 (x_3) \quad \text{and} \quad \vec g (\vec y) = \binom{\vec g_{11} (\hat{\vec y}) \, g_{13} (y_3)}{g_{31} (\hat{\vec y}) \, g_{33} (y_3)},
\]
for some
\[
 \phi_1 \in C^1_c (\omega), \ \phi_3 \in C^1_c (I), \ \vec g_{11} \in C^1_c (\R^2, \R^2), \ g_{13} \in C^1_c (\R) , \ g_{31} \in C^1_c (\R^2) , \ g_{33} \in C^1_c (\R) .
\]
For such $\phi$ and $\vec g$ we have
\[
 D \phi (\vec x) = \binom{\phi_3 (x_3) \, D \phi_1 (\hat{\vec x})}{\phi_1 (\hat{\vec x}) \, \phi'_3 (x_3)} , \qquad \div \vec g (\vec y) = g_{13} (y_3) \div \vec g_{11} (\hat{\vec y}) + g_{31} (\hat{\vec y}) \, g'_{33} (y_3) .
\]
On the other hand, for a.e.\ $\vec x \in \omega \times I$
\[
 D \vec w (\vec x) = \begin{pmatrix}
 D \vec v (\hat{\vec x}) & \vec 0 \\
 \vec 0 & 1
 \end{pmatrix} , \ 
  \cof D \vec w (\vec x) = \begin{pmatrix}
 \cof D \vec v (\hat{\vec x}) & \vec 0 \\
 \vec 0 & \det D \vec v (\hat{\vec x})
 \end{pmatrix} , \ 
 \det D \vec w (\vec x) = \det D \vec v (\hat{\vec x}) .
\]
Therefore, for a.e.\ $\vec x \in \omega \times I$,
\begin{equation}\label{eq:integrandE}
\begin{split}
 & \vec g( \vec w(\vec x))\cdot \left( \cof D \vec w(\vec x) D\phi(\vec x) \right) + \phi( \vec x)\div \vec g ( \vec w(\vec x)) \det D \vec w(\vec x) \\
 & = \phi_3 (x_3) \, g_{13} (x_3) \left[ \vec g_{11} (\vec v (\hat{\vec x})) 
 	\cdot
 \Big (\cof D \vec v (\hat{\vec x}) \, D \phi_1 (\hat{\vec x})\Big )
  + \phi_1 (\hat{\vec x}) \div \vec g_{11} (\vec v (\hat{\vec x})) \det D \vec v (\hat{\vec x}) \right] \\
 & \quad + g_{31} (\vec v (\hat{\vec x})) \, \phi_1 (\hat{\vec x}) \det D \vec v (\hat{\vec x}) \left[ g_{33} (x_3) \, \phi'_3 (x_3) + \phi_3 (x_3) \, g_{33}' (x_3) \right] .
\end{split}
\end{equation}
Now, since $\E (\vec v) = 0$ and $\E (\id_I) = 0$ we have that
\[
 \int_{\omega} \left[ \vec g_{11} (\vec v (\hat{\vec x}))
 \cdot
 \Big ( \cof D \vec v (\hat{\vec x})\, D \phi_1 (\hat{\vec x}) \Big )
  + \phi_1 (\hat{\vec x}) \div \vec g_{11} (\vec v (\hat{\vec x})) \det D \vec v (\hat{\vec x}) \right] \dd \hat{\vec x} = 0
\]
and
\[
 \int_I \left[ g_{33} (x_3) \, \phi'_3 (x_3) + \phi_3 (x_3) \, g_{33}' (x_3) \right] \dd x_3 = 0
\]
so an integration of \eqref{eq:integrandE} in $\omega \times I$ yields $\E_{\vec w} (\phi, \vec g) = 0$.
\end{proof}

In the following, we show that the precomposition of a map of zero energy with a regular map has also  zero energy; 
a related result was shown in the proof of \cite[Th.\ 7]{HeMo11}.

\begin{lemma}\label{le:ELu}
Let $\vec u\in H^1(\O,\R^3) \cap L^{\infty} (\O,\R^3)$ be such that $\det D \vec u \in L^1(\O)$ and $\E(\vec u) = 0$.
Let $\vec L : \R^3 \to \R^3$ be locally Lipschitz.
Then $\E (\vec L \circ \vec u) = 0$.
\end{lemma}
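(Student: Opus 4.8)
The plan is to reduce, by mollification, to the case where $\vec L$ is $C^\infty$, to establish the claim there by an explicit computation built on the multiplicativity of $\cof$ and $\det$ together with the Piola (null-Lagrangian) identity, and then to pass to the limit. Since $\vec u\in L^\infty(\O,\R^3)$, only the values of $\vec L$ on $\overline{B(\vec 0,R)}$ with $R:=\|\vec u\|_{L^\infty}$ influence $\vec L\circ\vec u$; replacing $\vec L$ by a globally Lipschitz map agreeing with it there, we may assume $\vec L$ globally Lipschitz. Let $\vec L_\eps:=\vec L*\rho_\eps\in C^\infty(\R^3,\R^3)$ be standard mollifications, so that $\vec L_\eps\to\vec L$ uniformly on $\R^3$, $\|D\vec L_\eps\|_{L^\infty}\le\mathrm{Lip}(\vec L)$ for every $\eps$, and $D\vec L_\eps\to D\vec L$ at every Lebesgue point of $D\vec L$, hence $\mathcal L^3$-a.e.\ in $\R^3$. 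Put $\vec w_\eps:=\vec L_\eps\circ\vec u$ and $\vec w:=\vec L\circ\vec u$; both lie in $H^1(\O,\R^3)\cap L^\infty$ since $\vec L,\vec L_\eps$ are Lipschitz and $\vec u\in H^1\cap L^\infty$.

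For fixed $\eps$, since $\vec u$ satisfies Lusin's condition (N$^{-1}$) (Lemma~\ref{le:Lusin}) the chain rule holds, $D\vec w_\eps=(D\vec L_\eps\circ\vec u)\,D\vec u$ a.e., whence $\cof D\vec w_\eps=\big((\cof D\vec L_\eps)\circ\vec u\big)\cof D\vec u$ and $\det D\vec w_\eps=\big((\det D\vec L_\eps)\circ\vec u\big)\det D\vec u$; by Lemma~\ref{eq:ineq-energy-area} these are bounded in $L^1(\O)$, uniformly in $\eps$, by $C|D\vec u|^2$ and $C|\det D\vec u|$ respectively, so $\overline{\E}_{\vec w_\eps}$ is well defined. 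Fix $\phi\in C^1_c(\O)$ and $\vec g\in C^1_c(\R^3,\R^3)$. Using $\cof(AB)=(\cof A)(\cof B)$, the identity $\vec a\cdot(M\vec b)=(M^{\mathsf T}\vec a)\cdot\vec b$, and $(\cof A)^{\mathsf T}=\adj A$, the first integrand of $\overline{\E}_{\vec w_\eps}(\phi,\vec g)$ equals $\tilde{\vec g}_\eps(\vec u)\cdot(\cof D\vec u\,D\phi)$, where $\tilde{\vec g}_\eps:=(\adj D\vec L_\eps)\,(\vec g\circ\vec L_\eps)\in C^1(\R^3,\R^3)$ is bounded with bounded derivative; and the Piola identity $\div\tilde{\vec g}_\eps=(\det D\vec L_\eps)\,\big((\div\vec g)\circ\vec L_\eps\big)$ (legitimate since $\vec L_\eps\in C^2$) turns the second integrand into $\phi\,(\div\tilde{\vec g}_\eps)(\vec u)\det D\vec u$. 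Multiplying $\tilde{\vec g}_\eps$ by a cutoff $\chi\in C^1_c(\R^3)$ equal to $1$ on a neighbourhood of $\overline{B(\vec 0,R)}$ changes neither $\tilde{\vec g}_\eps\circ\vec u$ nor $(\div\tilde{\vec g}_\eps)\circ\vec u$, so $\overline{\E}_{\vec w_\eps}(\phi,\vec g)=\overline{\E}_{\vec u}\big(\phi,\chi\tilde{\vec g}_\eps\big)$, which vanishes because $\E(\vec u)=0$ (recall that $\overline{\E}_{\vec u}(\phi,\vec h)$ is $\E_{\vec u}$ evaluated at the product test function $(\vec x,\vec y)\mapsto\phi(\vec x)\vec h(\vec y)\in C^1_c(\O\times\R^3,\R^3)$). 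Hence $\overline{\E}(\vec w_\eps)=0$, and Lemma~\ref{lem:link_Ebar_E} yields $\E(\vec w_\eps)=0$.

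Next I would let $\eps\to0$. From $\vec L_\eps\to\vec L$ uniformly, $\vec w_\eps\to\vec w$ uniformly on $\O$, so $\vec g(\vec w_\eps)\to\vec g(\vec w)$ and $(\div\vec g)(\vec w_\eps)\to(\div\vec g)(\vec w)$ in $L^\infty(\O)$. From $D\vec L_\eps\to D\vec L$ $\mathcal L^3$-a.e.\ and condition (N$^{-1}$) for $\vec u$, $(\cof D\vec L_\eps)\circ\vec u\to(\cof D\vec L)\circ\vec u$ and $(\det D\vec L_\eps)\circ\vec u\to(\det D\vec L)\circ\vec u$ a.e.\ in $\O$, uniformly bounded; multiplying by $\cof D\vec u$, $\det D\vec u$ and invoking dominated convergence (with the fixed $L^1$ dominants above) gives $\cof D\vec w_\eps\to\cof D\vec w$ and $\det D\vec w_\eps\to\det D\vec w$ in $L^1(\O)$. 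Passing to the limit in $\overline{\E}_{\vec w_\eps}(\phi,\vec g)=0$ gives $\overline{\E}_{\vec w}(\phi,\vec g)=0$ for all admissible $\phi,\vec g$; thus $\overline{\E}(\vec w)=0$, and a last application of Lemma~\ref{lem:link_Ebar_E} (note that $\vec w\in H^1\cap L^\infty$ with $\cof D\vec w,\det D\vec w\in L^1$, by the limiting identities) gives $\E(\vec w)=0$.

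The step I expect to be delicate is the use of the chain rule and of the a.e.\ convergences $(\cof D\vec L_\eps)\circ\vec u\to(\cof D\vec L)\circ\vec u$ and $(\det D\vec L_\eps)\circ\vec u\to(\det D\vec L)\circ\vec u$: since $D\vec L$ and $D\vec L_\eps$ are only defined $\mathcal L^3$-a.e., both require that $\vec u$ pull Lebesgue-null sets back to Lebesgue-null sets, i.e.\ Lusin's (N$^{-1}$) property, which is precisely where the nonvanishing of $\det D\vec u$ a.e.\ enters through Lemma~\ref{le:Lusin}. Everything else is the algebra of minors and the Piola identity --- so that only $C^2$ regularity of $\vec L_\eps$ is used --- together with dominated-convergence bookkeeping made possible by the uniform $L^1$ control, via Lemma~\ref{eq:ineq-energy-area}, of $\cof D\vec w_\eps$ and $\det D\vec w_\eps$ by $|D\vec u|^2$ and $|\det D\vec u|$.
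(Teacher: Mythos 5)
Your proof is correct and follows essentially the same route as the paper's: establish the key identity $\overline{\E}_{\vec L_\eps\circ\vec u}(\phi,\vec g)=\overline{\E}_{\vec u}(\phi,(\adj D\vec L_\eps)(\vec g\circ\vec L_\eps))$ for smooth $\vec L_\eps$ via the multiplicativity of $\cof$ and the Piola identity, conclude that this vanishes since $\E(\vec u)=0$, and then pass to the limit. The paper cites the existence of a $C^2$ approximating sequence with uniform $W^{1,\infty}$ bound rather than mollifying explicitly, and does not insert a cutoff, but these are cosmetic differences. One remark worth making: you correctly identify that the a.e.\ convergence $(\cof D\vec L_\eps)\circ\vec u\to(\cof D\vec L)\circ\vec u$ requires Lusin's condition (N$^{-1}$), which in turn requires $\det D\vec u\neq 0$ a.e.\ via Lemma~\ref{le:Lusin}; note, however, that the lemma as stated assumes only $\det D\vec u\in L^1$, not its a.e.\ nonvanishing, so strictly speaking that hypothesis needs to be added (or inherited from the context in which the lemma is applied, where $\det D\vec u>0$ a.e.\ always holds). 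The paper's own passage to the limit glosses over this point, so your flagging it as the delicate step is the right instinct.
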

\begin{proof}
First assume that $\vec L$ is of class $C^2$.
Let $\phi \in C^1_c (\O)$ and $\vec g \in C^1_c (\R^3, \R^3)$.
The key of the proof consists in showing that
\begin{equation}\label{eq:ELu}
 \bar{\E}_{\vec L \circ \vec u} (\phi, \vec g) = \bar{\E}_{\vec u} (\phi, (\adj D \vec L) (\vec g \circ \vec L)) .
\end{equation}
The integrand corresponding to $\bar{\E}_{\vec L \circ \vec u} (\phi, \vec g)$ is, for a.e.\ $\vec x \in \O$,
\begin{equation}\label{eq:ELufg}
\begin{split}
 & \vec g ( (\vec L \circ \vec u) (\vec x)) \cdot \left( \cof D (\vec L \circ \vec u) (\vec x) \, D\phi(\vec x) \right) + \phi( \vec x) \div \vec g ( (\vec L \circ \vec u)(\vec x)) \det D (\vec L \circ \vec u) (\vec x) \\
 & = \left( \adj D \vec L (\vec u (\vec x)) \, \vec g ( \vec L (\vec u (\vec x))) \right) \cdot \left( \cof D \vec u (\vec x) )\, D\phi(\vec x) \right) \\
 & \quad + \phi( \vec x) \div \vec g ( \vec L (\vec u (\vec x))) \det D \vec L (\vec u (\vec x)) \det D \vec u(\vec x) .
\end{split}
\end{equation}
Now define $\bar{\vec g} := (\adj D \vec L) (\vec g \circ \vec L)$.
Then, for all $\vec y \in \R^3$, by Piola's identity,
\[
 \div \bar{\vec g} (\vec y) = \sum_{i, j=1}^3 \frac{\p}{\p y_i} \left[ \adj D \vec L (\vec y)_{ij} \, g_j (\vec L (\vec y)) \right] = \sum_{i, j=1}^3 \adj D \vec L (\vec y)_{ij} \, \frac{\p}{\p y_i} \left[ g_j (\vec L (\vec y)) \right] ,
\]
so, thanks to the matrix identity $\vec F \adj \vec F = (\det \vec F) \vec I$ valid for $\vec F \in \R^{3 \times 3}$,
\begin{align*}
 \div \bar{\vec g} (\vec y) & = \sum_{i, j=1}^3 \adj D \vec L (\vec y)_{ij} \, \frac{\p}{\p y_i} \left[ g_j (\vec L (\vec y)) \right] = \sum_{i, j, k=1}^3 \adj D \vec L (\vec y)_{ij} \, D \vec g (\vec L (\vec y))_{jk} \, D \vec L (\vec y)_{ki} \\
 & = \tr \left( D \vec L (\vec y) \, \adj D \vec L (\vec y) \, D \vec g (\vec L (\vec y)) \right) = \tr \left( \det D \vec L (\vec y) \, D \vec g (\vec L (\vec y)) \right) \\
 & = \det D \vec L (\vec y) \div \vec g (\vec L (\vec y)) .
\end{align*}
With this, we find that the integrand of $\bar{\E}_{\vec u} (\phi, \bar{\vec g})$ coincides with \eqref{eq:ELufg}, so \eqref{eq:ELu} is proved.
As $\E (\vec u) = 0$ we obtain that $\bar{\E}_{\vec L \circ \vec u} (\phi, \vec g) = 0$.

Now assume, as in the statement, that $\vec L$ is only locally Lipschitz, and take a sequence $\{ \vec L_n \}$ in $C^2 (\R^3, \R^3)$ such that $\vec L_n \to \vec L$ a.e., $D \vec L_n \to D \vec L$ a.e.\ and
\[
 \sup_{n \in \N} \left\| \vec L_n \right\|_{W^{1,\infty} (B(\vec 0, \|\vec u\|_{L^{\infty} (\O, \R^3)}))} < \infty .
\]
The existence of such approximating sequence follows from a classic result (see, e.g., \cite[Th.\ 6.6.1]{EvGa92}).
By the first part of the proof,
\begin{align*}
 0 = \int_{\O} \bigl[ & \left( \adj D \vec L_n (\vec u (\vec x)) \, \vec g ( \vec L_n (\vec u (\vec x))) \right) \cdot \left( \cof D (\vec u (\vec x) \, D\phi(\vec x) \right) \\
 & + \phi( \vec x) \div \vec g ( \vec L_n (\vec u (\vec x))) \det D \vec L_n (\vec u (\vec x)) \det D \vec u(\vec x) \bigr] \dd \vec x .
\end{align*}
Taking limits, we obtain that $\bar{\E}_{\vec L \circ \vec u} (\phi, \vec g) = 0$.
By Lemma \ref{lem:link_Ebar_E}, $\E (\vec L \circ \vec u) = 0$.
\end{proof}

We recall now the definition of the distributional Jacobian determinant.
\begin{definition}
Let \(\vec u\in H^1 (\Om,\R^3) \cap L^{\infty}(\Om,\R^3)\).
The distributional Jacobian $\Det D \vec u$ of \(\vec u\) is the distribution defined by
\[
\langle \Det D \vec u, \varphi \rangle := -\frac{1}{3}\langle \adj D \vec u \, \vec u, D \varphi \rangle = - \frac{1}{3}\int_\Om \adj D \vec u\, \vec u \cdot D \varphi , \qquad \varphi \in C^1_c(\Om) .
\]
\end{definition}

\subsection{The axisymmetric setting}\label{subse:axisymmetric}
In most of the paper we will work with axisymmetric (with respect to the $x_3$-axis) maps and domains, which are defined as follows. 
We say that the set $\Om \subset \R^3$ is axisymmetric if
\[
 \bigcup_{\vec x \in \O} \left( \p B_{\R^2} ((0,0), |(x_1, x_2)|) \times \{ x_3 \} \right) \subset \O .
\]
When we define
\begin{align*}
 \pi : \R^3 & \to [0, \infty) \times \R & \vec P : [0, \infty) \times \R \times \R &\to \R^3 \nonumber \\ 
 \vec x & \mapsto \left( |(x_1, x_2)|, x_3 \right) & (r, \t, x_3) &\mapsto (r \cos \t, r \sin \t, x_3) ,
\end{align*}
the axisymmetry of $\O$ is equivalent to the equality
\begin{equation}\label{eq:axi}
 \Om = \left\{ \vec P (r, \t, x_3) : \, (r,x_3) \in \pi(\Om), \, \t\in[0,2\pi) \right\} .
\end{equation}

Given an axisymmetric set $\O$, we say that $\vec u:\Om\to\R^3$ is axisymmetric if there exists $\vec v : \pi(\Om)\to [0, \infty) \times \R$ such that
\begin{multline}\label{eq:uv}
 (\vec u \circ \vec P) (r,\t,x_3) = \vec P \left( v_1 (r,x_3), \t , v_2(r,x_3) \right), \text{ i.e.}, \\
  \vec u(r\cos \theta,r\sin \theta,x_3)=v_1(r,x_3)(\cos \theta \vec e_1+\sin \theta \vec e_2) +v_2(r,x_3) \vec e_3
\end{multline}
for all $(r, x_3,\t) \in \pi(\Om)\times[0,2\pi)$.
We will say that $\vec v$ is the function corresponding to $\vec u$. This $\vec v$ is uniquely determined by $\vec u$. 
Note that if $\O$ and $\vec u$ are axisymmetric then so is $\vec u (\O)$.

Given $\d>0$, we define \(C_\delta\) as the (open, infinite, solid) cylinder of radius $\d$:
\begin{equation}\label{def:Cdelta}
C_\delta := \left\{ \vec P (r, \t, x_3) : (r, \t, x_3) \in [0,\delta)\times [0, 2\pi) \times \R \right\}.
\end{equation}
We have $\pi(C_\delta)=[0,\delta)\times \R$.

\subsection{Prescribing the boundary data}
	\label{se:bdryData}

As mentioned in the Introduction, we fix a smooth bounded open set $\widetilde{\Om}$ of $\R^3$ such that 
$\Om \Subset \widetilde{\Om}$ and consider an orientation-preserving $C^1$ diffeomorphism $\vec b$ from 
the closure of $\widetilde{\Omega}$ to $\R^3$.
We assume that $\Omega$, $\widetilde \Omega$, and $\vec b$ are axisymmetric.
The function $\vec u$, originally defined in $\O$, is extended to $\widetilde{\Om}$ by setting $\vec u =\vec b$ in $\Omega_D:= \widetilde \Omega \setminus \overline \Omega$.

We assume that the extension to $\widetilde \Omega$, still called $\vec u$, is in $H^1 (\widetilde{\Om}, \R^3)$, 
as stated in the definition \eqref{eq:defAs_introduction} of our function space $\mathcal A_s$.
Regarding the class $\mathcal A_s^r$ of regular maps defined in \eqref{eq:defAsr}, its definition is the set the maps in $\mathcal A_s$ having zero surface energy in the extended domain $\widetilde \Omega$:
\begin{equation*}
\Asr = \{\vec u \in \As: \mathcal E(\vec u)=0 \ \text{in}\ \widetilde \Omega\}.
\end{equation*}
This way we avoid cavitation at the boundary \(\p \O\).
Observe that the condition $\det D\vec u>0$ a.e.\ in  definition \eqref{eq:defAs_introduction} 
is satisfied for any $\vec u \in H^1(\widetilde \Omega, \R^3)$ such that $E(\vec u)\leq E(\vec b)$,
thanks to the blow-up behaviour \eqref{eq:explosiveH} of the neo-Hookean energy as the Jacobian vanishes.

\subsection{A family of good open sets}

Given a nonempty open set \(U \Subset\widetilde{\O}\) with a \(C^2\) boundary, we denote by \(d: \widetilde{\O} \rightarrow \R\) 
the signed distance function to \(\p U\) given by
\begin{equation*} 
d(\vec x):= \begin{cases}
\dist(\vec x, \p U) & \text{ if } \vec x \in U \\
0 & \text{ if } \vec x \in \p U \\
-\dist(\vec x,\p U) & \text{ if } \vec x \in \O \setminus \overline{U}
\end{cases}
\end{equation*}
and
\begin{equation}
	\label{eq:def_distance_level_sets}
U_t:= \{ \vec x \in \widetilde{\O} : \ d(\vec x) >t \},
\end{equation}
for each \(t \in \R\). It is a classical result (see e.g.\ \cite{Dieudonne72}) that there exists \( \delta>0\) such that for 
all \( t \in (-\delta,\delta)\), the set \(U_t\) is open, compactly contained in \(\widetilde{\O}\) and has a \(C^2\) boundary.

\begin{definition}\label{def:good_open_sets}
Let \(\vec u \in H^1(\widetilde{\O},\R^3)\) be such that $\det D\vec u>0$ a.e. 
We define \( \mathcal{U}_{\vec u} \) as the family of nonempty open sets \(U \Subset \widetilde{\O}\) with a \(C^2\) boundary that satisfy
\begin{enumerate}[a)]
\item \( \vec u_{| \p U} \in H^1(\p U,\R^3)\), and \( (\cof \nabla \vec u)_{|\p U}\in L^1(U,\R^{3\times 3})\),
\item \(\p U \subset \O_0\), \(\mathcal{H}^2\)-a.e., where \(\O_0\) is the set in Definition \ref{def:Om0}, and \( (\nabla \vec u_{|\p U})(\vec x)= \nabla \vec u(\vec x)_{| T_{\vec x} \p U}\) for \( \mathcal{H}^2\)-a.e.\ \(\vec x\in \p U\),
\item \( \lim_{ \e \to 0} \int_0^\e \left| \int_{\p U_t} |\cof \nabla \vec u| \dd \mathcal{H}^2-\int_{\p U} |\cof \nabla\vec u| \dd \mathcal{H}^2 \right| \dd t=0,\)
\item For every \( \vec g \in C^1(\R^3,\R^3)\) with \( (\adj D\vec u)( \vec g \circ \vec u) \in L^1_{\text{loc}}(\widetilde{\O},\R^3)\),
\begin{align*}
\lim_{\e \rightarrow 0} \int_0^\e \Bigg| \int_{\p U_t} \vec g( \vec u (\vec x))\cdot 
&
	(\cof \nabla \vec u(\vec x) \vec \nu_t (\vec x)) \, \dd \mathcal{H}^2
\\  
	&
	-\int_{\p U} \vec g( \vec u (\vec x))\cdot (\cof \nabla \vec u(\vec x) \vec \nu (\vec x)) \, \dd \mathcal{H}^2 \Bigg| \, \dd t =0
\end{align*}
where \( \vec \nu_t\) denotes the unit outward normal to \(U_t\) for each \(t\in (0,\e)\), and \(\vec \nu\) the unit outward normal to \( U\).
\end{enumerate}
\end{definition}

Since we are imposing that $\vec u$ coincides with the $C^1$-diffeomorphism $\vec b$ in the exterior Dirichlet neighbourhood 
$\Omega_D$ of $\partial \Omega$, without loss of generality we may assume that $\O \in \mc{U}_{\vec u}$.

The following result is obtained using the coarea formula and Lebesgue's differentiation theorem for the first part, 
and Uryshon functions and Sard's lemma for the second. It guarantees that there are enough sets in $\mathcal{U}_{\vec u}$.

\begin{lemma}[Lemma 2.11 in \cite{HeMo15}]
		\label{le:enough_good_open_sets}
	Let $\vec u\in H^1(\widetilde \Omega, \R^3)$
	be such that $\det D\vec u>0$ a.e.
	Let 
	$U\Subset \widetilde \Omega$ be a
	nonempty open set with a $C^2$ boundary.
	Then
	$U_t\in \mathcal{U}_{\vec u}$ for a.e.\ $t\in (-\delta, \delta)$. 
	Moreover, for each compact $K\subset \widetilde \Omega$ there exists $U\in \mathcal{U}_{\vec u}$
	such that $K\subset U$.
\end{lemma}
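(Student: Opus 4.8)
The plan is to verify the four defining conditions (a)--(d) of $\mathcal{U}_{\vec u}$ in Definition \ref{def:good_open_sets} for almost every level set $U_t$, and then to construct, for each compact $K$, a single set in $\mathcal{U}_{\vec u}$ containing it. For the first assertion, I would work with the signed distance function $d$ to $\partial U$ and the family $\{U_t\}_{t\in(-\delta,\delta)}$ introduced in \eqref{def:signed_distance}--\eqref{eq:def_distance_level_sets}, which for $\delta$ small enough consists of open sets compactly contained in $\widetilde\Omega$ with $C^2$ boundary; since $|\nabla d|=1$ near $\partial U$, the coarea formula applies cleanly with the level sets $\partial U_t=\{d=t\}$. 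The strategy for conditions (c) and (d) is the standard one: write the quantities $t\mapsto \int_{\partial U_t}|\cof\nabla\vec u|\,\dd\mathcal{H}^2$ and $t\mapsto \int_{\partial U_t}\vec g(\vec u)\cdot(\cof\nabla\vec u\,\vec\nu_t)\,\dd\mathcal{H}^2$ as (locally) $L^1$ functions of $t$ by the coarea formula applied to $|\cof\nabla\vec u|$ and to the components of $(\adj D\vec u)(\vec g\circ\vec u)$ (finiteness uses $\vec u\in H^1$, hence $\cof\nabla\vec u\in L^1$ by Lemma \ref{eq:ineq-energy-area}, together with the local integrability hypothesis on $(\adj D\vec u)(\vec g\circ\vec u)$), and then invoke Lebesgue's differentiation theorem to conclude that a.e.\ $t$ is a Lebesgue point, which is exactly the vanishing-limit condition stated in (c) and (d). Conditions (a) and (b) are of a similar flavour: by Fubini/coarea the trace $\vec u|_{\partial U_t}$ lies in $H^1(\partial U_t)$ with $\nabla(\vec u|_{\partial U_t})=\nabla\vec u|_{T\partial U_t}$ for a.e.\ $t$, and the inclusion $\partial U_t\subset\Omega_0$ $\mathcal{H}^2$-a.e.\ holds for a.e.\ $t$ because $|\widetilde\Omega\setminus\Omega_0|=0$ and the slices of a null set are $\mathcal{H}^2$-null for a.e.\ $t$. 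One subtlety to handle in (d) is the quantifier over all $\vec g$: I would fix a countable dense family of test fields $\{\vec g_k\}$ in $C^1(\R^3,\R^3)$ (with the relevant local-integrability property), obtain a full-measure set of good $t$ for each $k$, intersect over $k$, and then pass to general $\vec g$ by approximation, using uniform bounds on $\cof\nabla\vec u$ on the relevant compact set.

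For the second assertion, given a compact $K\subset\widetilde\Omega$ I would pick a smooth bounded open set $V$ with $K\Subset V\Subset\widetilde\Omega$ (for instance a sublevel set of a mollified distance function, or via Urysohn's lemma combined with Sard's theorem to choose a regular value giving a $C^2$, in fact smooth, boundary), and then apply the first part of the lemma to $V$ in place of $U$: for a.e.\ $t\in(-\delta_V,\delta_V)$ one has $V_t\in\mathcal{U}_{\vec u}$, and choosing $t<0$ with $|t|$ small enough keeps $K\subset V\subset V_t\Subset\widetilde\Omega$. This produces the desired $U:=V_t\in\mathcal{U}_{\vec u}$ with $K\subset U$.

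The main obstacle I anticipate is the bookkeeping in condition (d): ensuring that a single full-measure set of parameters $t$ works simultaneously for \emph{every} admissible $\vec g$, rather than for a fixed one. The resolution—reduce to a countable dense subfamily and then pass to the limit using the $L^\infty$ control afforded by $\|\cof\nabla\vec u\|_{L^1}$ on compact subsets and the local uniform bounds on $\vec g$ and $\vec g\circ\vec u$—is routine but must be set up carefully so that the convergence of the boundary integrals over $\partial U_t$ and $\partial U$ is uniform in $t$. Everything else reduces to the coarea formula and the Lebesgue differentiation theorem applied to explicit $L^1_{\mathrm{loc}}$ functions of the single variable $t$, exactly as in \cite[Lemma 2.11]{HeMo15}.
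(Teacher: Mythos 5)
Your proposal matches the approach the paper indicates (it cites this as Lemma~2.11 in \cite{HeMo15} and summarizes the method in the sentence preceding the statement): coarea formula plus Lebesgue differentiation for the ``a.e.\ $t$'' part, and Urysohn functions with Sard's lemma to manufacture a $C^2$ open set containing $K$ for the ``moreover'' part, with the remaining technicality being the quantifier over $\vec g$ in condition (d), which you correctly resolve by reduction to a countable dense family. No gap; this is the intended argument.
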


\subsection{Regularity, injectivity and weak convergence of the planar function}

Let $C_\delta$ be as in \eqref{def:Cdelta}.
The link between the regularity of $\vec u$ and its associated \(2D\) map $\vec v$ is as follows.

\begin{lemma}\label{le:uvSobolev}
Let \(\Om\) be an axisymmetric domain, and $\vec u : \O \to \R^3$ an axisymmetric map with corresponding function $\vec v$.
Let $\d>0$.
Then, $\vec u \in H^1 (\Om \setminus \overline{C}_{\d} , \R^3)$ if and only if $\vec v \in H^1 (\pi(\Om) \setminus ([0, \d] \times \R ), \R^2)$.
Moreover, in this case,
\[
 \left\| \vec u \right\|_{H^1 (\Om \setminus \overline{C}_{\d} , \R^3)}^2 \leq 2 \pi \max \{ \|\vec x \|_{L^{\infty} (\O, \R^3)} , \d^{-1} \}  \left\| \vec v \right\|_{H^1 (\pi(\Om) \setminus ([0, \d] \times \R), \R^2)}^2 ,
\]
\[
 \left\| \vec v \right\|_{H^1 (\pi(\Om) \setminus ([0, \d] \times \R) , \R^2)}^2 \leq (2 \pi \d )^{-1} \left\| \vec u \right\|_{H^1 (\Om \setminus \bar{C}_{\d}, \R^3)}^2
\]
and 
for a.e.\ $(r, \t, x_3)$ with $(r, x_3) \in \pi(\Om) \setminus ([0, \d] \times \R )$ and $\t \in \R$,
\begin{equation*} 
 \det D\vec u (\vec P (r, \t, x_3))= \frac{v_1 (r, \t)}{r}\det D \vec v (r, \t) .
\end{equation*}
\end{lemma}

This can be proved by using that the change of variables from Cartesian to cylindrical coordinates is a diffeomorphism when restricted to 
suitable domains and by using the formula for the Dirichlet energy in cylindrical coordinates given in the Appendix.

The orientation-preserving and injectivity conditions of $\vec u$ and $\vec v$ are related as follows.
We recall that a function is injective a.e.\ if its restriction to a set of full measure is injective.

\begin{lemma}\label{le:uvinj}
Let \(\Om\) be an axisymmetric domain.
Let $ \vec u\in H^1 (\Om, \R^3)$ be axisymmetric, and let $\vec v$ be its corresponding function.
The following hold:
\begin{enumerate}[a)]
\item $\det D \vec u > 0$ a.e.\  in \(\O\) if and only if $\det D \vec v > 0$ a.e.\  in \( \pi(\O)\).

\item $\vec u$ is injective a.e.\  in \(\O\) if and only if $\vec v$ is injective a.e\@ in \( \pi(\O)\).
\end{enumerate}
\end{lemma}

The proof uses again the change of variables in cylindrical coordinates and is left to the reader.

The relationship between the weak convergences (hereafter denoted by $\weakc$) of a sequence of axisymmetric functions and their associated functions is contained 
in the two following lemmas, whose proofs are left again to the reader since they just rely on a manipulation of the axisymmetry in cylindrical coordinates.

\begin{lemma}\label{le:weakconv}
Let \(\Om\) be an axisymmetric domain.
For each $j \in \N$, let $\vec u_j \in H^1(\Om, \R^3)$ be axisymmetric.
Let $\vec u \in H^1 (\Om, \R^3)$, and assume that $\vec u_j \weakc \vec u$ in $H^1(\Om, \R^3)$ as $j \to \infty$.
The following statements hold:
\begin{enumerate}[i)]
\item $\vec u$ is axisymmetric.

\item\label{item:wcii} Let $\vec v_j$ and $\vec v$ be the corresponding functions of $\vec u_j$ and $\vec u$, respectively.
Then $\vec v_j \weakc \vec v$ in $H^1 (\pi(\Om) \setminus ([0, \d] \times \R ), \R^2)$ for each $\d >0$.
\end{enumerate}
\end{lemma}

\begin{lemma}
Let \(\Om\) be an axisymmetric domain.
For each $j \in \N$, let $\vec u_j : \O \to \R^3$ be an axisymmetric map with corresponding function $\vec v_j$.
Let $\vec v : \pi(\Om) \setminus (\{ 0 \} \times \R) \to \R^3$ and assume that $\vec v \in H^1 (\pi(\Om) \setminus ([0, \d] \times \R ), \R^3)$ and that $\vec v_j \weakc \vec v$ in $H^1(\pi(\Om) \setminus ([0, \d] \times \R ), \R^3)$ for every $\d>0$.
Define $\vec u : \O \setminus \R\vec{e}_3 \to \R^3$ by \eqref{eq:uv}.
Then $\vec u_j \weakc \vec u$ in $H^1 (\O \setminus \overline{C}_{\d})$ for every $\d>0$.
\end{lemma}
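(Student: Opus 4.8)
The plan is to run the proof of Lemma~\ref{le:weakconv} backwards. Fix $\d>0$. Since $\vec v_j\weakc\vec v$ in $H^1(\pi(\Om)\setminus([0,\d]\times\R))$, the sequence $\{\vec v_j\}$ is bounded there, so the second norm estimate of Lemma~\ref{le:uvSobolev} shows that $\{\vec u_j\}$ is bounded in $H^1(\Om\setminus\overline C_\d,\R^3)$. Consequently, along a subsequence (not relabelled) we have $\vec u_j\weakc\vec w$ in $H^1(\Om\setminus\overline C_\d,\R^3)$ for some $\vec w$ (which is then automatically in that space), and it remains only to identify $\vec w$ with the map $\vec u$ defined through \eqref{eq:uv}.

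I would do this by extracting a further subsequence. First, exhausting $\pi(\Om)\setminus(\{0\}\times\R)$ by the sets $\pi(\Om)\setminus([0,1/n]\times\R)$ and diagonalising, we may assume, by $L^2_{\mathrm{loc}}$ compactness of $H^1$-bounded sequences exactly as in the proof of Lemma~\ref{le:weakconv}, that $\vec v_j\to\vec v$ a.e.\ in $\pi(\Om)\setminus(\{0\}\times\R)$; by Rellich's theorem on $\Om\setminus\overline C_\d$ we may also assume $\vec u_j\to\vec w$ a.e.\ in $\Om\setminus\overline C_\d$. The a.e.\ convergence of $\vec v_j$ yields, as in Lemma~\ref{le:weakconv},
\[
 \vec P\big(v^1_j(r,x_3),\t,v^2_j(r,x_3)\big)\ \longrightarrow\ \vec P\big(v^1(r,x_3),\t,v^2(r,x_3)\big)
\]
for a.e.\ $(r,x_3,\t)\in\pi(\Om)\times\R$; since $(\vec u_j\circ\vec P)(r,\t,x_3)=\vec P(v^1_j(r,x_3),\t,v^2_j(r,x_3))$ for all such $(r,x_3,\t)$, and since $\vec P$ is a local diffeomorphism on $(0,\infty)\times\R\times\R$ while $\{0\}\times\R\times\R$ is Lebesgue null, this gives $\vec u_j\to\vec u$ a.e.\ in $\Om\setminus\R\vec e_3$, hence in $\Om\setminus\overline C_\d$. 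Comparing the two a.e.\ limits forces $\vec w=\vec u$ a.e.\ in $\Om\setminus\overline C_\d$.

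To pass from a subsequence to the whole sequence, I would argue as in Lemma~\ref{le:weakconv}: any subsequence of $\{\vec u_j\}$ is again bounded in $H^1(\Om\setminus\overline C_\d,\R^3)$ by the same estimate, and the corresponding subsequence of $\{\vec v_j\}$ still converges weakly to $\vec v$ in $H^1(\pi(\Om)\setminus([0,\d]\times\R))$, so by the above it admits a further subsequence converging weakly in $H^1(\Om\setminus\overline C_\d,\R^3)$ to the very same $\vec u$; since the limit is independent of the chosen subsequence, $\vec u_j\weakc\vec u$ in $H^1(\Om\setminus\overline C_\d,\R^3)$. Finally, as $\d>0$ was arbitrary, the conclusion holds for every $\d>0$.

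I do not expect any genuine obstacle here: the argument is, line by line, the converse of Lemma~\ref{le:weakconv}. The only points requiring (routine) care are the matching of the solid cylinders $\overline C_\d$ with the strips $[0,\d]\times\R$ through the two-sided norm bounds of Lemma~\ref{le:uvSobolev}, and the standard passages to a.e.-convergent subsequences together with the subsequence--limit uniqueness argument used to recover weak convergence of the full sequence.
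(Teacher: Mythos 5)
Your proof is correct and is precisely the argument the paper indicates by ``Arguing as in Lemma~\ref{le:weakconv}'': uniform $H^1$ bounds on $\vec u_j$ away from the axis via the norm comparison in Lemma~\ref{le:uvSobolev}, identification of the weak subsequential limit through a.e.\ convergence and formula \eqref{eq:uv}, and then upgrading to the whole sequence by uniqueness of the limit. Only a small slip: you invoke the ``second'' norm estimate of Lemma~\ref{le:uvSobolev}, but the inequality you actually need, bounding $\|\vec u\|_{H^1(\Om\setminus\overline C_\d)}$ in terms of $\|\vec v\|_{H^1(\pi(\Om)\setminus([0,\d]\times\R))}$, is the first of the two displayed there; the logic you carry out is the right one.
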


\subsection{Regularity of maps in \(\As\) outside the axis of symmetry}
We recall that orientation-preserving $H^1$ maps in $2D$ are  continuous and satisfy Lusin's condition $(N)$ and the divergence identities. These properties are inherited by the $3D$ axisymmetric maps $\vec u$ in $\As$ away from the symmetry axis.
\begin{lemma}\label{N condition}
Let \(\Om\) be an axisymmetric domain. Let $ \vec u\in H^1 (\Om, \R^3)$ be axisymmetric and satisfy $\det D \vec u > 0$ a.e., and let $\vec v$ be its corresponding function.
Then:
\begin{enumerate}[a)]
\item\label{item:Auv} $\vec v$ has a representative that is continuous at each point of $\pi(\Om) \setminus (\{ 0\} \times \R)$, 
differentiable a.e., and satisfies condition $(\text{N})$ in $\pi(\Om)\setminus (\{0\}\times \R)$.
Moreover, $\E (\vec v, \pi(\Om)\setminus (\{0\}\times \R)) = 0$.
\item $\vec u$ has a representative that is continuous at each point of $\Om\setminus \R \vec e_3$, 
differentiable a.e., and satisfies condition $(\text{N})$ in $\Om \setminus \R \vec e_3$.
Moreover, $\E (\vec u, \O \setminus \R\vec{e}_3) = 0$.
\item\label{item:wciii}  For each $j \in \N$, let $\vec u_j \in H^1(\Om, \R^3)$ be axisymmetric.
Let $\vec u \in H^1 (\Om, \R^3)$, and assume that $\vec u_j \weakc \vec u$ in $H^1(\Om, \R^3)$ as $j \to \infty$. 
If $\det D \vec u_j > 0$ a.e.\ for all $j \in \N$ then $\vec v_j \to \vec v$ uniformly in compact subsets of 
$\pi(\Om) \setminus (\{ 0 \} \times \R )$ and $\vec u_j \to \vec u$ uniformly in compact subsets of $\Om\setminus (\{ (0, 0) \} \times \R )$.
\end{enumerate}

\end{lemma}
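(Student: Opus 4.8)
\emph{Plan of proof.} Part \ref{item:Auv}) is, in essence, a translation of the classical regularity theory of sense-preserving planar Sobolev maps through the dictionary provided by Lemmas \ref{le:uvSobolev} and \ref{le:uvinj}. By Lemma \ref{le:uvSobolev}, $\vec v\in H^1\big(\pi(\Om)\setminus([0,\d]\times\R),\R^2\big)$ for every $\d>0$, so $\vec v$ is locally $H^1$ on the planar open set $\om:=\pi(\Om)\setminus(\{0\}\times\R)\subset(0,\infty)\times\R$; and by Lemma \ref{le:uvinj}\ref{item:AXIa}) together with \eqref{eq:radialdet} (which also forces $v_1>0$ a.e.) one has $\det D\vec v>0$ a.e.\ on $\om$. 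For such a map the three stated properties---the existence of a continuous (indeed open, discrete and sense-preserving) representative, its a.e.\ differentiability, and Lusin's condition $(\mathrm N)$---are exactly the planar regularity theory recalled above (see, e.g., \cite{Sverak88} and the references therein). That $\E(\vec v,\om)=0$ needs nothing of the sort: the surface energy is local in the reference variable, so it suffices to know $\E(\vec v,\pi(\Om)\setminus([0,\d]\times\R))=0$ for every $\d>0$, which is Lemma \ref{le:Evid}.

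For part b) the plan is to transport these three properties along the cylindrical chart $\vec P$, which on $(0,\infty)\times\R\times\R$ is a smooth local diffeomorphism of Jacobian $r>0$, by means of \eqref{eq:uv}: locally on $\Om\setminus\R\vec e_3$ one has $\vec u=\vec P\circ\vec V\circ\vec\Phi$, where $\vec\Phi(\vec x)=\big(|(x_1,x_2)|,\t(\vec x),x_3\big)$ is the inverse chart (smooth up to the $2\pi$-ambiguity of the polar angle $\t(\vec x)$) and $\vec V(r,\t,x_3)=(v_1(r,x_3),\t,v_2(r,x_3))$. Continuity of a representative of $\vec u$ on $\Om\setminus\R\vec e_3$ is then immediate, and a.e.\ differentiability follows from the chain rule wherever $\vec v$ is differentiable, the exceptional set being the rotation of an $\mathcal L^2$-null subset of $\om$, hence $\mathcal L^3$-null. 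For Lusin $(\mathrm N)$: if $A\subset\Om\setminus\R\vec e_3$ with $|A|=0$, then its $\vec P$-preimage $\tilde A$ is $\mathcal L^3$-null, so by Fubini the slice $\tilde A_\t$ is $\mathcal L^2$-null for a.e.\ $\t$; condition $(\mathrm N)$ for $\vec v$ makes $(v_1,v_2)(\tilde A_\t)$ $\mathcal L^2$-null, and the area formula for $\vec P$ (whose Jacobian is the radial variable) gives $|\vec u(A)|\le\int_0^{2\pi}\big(\int_{(v_1,v_2)(\tilde A_\t)}s\,\dd s\,\dd t\big)\,\dd\t=0$. Finally $\E(\vec u,\Om\setminus\R\vec e_3)=0$, i.e.\ the divergence identities \eqref{eq:divergence_identities}: being a defect of the type $\p(\text{graph})\neq 0$ in the reference configuration, this property is preserved under the smooth change of variables $\vec P$, so after a partition of unity restricting to angular intervals of length $<2\pi$ (on which $\vec P$ is a genuine diffeomorphism) one is reduced to $\E(\vec P\circ\vec V)=0$; since $\vec V$ is, up to a permutation of coordinates, the cylinder $\vec w(a,b,c)=(\vec v(a,b),c)$ of Lemma \ref{le:Evid}, this follows from Lemmas \ref{le:Evid}, \ref{le:ELu} and \ref{lem:link_Ebar_E}.

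For part c), Lemma \ref{le:weakconv}\ref{item:wcii}) gives $\vec v_j\weakc\vec v$ in $H^1(\pi(\Om)\setminus([0,\d]\times\R),\R^2)$ for every $\d>0$, hence a uniform local bound on the Dirichlet energies of the $\vec v_j$, while $\det D\vec v_j>0$ a.e.\ by Lemma \ref{le:uvinj}\ref{item:AXIa}). As each component of a sense-preserving planar $H^1$ map is monotone in the sense of Lebesgue, it obeys the borderline Morrey estimate controlling its oscillation on small balls by its local Dirichlet energy; hence $\{\vec v_j\}$ is locally equicontinuous on $\om$. Together with $\vec v_j\to\vec v$ in $L^2_{\loc}(\om)$ (Rellich), Arzel\`a--Ascoli then yields $\vec v_j\to\vec v$ locally uniformly on $\om$. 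Since a compact $K\subset\Om\setminus\R\vec e_3$ satisfies $\pi(K)\Subset\om$ and each of its points equals $\vec P(r,\t,x_3)$ with $(r,x_3)\in\pi(K)$ and $\t\in[0,2\pi)$, the uniform continuity of $\vec P$ on compacts transports this, via \eqref{eq:uv}, to $\vec u_j\to\vec u$ uniformly on $K$.

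The routine bookkeeping aside, I expect the genuinely delicate step to be the passage between $\vec v$ and $\vec u$ across the non-injective lift $\vec P$---for Lusin $(\mathrm N)$ and, above all, for the surface-energy identity---where one must stay away from the symmetry axis to keep the Jacobian $r$ of $\vec P$ bounded below and restrict to angular intervals of length $<2\pi$ so that $\vec P$ is a true diffeomorphism; everything else rests on the known planar theory and on the preparatory lemmas of Section \ref{sec:prel}.
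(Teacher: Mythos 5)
Your proposal follows essentially the same route as the paper: part a) via the classical regularity theory of sense-preserving planar Sobolev maps (the paper cites Reshetnyak, Vodop'yanov--Goldstein, Goldstein--Reshetnyak, Hencl--Koskela), part b) by transporting through the cylindrical chart $\vec P$ and using the chain of Lemmas \ref{le:Evid}, \ref{le:ELu}, \ref{lem:link_Ebar_E} for the surface energy, and part c) by equicontinuity plus Ascoli--Arzel\`a. Two points of comparison are worth recording.

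First, in part b), your reduction of $\E(\vec u)=0$ to $\E(\vec P\circ\vec V)=0$ quietly asserts that zero surface energy is preserved under \emph{pre}composition with the smooth chart $\vec\Phi=\vec P^{-1}$ (restricted to an angular sector). This is true, but it is not covered by Lemma \ref{le:ELu}, which handles only \emph{post}composition $\vec L\circ\vec u$; precomposition requires the separate change-of-variables computation for $\overline\E_{\vec u\circ\vec\Psi}$, and the paper explicitly invokes \cite[Sect.\ 8]{HeMo11} or \cite[Sect.\ 6]{HeRo18} at exactly this point, after fixing $\delta>0$ so that $\vec P|_{(\delta,\infty)\times I\times\R}$ extends to a diffeomorphism of an open neighbourhood. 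Without that reference (or a proof), your argument has a gap at the step you yourself flag as delicate.

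Second, for the equicontinuity in part c), you invoke Lebesgue monotonicity of the components of a sense-preserving planar $H^1$ map and the borderline Morrey oscillation estimate, whereas the paper cites \cite[Lemma 2.1]{FoGa95} from the finite-distortion literature. Both are standard and give the same local modulus of continuity controlled by the Dirichlet energy; your formulation is arguably more self-contained but is ultimately an equivalent packaging of the same classical fact.
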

\begin{proof}
Let $\d>0$.
By Lemma \ref{le:uvSobolev}, $\vec v$ is in $H^1 (\pi(\Om)\setminus ([0, \d] \times \R),\R^2)$ and, by Lemma \ref{le:uvinj}, $\det D\vec v >0$ a.e\@.
By classical results on maps of finite distorsion, originally due to \cite{Reshetnyak67,VoGo76} 
(see also, e.g., \cite[Th.\ 2.5.4, Th.\ 5.3.5 and its Cors.\ 1 and 3]{GoRe90} or \cite[Th.\ 2.3, Cor.\ 2.25 and Th.\ 4.5]{HeKo14}), $\vec v$ has a 
representative $\overline{ \vec v}$ in $\pi(\Om) \setminus ( [0, \d] \times \R)$ 
that is continuous, differentiable a.e.\ and satisfies the $(\text{N})$ property.
That $\E (\vec v, \pi(\Om)\setminus ([0, \d]\times \R)) = 0$ is also a classical result (see, e.g., \cite{Muller88,Sverak88,Muller90CRAS,MuQiYa94}).
As this is true for every $\d>0$, property \emph{\ref{item:Auv})} is proved.

We define the representative $\bar{\vec u}$ of $\vec u$ through formula \eqref{eq:uv}, but changing $\vec u, \vec v$ by $\bar{\vec u}, \bar{\vec v}$, respectively.
As in Lemma \ref{le:uvinj}, we readily obtain that $\bar{\vec u}$ is continuous in $\Om\setminus \R\vec e_3$ and differentiable a.e\@. We now show the $(\text{N})$ property for $\bar{\vec u}$.
Let $A$ be a null set in $\Om\setminus \R \vec e_3$, and for each $\t \in \R$ define $A_{\t}:= \{ (r, x_3) \colon \vec P (r, \t, x_3) \in A \}$.
Since $\mc{L}^3 (\vec P^{-1} (A)) = 0$, we have that $\mc{L}^2 (A_{\t})= 0$ for a.e.\ $\t \in \R$.
For any such $\t$ we have that $\bar{\vec v} (A_{\t})$ is $\mc{L}^2$-null.
By \eqref{eq:uv}, $\vec P(s,\t,y_3)\in \bar{\vec u} (A)$ if and only if $(s,y_3)\in \bar{\vec v} (A_\t)$.
Consequently,
\begin{align*}
\mc{L}^3 (\bar{\vec u} (A))
=\int_{\R} \int_0^{2 \pi} \int_0^{\infty} \chi_{\bar{\vec u}(A)} (\vec P (s,\t,y_3)) \, s \, \dd s \, \dd\t \, \dd y_3
=\int_{\R} \int_0^{2 \pi} \int_0^{\infty} \chi_{\bar{\vec v} (A_\t)} (s,y_3) \, s \, \dd s \, \dd \t \, \dd y_3 .
\end{align*}
Therefore, for any $R>0$,
\begin{equation*}\begin{split}
\mc{L}^3 (\bar{\vec u} (A) \cap B(\vec 0, R)) \leq R \int_{\R} \int_0^{2 \pi} \int_0^{\infty} \chi_{\bar{\vec v} (A_\t)} (s,y_3) \, \dd s \, \dd\t \, \dd y_3 = R \int_0^{2\pi}\mc{L}^2(\bar{\vec v}(A_\t)) \, \dd\t = 0.
\end{split}\end{equation*}
As this is true for every $R>0$, we obtain that $\mc{L}^3 (\bar{\vec u} (A)) = 0$.
Thus, $\bar{\vec u}$ satisfies condition (N) in $\O \setminus \R \vec e_3$.

Let $I \subset \R$ be an open interval of length less than $2 \pi$ and define the function $\vec w$ in the set $\{ (r, \t, x_3) : (r, x_3) \in \pi (\O) \setminus ( \{0\} \times \R), \, \t \in I \}$ as $\vec w(r, \t, x_3) := (v_1 (r, x_3), \t, v_2 (r, x_3) )$.
As shown above, $\E (\vec v, \pi(\Om)\setminus (\{0\}\times \R)) = 0$, so by Lemma \ref{le:Evid}, $\E (\vec w) = 0$.
By Lemma \ref{le:ELu}, $\E (\vec P \circ \vec w) = 0$, so by \eqref{eq:uv}, $\vec u \circ \vec P$ has zero surface energy in $\{ (r, \t, x_3) : (r, x_3) \in \pi (\O) \setminus ( \{0\} \times \R), \, \t \in I \}$.
Let $\d > 0$.
As $\vec P |_{(\d, \infty) \times I \times \R}$ is a diffeomorphism that admits an extension to an open set containing the closure of $(\d, \infty) \times I \times \R$, by \cite[Sect.\ 8]{HeMo11} or \cite[Sect.\ 6]{HeRo18}, $\vec u$ has zero surface energy in $\Om \cap \vec P \left( (\d, \infty) \times I \times \R \right)$.
Considering now two open intervals $I_1$ and $I_2$ of length less than $2 \pi$, it is easy to check (see, if necessary, the proof of \cite[Lemma 4.8]{MoOl19}) that $\vec u$ has zero surface energy in $\Om \cap \vec P \left( (\d, \infty) \times (I_1 \cup I_2) \times \R \right)$.
Taking, additionally $I_1$ and $I_2$ such that $[0, 2\pi] \subset I_1 \cup I_2$, we obtain that $\vec u$ has zero surface energy in $\O \setminus \overline{C}_{\d}$.
Consequently, $\E (\vec u, \O \setminus \R\vec e_3) = 0$.

Now we show \emph{\ref{item:wciii})} and assume that $\det D \vec u_j > 0$ a.e.\ for all $j \in \N$.
By Lemma \ref{le:uvinj}, we have that $\det D \vec v_j > 0$ a.e.\ for all $j \in \N$.
Since the $H^1$ norm of $\{ \vec v_j \}_{j \in \N}$ is bounded in $\pi(\Om) \setminus ( [0, \d] \times \R )$ for each $\d>0$, by a classic result on maps of finite distortion (see, e.g., \cite[Lemma 2.1]{FoGa95} for a precise reference), the family $\{ \vec v_j \}_{j \in \N}$ is equicontinuous in each compact set of $\pi(\Om) \setminus ( [0, \d] \times \R )$, hence in each compact set of $\pi(\Om) \setminus (\{ 0 \} \times \R )$.
By the Ascoli--Arzel\`a theorem, and part \emph{\ref{item:wcii})} of Lemma \ref{le:weakconv}, $\vec v_j \to \vec v$ uniformly in each compact set of $\pi(\Om) \setminus (\{ 0 \} \times \R )$, in principle up to a subsequence, but in fact the convergence holds for the whole sequence because $\vec v$ is uniquely determined.
Having in mind \eqref{eq:uv}, we obtain that $\vec u_j \circ \vec P \to \vec u \circ \vec P$ uniformly in each compact subset of
\[
 \left\{ (r, \t, x_3 ) : \, (r, x_3) \in \pi (\O) \setminus (\{ 0 \} \times \R ) , \, \t \in \R \right\} .
\]
Then, as before, $\vec u_j \to \vec u$ uniformly in each compact subset of $\Om\setminus \{ (0, 0) \} \times \R$.
\end{proof}

The assumptions of Lemma \ref{N condition} will hold in most of the paper, and, in this case, without further mention, $\vec u$ and $\vec v$ are taken 
to be the continuous representative of themselves in the sets $\Om\setminus \R\vec e_3$ and $\pi(\Om) \setminus (\{ 0\} \times \R)$, respectively.

\section{Existence of minimizers of the neo-Hookean energy in the class $\mathcal{A}_s$}\label{sec:existence_as}

We prove in this section that, although the results of Ball \& Murat show that \(E\) is not  weakly lower semicontinuous in the full \(3D\) setting because of the phenomenon of cavitation, when restricted to the axisymmetric setting we can prove that \(E\) is weakly lower semicontinuous. The reason for that is that maps in \(\As\) are continuous outside the axis of symmetry, so cavitation can only occur on the axis of symmetry. Hence axisymmetric cavitation can be viewed as a cavitation on the boundary of the \(2D\) subdomain \(\pi(\O)\) and does not contradict $W^{1,2}$-quasiconvexity.

\begin{proposition}\label{prop:closedeness_of_Asym}
Let \( \{\vec u_n\}_n\) be a sequence in \(\As\). Then there exists \(\vec u \in \As\)  such that, 
up to a subsequence, \(\vec u_n \rightharpoonup \vec u\) in \(H^1(\widetilde{\Om},\R^3)\), 
\begin{equation*}
\det D \vec u_n \rightharpoonup \det D \vec u \text{ in } L^1(\widetilde\Om),
\end{equation*}
$\chi_{\imG(\vec u_n,\widetilde \Om)} \to \chi_{\imG(\vec u,\widetilde \Omega)}$ a.e.\ as $n \to \infty$, and
\begin{equation}\label{eq:lower-semicontinuity}
E(\vec u) \leq \liminf_{n\rightarrow \infty} E(\vec u_n).
\end{equation}
\end{proposition}
 
\begin{proof}
Since $E(\vec u_n)\leq E(\vec b)$ for all $n \in \N$, we have, thanks to \eqref{eq:E} and Poincar\'e's inequality together with the boundary condition $\vec u_n=\vec b$ in $\Omega_D$, that $\{ \vec u_n \}_{n \in \N}$ is bounded in $H^1(\widetilde{\Om},\R^3)$.
Thus there exists $\vec u\in H^1(\widetilde{\Om},\R^3)$ such that, 
up to a subsequence $\vec u_n \rightharpoonup \vec u$ in $H^1$ and \(\vec u_n \rightarrow \vec u\) a.e.
Therefore, 
\(\vec u=\vec b \) a.e.\ on \(\widetilde{\Om}\setminus \Om\).
Moreover, by Lemma \ref{le:weakconv}, $\vec u$ is axisymmetric. 
Besides, we have $\sup_n\int_{\tilde{\Om}} H(\det D \vec u_n) <+ \infty$. By using the De La Vall\'ee Poussin criterion, we 
find that there exists $ d \in L^1(\widetilde{\Om})$ such that 
\begin{equation*}
\det D \vec u_n \rightharpoonup d \ \text{ in } \ L^1(\widetilde{\Om}).
\end{equation*}
A standard argument based on \eqref{eq:explosiveH} and Fatou's lemma (see, e.g., \cite[Th.\ 5.1]{MuSp95}) shows that $d >0$ a.e.\ in $\widetilde{\Om}$.
Let \(\vec v_n\) the \(2D\) map associated to \(\vec u_n\). From \ref{le:uvinj} we have \(\det D\vec v_n>0\) a.e.\ in \(\pi(\widetilde{\O})\).
By Lemma \ref{le:weakconv} we also have \(\vec v_n\rightharpoonup \vec v\) in \(H^1_{\text{loc}}(\pi(\widetilde{\O}\setminus (\{0\}\times \R)),\R^2)\). 
We can thus apply the result about higher integrability of the Jacobians due to M\"uller \cite{Muller90} to obtain that \(\det D \vec v_n \rightharpoonup \det D \vec v\) 
in \(L^1_{\text{loc}}(\pi(\widetilde{\O}\setminus (\{0\}\times \R)))\). From Sobolev injections we find that \(v_1^n \rightarrow v_1\) in
 \(L^2_{\text{loc}}(\pi(\widetilde{\O}\setminus (\{0\}\times \R)))\) and a.e.\ (up to a subsequence).
Fix a small $\delta>0$. Since $\vec u=\vec b$ in $\widetilde \Omega \setminus \Omega$,
a consequence of Lemma \ref{N condition} applied to $\widetilde \Omega$ is that $\vec v$ is bounded and $(\vec v_n)$ is uniformly bounded in $\pi(\widetilde{\Omega})\setminus ([0,\delta]\times \R)$.
Also, in the axisymmetric setting we have that
 \begin{align*}
  \det D\vec u_n (\vec x) = \frac{1}{r}v_1^n(r,x_3)\det D\vec v_n (r, x_3)
 \end{align*}
 (see, e.g., \eqref{eq:comatrix_cylindrical_cylindrical} in the Appendix).
 Hence the use of Egorov's theorem, see e.g.\ \cite[Lemma 6.7]{SiSp00}, implies that
\begin{equation*}
\det D \vec u_n \rightharpoonup \det D \vec u \text{ in } L^1_{\text{loc}}(\widetilde{\O}\setminus L).
\end{equation*}
Since \(L\) has zero Lebesgue measure, we find that \(\det D\vec u=d>0\) a.e.\ in \(\widetilde{\O}\) and \(\det D \vec u_n \rightharpoonup \det D \vec u \text{ in } L^1(\widetilde{\O})\).

By Lemma \ref{N condition}, $\mc{E} (\vec u_n, \widetilde \O \setminus \R\vec e_3) = 0$ for all $n \in \N$.
Then, by \cite[Th.\ 2]{HeMo10}, 	 $\vec u$ is injective a.e. and for a.e.\ $\delta>0$ we have 
$\chi_{\imG(\vec u_n,\widetilde\Om\setminus \overline C_\delta)} \to \chi_{\imG(\vec u,\widetilde \Omega\setminus \overline C_\delta)}$ a.e.\ as $n \to \infty$.
From here, using the equiintegrability of the Jacobians, it is easy to prove that $\chi_{\imG(\vec u_n,\widetilde \Om)} \to \chi_{\imG(\vec u,\widetilde \Omega)}$ in 
$L^1(\widetilde \Omega)$. Passing to a subsequence we obtain the stated a.e.\ convergence.
		
Thanks to the weak continuity of the Jacobian and the convexity of $H$, we have that
\(E\) is sequentially lower semicontinuous for the weak convergence in \(H^1\), i.e., 
\eqref{eq:lower-semicontinuity} holds, and, in particular, $E(\vec u)\leq E(\vec b)$.
\end{proof} 

\begin{theorem}\label{th:main1}
The energy $E$ has a minimizer  in \(\As \).
\end{theorem}
\begin{proof}
It follows from the direct method of calculus of variations, since Proposition \ref{prop:closedeness_of_Asym} shows the weak lower semicontinuity of $E$
and the sequential weak compactness of $\As$.
\end{proof}

\section{Regularity of inverses of maps in $\As$}
	\label{se:fine_properties}

The goal of this section is to give the appropriate definition of the inverse of maps in \(\As\) and to prove that its first two components enjoy Sobolev regularity. 
This property is crucial in the proof of Proposition \ref{prop:F_lower_semi_cont}, which is the key for our main result Theorem \ref{th:main_theorem_introduction}.

\subsection{Topological degree}

We first recall how to define the classical Brouwer degree for continuous functions \cite{Deimling85,FoGa95book}. Let \(N \geq 2\). 
Let \( U \subset \R^N \) be a bounded open set. If  \(\vec u \in C^1(\overline{U},\R^N)\) 
then for every  regular value  \(\vec y\) of \(\vec u\) we set
\begin{equation}\label{def:degree_1}
\deg(\vec u,  U, \vec y)= \sum_{\vec x \in {\vec u}^{-1}(y) \cap U} \det D {\vec u}(\vec x).
\end{equation}
This sum is finite thanks to the inverse function theorem. We can show that the right-hand side of \eqref{def:degree_1} is invariant by homotopies. 
This allows to extend the definition \eqref{def:degree_1} to every \(\vec y \notin \vec u(\p U)\) and to show its
depends only on the boundary values of $\vec u$.
If \(\vec u\) is only in \( C(\p U,\R^N)\),
we may extend $\vec u$ to a continuous map in $\overline U$ by Tietze's theorem and set
\begin{equation*}
\deg(\vec u, U, \cdot)= \deg(\vec v, U ,\cdot),
\end{equation*}
where \(\vec v\) is any map in \(C^1(\overline U, \R^N)\) which is homotopic to the extension of \(\vec u\).

If $U$ is of class $C^1$ and \(\vec u \in C^1(\p U,\R^N)\), by using \eqref{def:degree_1}, Sard's theorem and the divergence 
identities \eqref{eq:divergence_identities}, we can make a change of variables and integrate by parts to obtain
\begin{equation}\label{def:degree_integral}
\int_{\R^N} \deg(\vec u ,U,\vec y) \div \vec g(\vec y) \, \dd \vec y = \int_{\p U} (\vec g \circ \vec u) \cdot \left( \cof D \vec u \, \vec \nu \right) \dd \mathcal{H}^{N-1}.
\end{equation}
This formula can be used as the definition of the degree for maps in \(W^{1,N-1} \cap L^\infty(\p U,\R^N)\) as noticed by Brezis \& Nirenberg \cite{Brezis_Nirenberg_1995}.
For any open set $U$ having a positive distance away from the symmetry axis $\R^3$ it is possible to use the classical degree
since there every map in $\mathcal A_s$ has a continuous representative (Lemma \ref{N condition}).
However, for open sets $U$ crossing the axis (where maps in $\mathcal A_s$ may have singularities) we use the Brezis--Nirenberg degree.
 
\begin{definition}\label{prop:degree}
Let $U\subset \R^N$ be a bounded open set. For any $\vec u \in C(\partial U, \R^N)$ and any $\vec y \in \R^N\setminus \vec u(\partial U)$
we denote by $\deg(\vec u,  U, \vec y)$ the classical topological degree of $\vec u$ with respect to $\vec y$.
Suppose now that \(U \subset \R^N\) is a \(C^1\) bounded open set and \(\vec u \in W^{1,N-1}(\p U,\R^N)\cap L^\infty(\p U,\R^N)\). 
Then the degree of \(\vec u\), denoted by \( \deg( \vec u, U, \cdot)\), is defined as the only \(L^1\) function that satisfies \eqref{def:degree_integral}
for all \( \vec g \in C^\infty(\R^N,\R^N)\).
\end{definition}

To see that this definition makes sense we refer to \cite{Brezis_Nirenberg_1995} or \cite[Rk.\ 3.3]{CoDeLe03}. Also, using  \eqref{def:degree_integral} 
for a sequence of smooth maps approximating $\vec u$ we can see that for any 
\(\vec u \in C(\p U, \R^N)\cap W^{1,N-1}(\p U,\R^{N-1})\) such that $\mathcal L^N\big ( \vec u(\partial U)\big )=0$
the two definitions are consistent (as stated in \cite[Prop.~2.1.2]{MuSp95}).
Thanks to Lemma \ref{N condition}, our maps $\vec u$ satisfiy the (N) property, so the condition on $\vec u(\partial U)$
is satisfied for all regular open sets $U$ that are a distance apart from the axis.
On the one hand, by the continuity property of the degree we can see that the topological image (Definition \ref{de:imT} below) of a bounded open set is open.
This will give us adequate ambient spaces to work with in the deformed configuration, see Equation \eqref{eq:Om'}.  
On the other hand, it is by working with the Brezis--Nirenberg degree that the Sobolev regularity of the inverses 
(crucial for the lower semicontinuity result presented in this paper) is obtained in the presence of singularities (Proposition \ref{INV2}).

We will invoke many previous results about the degree and related concepts (such as the topological image or the condition INV; see below).
Most of the references that we cite use the degree with slightly different assumptions on $\vec u$.
Nevertheless, their proofs apply to our case with only small modifications.

\subsection{Topological image for the classical degree}\label{top image classic}

An important part of our analysis refers only to open sets $U$
that either are a distance apart from the symmetry axis or enclose entirely the closed segment  
\begin{equation}\label{def:segment_L}
L:= \overline{\O}  \cap \R \vec e_3
\end{equation}
where the singularities can occur.
To be precise, we use the setting of Section \ref{se:bdryData} and 
frequently deal with open sets $U \subset \widetilde{\O}$ such that 
$\partial U \cap L=\varnothing$.
Since $\vec u=\vec b$ in the Dirichlet region $\Omega_D=\widetilde \Omega\setminus \overline \Omega$, 
the map $\vec u$ is continuous in $\p U$ and, hence, the classical degree $\deg (\vec u, U, \cdot)$ is well defined.
For those sets $U$ the topological image is defined as follows.

\begin{definition}\label{de:imT}
Let \(N \geq 2\). Let $U$ be a  bounded open set of $\R^N$ and let $\vec u \in C(\p U,\R^N)$.
We define $\imT(\vec u,U)$, the topological image of $U$ under $\vec u$, as the set of $\vec y\in \R^N \setminus \vec u (\p U)$ such that $\deg( \vec u, U ,\vec y) \neq 0$.
\end{definition}

In the $2D$ case, the topological image through an orientation-preserving $H^1$ map enjoys some nice geometric properties.
Let $A$ be a bounded domain of $\R^2$ and let $\vec v\in H^1(A,\R^2)$
be a map such that $\det D \vec u > 0$ a.e. in $A$. As recalled in the proof of Lemma \ref{N condition}, $\vec v$ has a continuous representative. In what follows, we
identify $\vec v$ with that representative. Let $V$ be compactly included in $A$. We assume that  $V\in \mathcal{U}_{\vec v}$, i.e., all the analogous properties of Definition \ref{def:good_open_sets} are satisfied for the planar map $\vec v$.
In \cite[Lemma 5.4]{BaHeMo17} it was proved the following result:
\begin{equation}\label{eq:bdryimT}
\overline{\imT(\vec v, V)} = \imT(\vec v, V) \cup \vec v(\partial V) \quad \text{and} \quad \p \imT(\vec v, V)=\vec v(\partial V).
\end{equation}

Define (cf.~\cite[Def.~5.6]{BaHeMo17}) the topological image of a point $\vec x$ as
\begin{equation*}
\imT\big ( \vec v, \vec x \big )
:= \bigcap_{\rho>0 , \: B(\vec x, \rho) 
\in \mathcal{U}_{\vec v}}
\overline{\imT\big (\vec v, B(\vec x, \rho)\big )}.
\end{equation*}	
Since our map $\vec v$ is continuous, we have the characterization
$$\imT\big ( \vec v, \vec x\big ) =
\{\vec v(\vec x)\} \  \text{for every } \vec x\in V.$$
Indeed, by \cite[Cor.\ 1]{Sverak88}, $\vec v(B(\vec x,\rho))$ is included a.e.\ in $\overline{\imT(\vec v, B(\vec x,\rho)) }$, 
for each $\rho > 0$ with $B(\vec x, \rho) \in \mathcal{U}_{\vec v}$.
Therefore, since $\overline{\imT(\vec v, B(\vec x,\rho)) }$ is compact, the continuity implies that $\vec v(\vec x)$ belongs to 
$\overline{\imT(\vec v, B(\vec x,\rho)) }$ and therefore to 
$\imT\big ( \vec v, \vec x \big )$, since $\rho$ is arbitrary.
On the other hand, again by the continuity, $\imT( \vec v, \vec x)$ is a 
singleton as proved in \cite[Lemma 4]{Sverak88}.

Assume now $\vec v$ is injective a.e.\ in $A$. 
Let
\begin{equation*}
 G(\vec y):=\{\vec x\in \overline{V} \colon \vec v(\vec x)=\vec y\} \quad \text{and} \quad T:=\{\vec y\in \imT(\vec v, V) \colon G(\vec y) \text{ is not a singleton}\}.
\end{equation*}
Thus, $T$ is the image of the points where $\vec v$ is not injective. 
By \cite[Lemma 5.13 and Prop.\ 5.14]{BaHeMo17} we have that
if $\vec y\in\imT(\vec v,V)$, then $G (\vec y)\subset V$ and $G (\vec y)$ is connected.
Moreover, $\imT(\vec v,V)\subset \bigcup_{\vec x\in V}\imT( \vec v, \vec x)$ 
and $G (\vec y)\cap V \neq \varnothing$ for every $\vec y\in\imT(\vec v,V)$.
This means that whenever we take $\vec y\in \imT(\vec v,V)$, there is at least
an $\vec x\in V$ such that $\vec y\in \imT( \vec v, \vec x)$. 
By \cite[Th.\ 7]{Sverak88}, we have $\mathcal{H}^1(T)=0$.
We will use this last property later in order to define the inverse.
To sum up:  $\partial V$ is mapped by $\vec v$ onto 
$\partial\imT(\vec v, V)$, while $V$ is mapped in $\overline{\imT(\vec v, V)}$,
with the possibility that a point $\vec x$ is mapped to a point $\vec y\in \partial\imT(\vec v, V)$. However, when this happens, there exists a 
set $G(\vec y)\subset\overline{V}$ such that $\vec x\in G(\vec y)$ and 
$G(\vec y)\cap \partial V\neq\varnothing$. Roughly speaking, the deformation $\vec v$
may pinch an internal part of the domain to the boundary (see Figure \ref{fig1}).

\begin{figure}
\centering
\includegraphics[width=.61\textwidth]{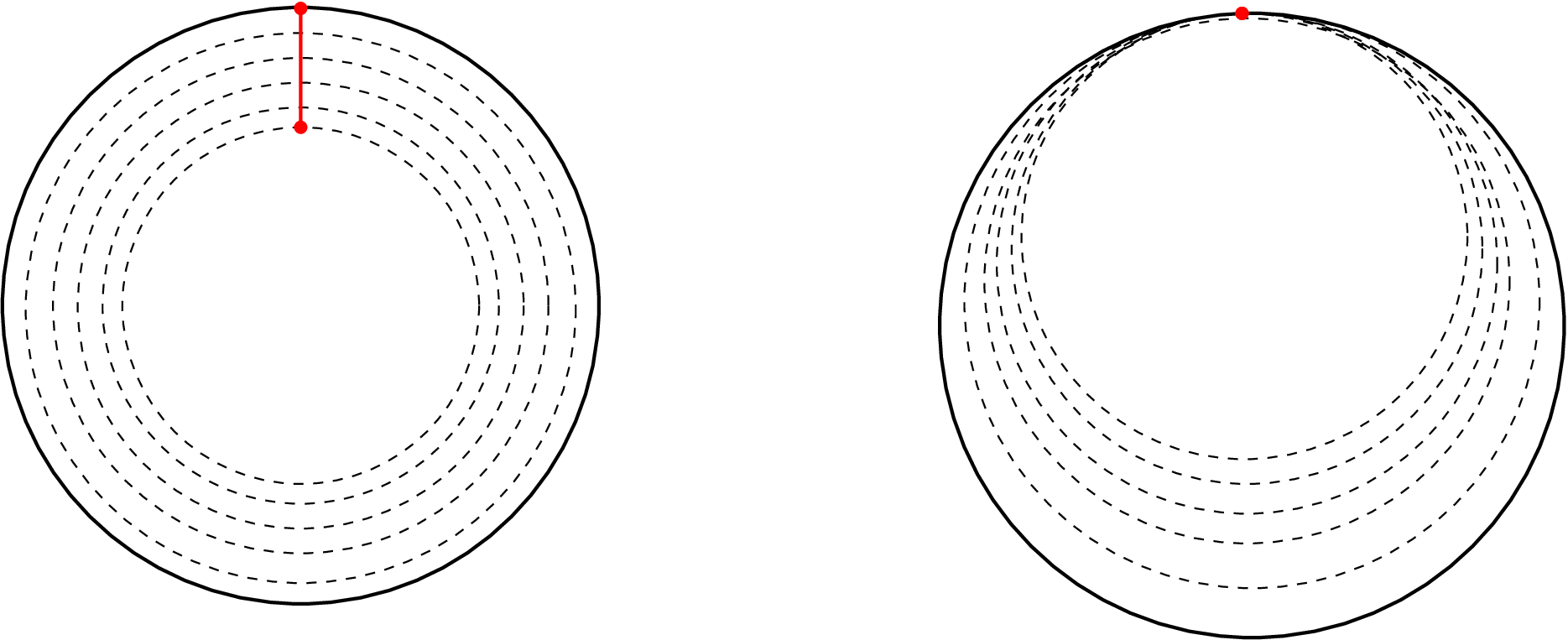}
\caption{On the left the sets $V$ and in red the set $G(\vec y)$, on the right
the set $\imT( \vec v, V)$ and in red the point $\vec y$ on the boundary.}
\label{fig1}
\end{figure}
\begin{lemma}\label{le:imTmonotone}
Let $A$ be a bounded domain of $\R^2$ and let $\vec v\in H^1(A,\R^2)$
be injective a.e.\ and such that $\det D \vec v > 0$ a.e. Moreover,
let $V\in  \mathcal{U}_{\vec v}$ and let be $U$ an open set such that $U\Subset V$. 
Then $\imT( \vec v, U)\subset\imT( \vec v, V)$.
\end{lemma}
\begin{proof}
By \cite[Lemma 6.2]{Reshetnyak89}, 
$\overline{\imT( \vec v, U)}\subset\overline{\imT( \vec v, V)}$.
Assume by contradiction that there exists $\vec y\in \imT( \vec v, U)$ such that $\vec y\in \partial\imT( \vec v, V)$. Let $\vec x_1\in U$ and $\vec x_2\in \partial V$
be such that $\vec v(\vec x_1)=\vec v(\vec x_2)=\vec y$. Since $\vec v$ is continuous 
and $\imT( \vec v, U)$, $\imT( \vec v, V)$ are open, there exist $\vec x'_1\in U$ and 
$\vec x'_2\in V\setminus\overline U$ such that $\vec v(\vec x'_1)=\vec v(\vec x'_2)=\vec y'$ for some $\vec y'\in \imT( \vec v, U)\cap \imT( \vec v, V)$ close to $\vec y$.
Then, since $G(\vec y')$ is connected, there exists $\vec x_3'\in \partial U\cap G(\vec y')$. Therefore, 
$\vec v(\vec x'_1)=\vec v(\vec x'_3)\in\partial\imT( \vec v, U)$, and the initial assumption must be false since $\imT( \vec v, U)$ is open. 
\end{proof}

Some of the properties of the topological image in the $2D$ case can be transposed to the axisymmetric setting, thanks to the following result.

\begin{lemma}\label{le:imTuimTv}
Let \(\Om \subset \R^3\) be an axisymmetric domain. Let $ \vec u\in H^1 (\Om, \R^3)$ be axisymmetric and satisfy $\det D \vec u > 0$ a.e., and let $\vec v$ be its corresponding function.
Let $U \subset \O$ be an axisymmetric  open set such that \(\overline{U} \cap \R \vec e_3 =\varnothing\).
Then $\imT(\vec v,\pi(U)) = \pi (\imT(\vec u, U))$.
\end{lemma}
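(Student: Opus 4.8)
The plan is to reduce the statement to a pointwise identity between the classical (Brouwer) degrees of $\vec u|_{\p U}$ and $\vec v|_{\p\pi(U)}$, and to derive that identity from the fact that, in cylindrical coordinates, $\vec u$ is essentially $\vec v$ with an identity factor in the angular variable. Since $\overline U\cap\R\vec e_3=\varnothing$, Lemma \ref{N condition} furnishes continuous representatives of $\vec u$ on $\overline U$ and of $\vec v$ on $\overline{\pi(U)}$, so the degrees appearing here are the classical ones (cf.\ the remarks after Definition \ref{prop:degree}). First I would record the elementary facts that make the statement meaningful: the axisymmetry of $U$ and \eqref{eq:axi}--\eqref{eq:uv} give $\pi\circ\vec u=\vec v\circ\pi$ on $\p U$, $\pi(\p U)=\p\pi(U)$, hence $\vec v(\p\pi(U))=\pi(\vec u(\p U))$ and $\pi$ maps $\R^3\setminus\vec u(\p U)$ onto $([0,\infty)\times\R)\setminus\vec v(\p\pi(U))$; moreover $\vec u(\p U)$ and $\imT(\vec u,U)$ are axisymmetric, the latter because $\vec y\mapsto\deg(\vec u,U,\vec y)$ is invariant under rotations $R_\alpha$ about the $x_3$-axis (a consequence of $\vec u\circ R_\alpha=R_\alpha\circ\vec u$ and $R_\alpha(U)=U$, together with the behaviour of the degree under composition with the orientation-preserving diffeomorphism $R_\alpha$). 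With these in hand, the lemma reduces to
\[
\deg(\vec u,U,\vec y)=\deg\big(\vec v,\pi(U),\pi(\vec y)\big)\qquad\text{for every }\vec y\in\R^3\setminus\vec u(\p U),
\]
because then $\vec y\in\imT(\vec u,U)$ if and only if $\pi(\vec y)\in\imT(\vec v,\pi(U))$, and applying $\pi$ (using the axisymmetry of $\imT(\vec u,U)$) yields the asserted equality of sets.

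To prove the degree identity I would argue by approximation. Mollifying $\vec v$ (continuous on the open set $\pi(\Om)\setminus(\{0\}\times\R)\supset\overline{\pi(U)}$) and adding a constant $\varepsilon_k\to0$ to the first component, one obtains smooth maps $\vec v_k$ with $v_{k,1}\ge0$ and $\vec v_k\to\vec v$ uniformly on $\overline{\pi(U)}$; let $\vec u_k$ be the axisymmetric map associated with $\vec v_k$ through \eqref{eq:uv}. Writing $\vec u_k=\vec P\circ\Phi_k\circ\vec P^{-1}$ with $\Phi_k(r,\theta,z)=(v_{k,1}(r,z),\theta,v_{k,2}(r,z))$, one sees that $\vec u_k$ is $C^1$ in a neighbourhood of $\p U$ (all three maps being smooth away from the axis, even where $v_{k,1}$ vanishes, since $\vec P$ is smooth there) and that $\vec u_k\to\vec u$ uniformly on $\overline U$. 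By stability of the classical degree under uniform convergence of the boundary map, it suffices to prove the identity for $\vec u_k,\vec v_k$. Fix $\vec y$ with $\pi(\vec y)=(s,w)$; by rotational invariance we may take $\vec y=(s,0,w)$. For $s>0$, choose via Sard's theorem a regular value $(s',w')$ of $\vec v_k|_{\overline{\pi(U)}}$ with $s'>0$, as close to $(s,w)$ as we wish: since $v_{k,1}\ge0$ and $s'>0$, the preimage $\vec u_k^{-1}(s',0,w')\cap U$ is exactly $\{\vec P(r,0,z):(r,z)\in\vec v_k^{-1}(s',w')\cap\pi(U)\}$; by \eqref{eq:radialdet}, $\det D\vec u_k(\vec P(r,0,z))=\tfrac{s'}{r}\,\det D\vec v_k(r,z)$, so $(s',0,w')$ is a regular value of $\vec u_k|_{\overline U}$ as well, and since $\tfrac{s'}{r}>0$ the signed preimage counts coincide: $\deg(\vec u_k,U,(s',0,w'))=\deg(\vec v_k,\pi(U),(s',w'))$. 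Local constancy of both degrees then gives $\deg(\vec u_k,U,\vec y)=\deg(\vec v_k,\pi(U),(s,w))$, and letting $k\to\infty$ establishes the identity for $s>0$. For $s=0$ the preimage of $(0,0,w)$ under an axisymmetric map is an entire circle, so the regular-value formula is unavailable; instead I would pass to the limit $s\to0^+$, using that $\deg(\vec u,U,\cdot)$ and $\deg(\vec v,\pi(U),\cdot)$ are locally constant on the complements of $\vec u(\p U)$ and $\vec v(\p\pi(U))$ and that $(0,0,w)\notin\vec u(\p U)$, $(0,w)\notin\vec v(\p\pi(U))$, so the values at these points equal the (already matching) values at nearby off-axis points. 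One also sees here that $\deg(\vec v,\pi(U),(0,w))=0$, since the ray $\{(t,w):t\le0\}$ joins $(0,w)$ to infinity inside $\R^2\setminus\vec v(\p\pi(U))$; consequently $\imT(\vec v,\pi(U))$ and $\imT(\vec u,U)$ stay away from the axis.

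The step I expect to require the most care is the approximation/reduction to smooth maps: one must verify that the axisymmetric smooth approximants $\vec u_k$ are genuinely $C^1$ near $\p U$ (this is why keeping $v_{k,1}\ge0$ and using the factorisation through the smooth map $\vec P$ matters: it rules out spurious $\theta=\pi$ preimages and degeneracies at points where $v_{k,1}=0$), that the classical degrees pass to the limit, and that the behaviour on the symmetry axis is settled by the limiting argument above rather than by preimage counting. The remaining ingredients---the coordinate identities, Sard's theorem, and \eqref{eq:radialdet}---are routine.
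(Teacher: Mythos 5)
Your proof is correct, but it takes a genuinely different route from the paper's. The paper works directly at the level of the abstract degree: it introduces the product domain $U_I=\pi(U)\times I$ (with $I$ a short interval) and the product map $(v_1,\id_\R,v_2)$, applies the product formula for the Brouwer degree (giving $\deg((v_1,\id_\R,v_2),U_I,\cdot)=\deg(\vec v,\pi(U),\cdot)$), then uses the composition formula for the degree with the diffeomorphism $\vec P$ to pass to $\vec u\circ\vec P$ and $\vec u$, and finally the excision property to replace $\vec P(U_I)$ by $U$. This yields the key pointwise identity $\deg(\vec v,\pi(U),\vec z)=\deg(\vec u,U,\vec P(z_1,\theta,z_2))$ exactly (Equation~\eqref{eq:deguvII}), after which the set inclusions are checked directly. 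Your route arrives at the same degree identity (modulo your axisymmetry preliminaries, which overlap with the paper's facts i)--iii) about $\pi(\p U)=\p\pi(U)$), but proves it by mollifying $\vec v$, lifting to a $C^1$ axisymmetric approximant $\vec u_k$, invoking Sard's theorem and the elementary signed-preimage definition \eqref{def:degree_1}, using \eqref{eq:radialdet} to match signs, and then passing to the limit via stability of the degree under uniform boundary convergence. What your argument buys is elementarity: it bypasses the product and composition formulas (less routinely quoted than excision) in favour of the regular-value definition. What it costs is the approximation machinery: you must verify $\vec u_k\in C^1(\overline U)$ (the factorisation through $\vec P$ and the restriction $v_{k,1}\ge0$ are indeed the right tools, and you correctly flag this as the delicate step), and you need a separate limiting argument for points on the axis $s=0$, whereas the paper's degree identity handles all $\vec z\notin\vec v(\p\pi(U))$ at once. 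Both approaches are sound; the paper's is shorter once the three degree properties are granted.
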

\begin{proof}
Let $\vec z \in \R^2 \setminus \vec v (\p \pi(U))$ and $\t \in \R$.
Let $I$ be an open interval of length less than $2 \pi$ containing $\t$.
Set $U_ I := \{ (r, \t', x_3) : (r, x_3) \in \pi(U) , \, \t' \in I \}$.
By the product property for the degree (see, e.g., \cite[Th.\ 8.7]{Brown93}),
\begin{align*}
 \deg \left( (v_1, \id_{\R}, v_2 ) , U_ I  , (z_1, \t, z_2) \right) & = \deg \left( \vec v , \pi(U) , \vec z \right) \deg (\id_{\R}, I, \t) = \deg \left( \vec v , \pi(U) , \vec z \right) ,
\end{align*}
where $\id_{\R}$ is the identity map in $\R$.
As $\vec P : \overline{U}_{ I} \to \R^3$ can be extended to an orientation-preserving diffeomorphism in an open set containing $\overline{U}_{ I}$ 
we obtain by the composition formula for the degree (see, e.g., \cite[Th.\ 5.1]{Deimling85}) that
\begin{align*}
 \deg \left( (v_1, \id_{\R}, v_2 ) , U_ I , (z_1, \t, z_2) \right) & = \deg \left( \vec P \circ (v_1, \id_{\R}, v_2 ) , U_ I  , \vec P (z_1, \t, z_2) \right) \\
 & = \deg \left( \vec u \circ \vec P , U_ I  , \vec P (z_1, \t, z_2) \right) ,
\end{align*}
where in the last formula we have used \eqref{eq:uv}.
Applying again the composition formula we obtain
\begin{align*}
 \deg \left( \vec u \circ \vec P , U_ I  , \vec P (z_1, \t, z_2) \right) = \deg \left( \vec u , \vec P (U_ I ) , \vec P (z_1, \t, z_2) \right) .
\end{align*}
Altogether, we have shown that 
\begin{equation*}
 \deg \left( \vec v , \pi(U) , \vec z \right) = \deg \left( \vec u , \vec P (U_ I ) , \vec P (z_1, \t, z_2) \right) .
\end{equation*}
Now we show that
\begin{equation}\label{eq:excision}
 \vec P (z_1, \t, z_2) \notin \vec u \left( \overline{U} \setminus \vec P (U_ I ) \right) .
\end{equation}
Indeed, let $\vec x \in \overline{U} \setminus \vec P (U_ I )$.
Then $\vec x = \vec P ( r \cos \t', r \sin \t', x_3)$ with $(r, x_3) \in \pi (\overline{U})$ and $\t' \in \R \setminus (I + 2 \pi \Z)$.
By \eqref{eq:uv}, $\vec u (\vec x) = \vec P (v_1 (r, x_3), \t', v_2 (r, x_3))$, which implies \eqref{eq:excision}.
In turn, \eqref{eq:excision} and the excision property of the degree (see, e.g., \cite[Th.\ 3.1]{Deimling85} or \cite[Th.\ 8.4]{Brown93}) yield
\[
 \deg \left( \vec u , \vec P (U_ I ) , \vec P (z_1, \t, z_2) \right) = \deg \left( \vec u , U , \vec P (z_1, \t, z_2) \right) ,
\]
which, together with \eqref{eq:excision} shows that
\begin{equation}\label{eq:deguvII}
 \deg \left( \vec v , \pi(U) , \vec z \right) = \deg \left( \vec u , U , \vec P (z_1, \t, z_2) \right) .
\end{equation}
Recapitulating, we have shown that if $\vec z \in \R^2 \setminus \vec v (\p \pi(U))$ and $\t \in \R$ then $\vec P (z_1, \t, z_2) \notin \vec u (\p U)$ 
and formula \eqref{eq:deguvII} holds.

Now let $\vec z \in \imT(\vec v,\pi(U))$.
Then $\vec z \notin \vec v (\p \pi(U))$ and $\deg \left( \vec v , \pi(U) , \vec z \right) \neq 0$.
By \eqref{eq:deguvII}, $\deg \left( \vec u , U , \vec P (z_1, \t, z_2) \right) \neq 0$ for any $\t \in \R$, so $ \vec P (z_1, \t, z_2) \in \imT(\vec u, U)$.
Consequently, $\vec z = (\pi \circ \vec P) (z_1, \t, z_2) \in \pi \left( \imT(\vec u, U) \right)$.

To prove the converse inclusion, we start with the following simple facts:
\begin{enumerate}[i)]
\item\label{item:degi} If $(r, x_3) \in \pi (U)$ and $\t \in \R$ then $\vec P (r, \t, x_3) \in U$.

\item\label{item:degii} If $(r, x_3) \notin \pi (U)$ and $\t \in \R$ then $\vec P (r, \t, x_3) \notin U$.

\item\label{item:degiii} If $(r, x_3) \in \p \pi (U)$ and $\t \in \R$ then $\vec P (r, \t, x_3) \in \p U$.

\end{enumerate}
Property \ref{item:degii}) is obvious, while \ref{item:degi}) is a consequence of the axisymmetry of $U$.
Let us show \ref{item:degiii}).
Let $(r, x_3) \in \p \pi (U)$ and $\t \in \R$.
Note that $\pi (U)$ is an open set in \(\R^2\), so
\[
 \p \pi (U) = \overline{\pi (U)} \setminus \pi (U) .
\]
As $(r, x_3) \in \overline{\pi (U)}$, an elementary argument based on \ref{item:degi}) and the continuity of $\vec P$ shows that $\vec P (r, \t, x_3) \in \overline{U}$.
On the other hand, property \ref{item:degii}) shows that $\vec P (r, \t, x_3) \notin U$, so $\vec P (r, \t, x_3) \in \p U$ and \ref{item:degiii}) is proved.
 
We are in a position to show the inclusion $\pi (\imT(\vec u, U)) \subset \imT(\vec v,\pi(U))$.
Let $\vec z \in \pi (\imT(\vec u, U))$.
Then there exists $\t \in \R$ such that $\vec P (z_1, \t, z_2) \in \imT(\vec u, U)$.
Therefore,
\begin{equation}\label{eq:PimT}
 \vec P (z_1, \t, z_2) \notin \vec u (\p U) \text{ and } \deg \left( \vec u , U , \vec P (z_1, \t, z_2) \right) \neq 0 .
\end{equation}
We shall show that $\vec z \notin \vec v (\p \pi(U))$ by assuming that $\vec z = \vec v (r, x_3)$ for some $(r, x_3) \in \overline{\pi (U)}$.
Due to \eqref{eq:uv},
\[
 \vec P (z_1, \t, z_2) = \vec P (v_1 (r, x_3), \t, v_2 (r, x_3)) = \vec u \circ \vec P (r, \t, x_3) ,
\]
so, by \eqref{eq:PimT}, $\vec P (r, \t, x_3) \notin \p U$, and, by \ref{item:degiii}), $\vec z \notin \vec v (\p \pi(U))$.
Thanks to \eqref{eq:deguvII} and \eqref{eq:PimT}, $\deg \left( \vec v , \pi(U) , \vec z \right) \neq 0$, so $\vec z \in \imT(\vec v,\pi(U))$.
\end{proof}

\subsection{Topological image of the singular segment}

Throughout this section assume that $\vec b$, $\Omega$, $\widetilde \Omega$, and $\Omega_D$ are as in Section \ref{se:bdryData}.
Note that all $\vec u\in \As$ satisfy the properties stated in Lemma \ref{INV3}.

Away from the segment $L=\overline \Omega \cap \R\vec e_3$ condition INV is defined as follows.

\begin{definition}\label{def:INV}
Let $U$ be a bounded open set in 
$\R^3$. 
If $\vec u\in C(U,\R^3)$, we say that $\vec u$ satisfies property (INV) 
in $U$ provided that for every point \(\vec x_0 \in U\) and a.e.\ \(r\in (0,\dist(\vec x_0,\p U))\):
\begin{enumerate}[(a)]
\item \( \vec u(\vec x) \in \imT(\vec u,B(\vec x_0,r))\) for a.e.\ \(\vec x \in B(\vec x_0,r)\)
\item \(\vec u(\vec x) \notin \imT (\vec u,B(\Vec x _0,r))\) for a.e.\ \(\vec x \in \widetilde \O \setminus B(\vec x_0,r)\).
\end{enumerate}
\end{definition}

The degree of any map $\vec u$ in $\As$ with respect to any open set $U$ separated from the symmetry axis
coincides a.e.\  with the number of preimages (at which $\vec u$ is approximately differentiable) by $\vec u$ 
in that open set. This is shown now and relies on the fine regularity properties satisfied 
away from the axis and on the preservation of orientation.

\begin{lemma}
		\label{INV3}
Suppose that $\vec u \in H^1(\widetilde \Omega, \R^3)$ is axisymmetric, satisfies $\det D\vec u>0$ a.e.\ and $\vec u = \vec b$ in $\Omega_D$. 
Then:
	\begin{enumerate}[(a)]
	\item	
	$\vec u$ is continuous in $\widetilde \O \setminus L$
	and $\mathcal E(\vec u, \widetilde \O \setminus L)=0$.

	\item For any $U\in \mathcal{U}_{\vec u}$ (see Definition \ref{def:good_open_sets}) such that $\overline U \cap L =\varnothing$
	\begin{equation}
		\label{eq:deg_positive}
	\deg (\vec u, U, \cdot) = \mc{N}(\vec u,\O_d \cap U, \cdot) \ \text{ a.e.}
	\quad \text{and}\quad 
 \imG (\vec u, U) = \imT(\vec u, U)\ \text{a.e.},
	\end{equation}
	where $\Omega_d$ is the set of approximate differentiability.
	
	\item $\vec u$ satisfies condition (INV) in $\widetilde \Omega \setminus L$ if and only if $\vec u$ is injective a.e.\@ In particular, all maps in $\mathcal A_s$ satisfy (INV) in $\widetilde \Omega \setminus L$. 
	\end{enumerate}
\end{lemma}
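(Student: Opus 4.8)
The plan is to prove the three parts in order: part (a) by a gluing argument that reduces to what is already known for axisymmetric maps, part (b) by an integration by parts together with the area formula, and part (c)---the substantive part---by extracting it from (b), the implication ``(INV) $\Rightarrow$ injective a.e.'' being quoted from the literature. For (a), note that $\widetilde\Omega\setminus L$ is covered by the open sets $\widetilde\Omega\setminus\R\vec e_3$ and $\Omega_D=\widetilde\Omega\setminus\overline\Omega$, since $\Omega_D\cap\R\vec e_3=(\widetilde\Omega\cap\R\vec e_3)\setminus L$. On the first, Lemma \ref{N condition}(b), applied with $\widetilde\Omega$ in place of $\Omega$, provides a representative of $\vec u$ that is continuous, differentiable a.e., satisfies Lusin's (N), and has $\E(\vec u,\widetilde\Omega\setminus\R\vec e_3)=0$; on $\Omega_D$, $\vec u=\vec b$ is a $C^1$ diffeomorphism, so it has all these properties there as well. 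The two representatives agree on the overlap, hence glue to one continuous representative on $\widetilde\Omega\setminus L$; the vanishing of $\E(\vec u)$ on the whole of $\widetilde\Omega\setminus L$ then follows from the locality of the surface energy by a partition-of-unity argument of the type used in \cite[Sect.\ 8]{HeMo11} (see also the proof of \cite[Lemma 4.8]{MoOl19}).

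For (b), fix $U\in\mathcal U_{\vec u}$ with $\overline U\cap L=\varnothing$; then $\overline U$ is a compact subset of $\widetilde\Omega\setminus L$, so by (a) the map $\vec u$ is continuous on $\overline U$---in particular $\vec u|_{\partial U}\in L^\infty(\partial U,\R^3)$, and since $U\in\mathcal U_{\vec u}$ also $\vec u|_{\partial U}\in H^1(\partial U,\R^3)$, so the degree is defined---and $\E(\vec u,U)=0$. Combining the latter with the one-sided approximation properties (c)--(d) of Definition \ref{def:good_open_sets}, one integrates by parts to obtain
\[
 \int_U(\div\vec g)(\vec u(\vec x))\,\det D\vec u(\vec x)\,\dd\vec x=\int_{\partial U}(\vec g\circ\vec u)\cdot(\cof D\vec u\,\vec\nu)\,\dd\mathcal H^2
\]
for every $\vec g\in C^\infty(\R^3,\R^3)$, as in \cite[Sect.\ 8]{HeMo11} (see also \cite{HeMo15}). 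By the area formula (Proposition \ref{prop:area-formula}) and $\det D\vec u>0$, the left-hand side equals $\int_{\R^3}(\div\vec g)(\vec y)\,\mathcal N(\vec u,\O_d\cap U,\vec y)\,\dd\vec y$, while by Definition \ref{prop:degree} the right-hand side equals $\int_{\R^3}\deg(\vec u,U,\vec y)\,\div\vec g(\vec y)\,\dd\vec y$. As $\mathcal N(\vec u,\O_d\cap U,\cdot)$ and $\deg(\vec u,U,\cdot)$ are both $L^1$ functions supported in the compact set $\vec u(\overline U)$, taking $\vec g=\nabla\psi$ with $\psi\in C^\infty_c(\R^3)$ shows that their difference is a compactly supported harmonic distribution, hence $0$; thus $\deg(\vec u,U,\cdot)=\mathcal N(\vec u,\O_d\cap U,\cdot)$ a.e. Finally, $\imG(\vec u,U)=\vec u(\O_0\cap U)$ coincides a.e.\ with $\vec u(\O_d\cap U)$ (because $|\O_d\setminus\O_0|=0$ and $\vec u|_{\O_d}$ satisfies (N), Lemma \ref{le:Lusin}), which is $\{\vec y:\mathcal N(\vec u,\O_d\cap U,\vec y)\ge1\}$, which a.e.\ equals $\{\vec y:\deg(\vec u,U,\vec y)\ne0\}$ (since $\deg=\mathcal N\ge0$ a.e.\ and the classical and Brezis--Nirenberg degrees agree on $U$, as $\mathcal H^3(\vec u(\partial U))=0$ by (N))---that is, $\imT(\vec u,U)$.

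For (c), the implication that (INV) in $\widetilde\Omega\setminus L$ forces a.e.\ injectivity there---hence, since $|L|=0$, in $\widetilde\Omega$---follows, once the degree identity of (b) is available, exactly as in \cite{MuSp95} (see also \cite{HeMo12,HeMo15}). Conversely, assume $\vec u$ is injective a.e.\ and fix a full-measure set $\Omega'\subset\O_d$ on which $\vec u$ is injective. Let $\vec x_0\in\widetilde\Omega\setminus L$; for a.e.\ $r$ small enough that $\overline{B(\vec x_0,r)}\subset\widetilde\Omega\setminus L$ we may assume, by Lemma \ref{le:enough_good_open_sets}, that $B:=B(\vec x_0,r)\in\mathcal U_{\vec u}$, so that (b) applies to $B$. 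Condition (a) of (INV) holds because for a.e.\ $\vec x\in B$ one has $\vec x\in\O_0$ and hence $\vec u(\vec x)\in\vec u(\O_0\cap B)=\imG(\vec u,B)$, which agrees a.e.\ with $\imT(\vec u,B)$; since $\vec u$ satisfies (N$^{-1}$) (Lemma \ref{le:Lusin}), the $\vec u$-preimage of the null set $\imG(\vec u,B)\,\triangle\,\imT(\vec u,B)$ is null, so $\vec u(\vec x)\in\imT(\vec u,B)$ for a.e.\ $\vec x\in B$. For condition (b) of (INV), properties (a)--(b) of $\mathcal U_{\vec u}$ give $\mathcal H^2(\vec u(\O_d\cap\partial B))\le\int_{\partial B}|(\cof D\vec u)\,\vec\nu|\,\dd\mathcal H^2<\infty$, whence $\mathcal L^3(\vec u(\partial B))=0$ and, for a.e.\ $\vec y$,
\[
 \mathcal N(\vec u,\O_d\cap\widetilde\Omega,\vec y)=\deg(\vec u,B,\vec y)+\mathcal N(\vec u,\O_d\cap(\widetilde\Omega\setminus\overline B),\vec y)\le1,
\]
the inequality holding a.e.\ because $\vec u$ is injective a.e.\ (and $\vec u$ sends the null set $\O_d\setminus\Omega'$ to a null set by (N)). Since $\deg(\vec u,B,\cdot)\ge1$ a.e.\ on $\imT(\vec u,B)$, this forces $\mathcal N(\vec u,\O_d\cap(\widetilde\Omega\setminus\overline B),\cdot)=0$ a.e.\ on $\imT(\vec u,B)$, so by the area formula and $\det D\vec u>0$ one gets $\vec u(\vec x)\notin\imT(\vec u,B)$ for a.e.\ $\vec x\in\widetilde\Omega\setminus\overline B$, hence (as $\mathcal L^3(\partial B)=0$) for a.e.\ $\vec x\in\widetilde\Omega\setminus B$. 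Therefore $\vec u$ satisfies (INV) in $\widetilde\Omega\setminus L$, and the ``in particular'' is immediate since maps in $\mathcal A_s$ are injective a.e.\ by definition.

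I expect the main difficulty to be the integration by parts in (b): passing from the divergence identities in the weak form $\overline{\E}_{\vec u}(\phi,\vec g)=0$ for $\phi\in C^1_c(U)$ to the boundary formula on $U$, which is precisely where the full strength of the approximation properties (c)--(d) in the definition of $\mathcal U_{\vec u}$ enters; and, in (c), the careful tracking of the various ``a.e.'' quantifiers together with the treatment of the contribution of $\partial B$.
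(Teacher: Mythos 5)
Your proposal is correct and follows essentially the same route as the paper, which dispatches the lemma by citing Lemma \ref{N condition}, \cite[Lemma~4.8]{MoOl19} for part (a), and \cite[Th.~4.1 and Lemma~5.1(a)]{BaHeMo17} for parts (b)--(c); you have simply unfolded those references. The gluing argument over $\widetilde\Omega\setminus\R\vec e_3$ and $\Omega_D$ in (a), the identification of $\deg$ with $\mathcal N$ via the area formula and the boundary divergence identity in (b), and the preimage-counting decomposition across $\partial B$ in (c) are precisely the mechanisms invoked by the cited sources, and your handling of the $L^1$ vs.\ a.e.\ null sets (via Lusin's (N) and (N$^{-1}$)) is careful and accurate.
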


\begin{proof}
Part (a) follows from Lemma \ref{N condition} and \cite[Lemma 4.8]{MoOl19}. Parts (b) and (c) can be obtained with the same proof of \cite[Th.\ 4.1 and Lemma 5.1.(a)]{BaHeMo17}.
\end{proof}

Recall that $\Om_{\vec b}:=\vec b(\Om)$ and $\widetilde \Om_{\vec b}:=\vec b(\widetilde \Om)$.

\begin{definition}\label{def:imT(u,L)}
Let \(\vec u \in \As \) and let 
$\mathcal{U}_{\vec u}^s:=\{U\in\mathcal{U}_{\vec u} \text{ is axisymmetric
and } U\Subset \widetilde \Omega \setminus \R \vec e_3\}$.
\begin{enumerate}[a)]
\item We define the topological image of $\widetilde{\O} \setminus L$ by $\vec u$ as

\[
 \imT (\vec u, \widetilde{\O} \setminus L) 
 :=\vec b(\O_D) \cup \bigcup_{\substack{U\in \mathcal{U}_{\vec u}^s}}
 	\imT (\vec u, U) .
\]

\item We define the topological image of $L$ by $\vec u$ as
\begin{equation*}
\imT(\vec u,L):=\widetilde \Om_{\vec b} \setminus \imT (\vec u,\widetilde{ \O} \setminus L) .
\end{equation*}
\end{enumerate}
\end{definition}

\begin{lemma}\label{le:contained_in_target_domain}
If $\vec u\in \mathcal A_s$, then: 
\begin{enumerate}[(a)]
\item
$\vec u(\vec x)\in \overline{\Omega_{\vec b}}$ for every $\vec x \in \Omega \setminus L$, 
\item 	
$\imT(\vec u, \widetilde \Omega \setminus L)
\subset \widetilde \Omega_{\vec b}$, and
\item\label{eq geo=top}
$ \imT (\vec u, \widetilde \O \setminus L) = \imG (\vec u, \widetilde \O \setminus L)=\imG(\vec u,\widetilde \O) \text{ a.e.}$
\end{enumerate}
\end{lemma}

\begin{proof}
Part (a): the map $\vec u$ cannot send a point $\vec x\in \O\setminus L$ in 
$\vec b(\O_D)$. Indeed, by continuity, a ball centered at $\vec x$ should be mapped into $\vec b(\O_D)$, against the fact that $\vec u$ is injective a.e.

Let us show that $\vec u$ cannot send a point $\vec x\in \O\setminus L$ 
outside $\widetilde \Omega_{\vec b}$. Observe that $\O_D$ wraps $\O$: for $\eta>0$ small enough, $\{\vec x \in \R^3 \setminus \overline{\Omega} 
\colon \dist(\vec x,\overline{\Omega}) < \eta\big \} \subset \O_D$.
Since $\vec b$ is a homeomorphism, $\vec b(\O_D)$ wraps $\O_{\vec b}$: for $\eta>0$ small enough, $\{\vec y \in \R^3 \setminus \overline{\Omega_{\vec b}} 
\colon \dist(\vec y,\overline{\Omega_{\vec b}}) < \eta\big \} \subset \vec b(\O_D)$.
Therefore, in order to  `exit' from $\Omega_{\vec b}$ one has to pass through 
$\vec b(\O_D)$.

Given $\vec x \in \Omega \setminus L$, let $\gamma$ be a continuous curve with endpoints $\vec x$ and $\vec x'\in \partial \O$ such that 
$\gamma\setminus\{\vec x'\}\subset\O \setminus L$. 
We have that $\vec u(\gamma)$ is connected since $\vec u$ is continuous in 
$\widetilde\Omega \setminus L$. 
Moreover, $\vec u(\vec x')\in \vec u(\partial\O)\Subset\widetilde \Om_{\vec b}$.
If $\vec u(\vec x)\notin\widetilde\Omega_{\vec b}$, then $\vec u(\gamma)$ has
to cross $\vec b(\O_D)$.
This implies that $\vec u$ maps at least one point $\vec x''\in \O$ in $\vec b(\O_D)$.
This contradicts what was proved at the beginning, finishing the proof of (a).
	
\smallskip	
Part (b): let $\vec v$ be the planar function corresponding to $\vec u$. Given $U\in \mathcal{U}_{\vec u}^s$, let $V\in \mathcal{U}_{\vec v}$ be such that 
and $\pi(U)\subset V\Subset\pi(\widetilde \Omega \setminus \R \vec e_3)$. 
Thanks to what was observed in Subsection \ref{top image classic}, $\imT(\vec v,V)\subset \vec v(V)$. Moreover, by Lemmas  \ref{le:imTmonotone} and \ref{le:imTuimTv} we have
$$\pi (\imT(\vec u, U))=\imT(\vec v,\pi(U)) \subset\imT(\vec v,V).$$
Therefore, by (a), $\imT(\vec u, U)\subset \widetilde \Omega_{\vec b}$.

\smallskip	
Part (c): for any $k\in\N$, let $U_k\in \mathcal{U}_{\vec u}^s$ be an axisymmetric open set containing
$$\{\vec x\in\widetilde \O 
\colon \dist(\vec x, \R \vec e_3)>1/k \}$$
and such that $V_k:=\pi(U_k)\in \mathcal{U}_{\vec v}$.
Any set $U\in\mathcal{U}_{\vec u}^s$ is included in some $U_k$.
Moreover, since by Lemma \ref{le:imTmonotone}
$\imT( \vec v, \pi(U))\subset\imT( \vec v, V_k)$, by Lemma \ref{le:imTuimTv} we have
$\imT(\vec u,U)\subset \imT(\vec u, U_k)$. This proves that
$$\bigcup_{\substack{U\in \mathcal{U}_{\vec u}^s}}
\imT (\vec u, U)=\bigcup_{k \in \N} \imT (\vec u, U_k).$$
By Lemma \ref{INV3}, $\imT (\vec u, U_k) = \imG (\vec u,U_k)$ a.e.\ for all $k\in\N$.
Hence, since $|L| = 0$,
\begin{equation*}\begin{split}
\imG(\vec u,\widetilde \O)
&\overset{\text{a.e.}}{=} \imG (\vec u, \widetilde \O \setminus L) = \vec b(\O_D) \cup\bigcup_{k \in \N} \imG (\vec u, U_k)\\
&\overset{\text{a.e.}}{=} 
\vec b(\O_D) \cup\bigcup_{k \in \N} \imT (\vec u, U_k)
=
\imT (\vec u, \widetilde \O \setminus L),
\end{split}\end{equation*}
where Lemma \ref{le:Lusin} was used in the first equality.
\end{proof}

Note that $\imT (\vec u, \widetilde{\O} \setminus L)$ is open as a union of open sets  and hence \(\imT( \vec u, L)\) is closed.
Also, by Lemma \ref{le:contained_in_target_domain}.(b),
\begin{align}
		\label{eq:created_surface_is_contained}
	\imT(\vec u, \widetilde \Omega \setminus L)
	= \widetilde \Omega_{\vec b} \setminus \imT(\vec u, L).
\end{align}
For example, in the construction of 
Conti \& De Lellis \cite{CoDeLe03}
$\imT(\vec u, L)$ consists (apart from the corresponding segment in the symmetry axis)
of the sphere 
$\partial B \big ( (0,0,\frac{1}{2}), \frac{1}{2} \big )$,
which may be regarded as new surface inside 
the elastic body created by the singular map $\vec u$.

\subsection{Sobolev regularity of inverses}\label{sob reg inverse}

As $\vec b : \tilde{\O} \to \R^3$ is a diffeomorphism, $\Om_{\vec b} = \imT(\vec b,\Om)$.
Moreover, for \(\vec u \in \As\), as $\vec u = \vec b$ in $\O_D$, 
the traces of $\vec u$ and $\vec b$ on $\p \O$ coincide.
As $\vec b |_{\p \O}$ is continuous, it is a representative of $\vec u|_{\p \O}$, hence the degree $\deg (\vec u, \O, \cdot)$ is defined and equals $\deg (\vec b, \O, \cdot)$.
In particular,
\begin{equation}\label{eq:Om'}
 \Om_{\vec b} =\vec b(\Om) = \imT(\vec b,\Om) = \imT(\vec u,\Om),
 \qquad
 \widetilde\Om_{\vec b}
 	= \imT(\vec u, \widetilde\Om).
\end{equation}

Let $\Omega_0$ be the set of Definition \ref{def:Om0}. 
Note that since maps in $\mathcal{A}_s$ are defined in $\widetilde \Omega$,
then $\Omega_0$ also contains points outside $\Omega$; in fact, it is of full measure in $\widetilde \Omega$.  
It was proved in \cite[Lemma 3]{HeMo11} that if $\vec u$ is one-to-one a.e.\ then $\vec u |_{\Omega_0}$ is injective.

\begin{definition}\label{inverse generic}
	Let $\vec u\in \mathcal A_s$. We 
	define its inverse as the map
	$\vec u^{-1}: \imG(\vec u, \Omega)\to \R^3$
	that sends every $\vec y\in \imG(\vec u, \Omega)$ to the only  $\vec x\in \Omega_0$ such that $\vec u(\vec x)=\vec y$. 
\end{definition}

Let $A$ be a bounded domain of $\R^2$ and let $\vec v\in H^1(A,\R^2)$
be injective a.e.\ and such that $\det D \vec v > 0$ a.e. 
From the comments in Subsection \ref{top image classic}, given $V\in  \mathcal{U}_{\vec v}$, we can define on $\imT(\vec v,V)$
$$\vec v^{-1} (\vec y) = \text{any element of } G (\vec y).$$
The definition is well posed; moreover, for every $\vec y\in \imT(\vec v,V)\setminus T$
we have
$\vec y= \vec v(\vec v^{-1} (\vec y))$.
Finally, by \cite[Lemma 6]{Sverak88} we have $\vec v^{-1}$ is continuous at any point of $\imT(\vec v,V)\setminus T$, 
while by \cite[Th.\ 8]{Sverak88} we have $\vec v^{-1}\in W^{1,1}(\imT(\vec v,V),\R^2)$ and
\begin{equation*}
 D \vec v^{-1} (\vec v (\vec x )) = D \vec v (\vec x)^{-1} = \frac{\adj D \vec v (\vec x)}{\det D \vec v (\vec x)}
 \quad \text{for a.e.\ } \vec x \in V.
\end{equation*}

All the properties above can be transposed to the axisymmetric case.
Let \(\Om\) be an axisymmetric domain, $ \vec u\in H^1 (\Om, \R^3)$ be axisymmetric, and let $\vec v$ be its corresponding planar function. 
Assume that $\vec u$ is injective a.e. and that $\det D \vec u > 0$ a.e.\  in \(\O\). By Lemma \ref{le:uvinj} the same
properties hold for $\vec v$ in $\pi(\O)$. Moreover, by Lemma \ref{le:uvSobolev}, if $U \subset \O$ is an axisymmetric  open set such 
that $\overline{U} \cap \R \vec e_3 =\varnothing$, then $\vec v\in H^1(\pi(U),\R^2)$.
Thanks to Lemma \ref{N condition}, we will consider a representative of $\vec u$ that is continuous at each point of $\Om\setminus \R \vec e_3$, 
and a representative of $\vec v$ that is continuous at each point of $\pi(\Om) \setminus (\{ 0\} \times \R)$. 
We have  $\pi(U)\subset V$ for some $V\in\mathcal{U}_{\vec v}$ (using the analogue of Lemma \ref{le:enough_good_open_sets} in $2D$). 
Let $\vec v^{-1}$ be the map defined in $\imT(\vec v,V)$ as above. Recalling that by Lemma \ref{le:imTuimTv}, 
$\imT(\vec v,\pi(U)) = \pi (\imT(\vec u, U))$, while by Lemma \ref{le:imTmonotone},
$\imT(\vec v,\pi(U))\subset\imT(\vec v,V)$, we can define on $\imT(\vec u, U)$
a map $\vec u^{-1}$ through formula \eqref{eq:uv}, changing $\vec u, \vec v$ by $\vec u^{-1}, \vec v^{-1}$, respectively. 
Let $R$ be the axisymmetric set such that $\pi(R)=T$; we have $\mathcal{H}^2(R)=0$ and for every $\vec y\in \imT(\vec u,U)\setminus R$ we have
$\vec y= \vec u(\vec u^{-1} (\vec y))$. 
Since by \eqref{eq:deg_positive}, $\imG (\vec u, U) = \imT(\vec u, U)\ \text{a.e.}$, 
if $\vec u\in \mathcal A_s$, then $\vec u^{-1}$ in $\imT(\vec u,U)$ is a specific representative of the inverse defined in Definition \ref{inverse generic}.
As in Lemma \ref{N condition}, we obtain that $\vec u^{-1}$ is continuous at any point of $\imT(\vec u, U)\setminus R$. Moreover, as in Lemma \ref{le:uvSobolev}, we obtain
that $\vec u^{-1}\in W_{\loca}^{1,1}(\imT(\vec u,U),\R^3)$ and
\begin{equation}\label{eq:formulaDu-1}
 D \vec u^{-1} (\vec u (\vec x )) = D \vec u (\vec x)^{-1} = \frac{\adj D \vec u (\vec x)}{\det D \vec u (\vec x)}
 \quad \text{for a.e.\ } \vec x \in U.
\end{equation}

\begin{lemma}\label{le:u-1}
Let $\vec u \in \As$. Then $\vec u^{-1} \in W^{1,1} (\widetilde \O_{\vec b} \setminus \imT(\vec u,L), \R^3)$ and formula \eqref{eq:formulaDu-1} holds for a.e.\ 
$\vec x \in \widetilde \O$.
\end{lemma}

\begin{proof}
Let $\{U_k\}_{k\in\N}$ be a sequence in $\mathcal{U}_{\vec u}^s$  as in the proof of Lemma \ref{le:contained_in_target_domain}.
From what we wrote before, $\vec u^{-1} \in W^{1,1} (\imT (\vec u, U_k), \R^3)$, and
this implies that $\vec u^{-1} \in W^{1,1}_{\loc}(\imT (\vec u, \widetilde \O \setminus L),\R^3)$. 
Using \eqref{eq:created_surface_is_contained}	we find that 
$\vec u^{-1} \in W^{1,1}_{\text{loc}} (\widetilde \O_{\vec b} \setminus \imT(\vec u,L), \R^3)$.	
Moreover, formula \eqref{eq:formulaDu-1} holds for a.e.\ $\vec x \in U_k$, from which we get immediately that it also holds for a.e.\ $\vec x \in \widetilde \O$.
Using that formula, and a change of variables,
we find that
\[
 \int_{\imT (\vec u, \widetilde \O \setminus L)} | D \vec u^{-1} (\vec y) | \, \dd \vec y = \int_{\widetilde \O} \left| \cof D \vec u (\vec x) \right| \dd \vec x < \infty
\]
since $\vec u \in H^1 (\widetilde \O, \R^3)$.
Therefore, 
$\vec u^{-1} \in W^{1,1}(\imT (\vec u, \widetilde \O \setminus L),\R^3)$.
\end{proof}

We now prove that when $\vec u$ has zero surface energy in $\widetilde \Omega$
then also the geometric image of $\widetilde \Omega$ (not only its topological image)
coincides with $\widetilde {\Omega}_{\vec b}$ (up to a Lebesgue-null set).
The first step for the proof is
to establish \eqref{eq:deg_positive}
also for open sets $U$ enclosing the singular segment $L$.

\begin{proposition}\label{INV}
Suppose that $\vec u \in H^1(\widetilde \Omega, \R^3)$ is axisymmetric 
and satisfies $\det D\vec u>0$ a.e., $\vec u = \vec b$ in $\Omega_D$,
and $\mathcal E(\vec u)=0$ in $\widetilde \Omega$. Then 
\begin{enumerate}[(a)]
\item
$\vec u\in L^\infty(\Omega,\R^3)$,
$\Det D \vec u = \det D \vec u$,
and $\vec u$ is injective a.e.
\item $\imG(\vec u, \widetilde \Omega) = {\widetilde \Omega}_{\vec b}$ a.e.
\item
For any $U\in \mathcal{U}_{\vec u}$,
	\begin{equation}\label{eq:imG=imT}
	\deg (\vec u, U, \cdot) = 
	\chi_{\imG(\vec u, U)} \ \text{ a.e.}
	\end{equation}
	In particular, when $\partial U \cap L=\varnothing$ (so that the classical degree and the topological image is well defined),
	\begin{equation} \label{eq:imG=imT2}
 \imG (\vec u, U) = \imT(\vec u, U)\ \text{a.e.}
	\end{equation}
\end{enumerate}
\end{proposition}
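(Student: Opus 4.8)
The plan is to exploit that the divergence identities are now assumed to hold on \emph{all} of $\widetilde\Omega$, not merely away from $L$ as in Lemma \ref{INV3}; this is exactly what will let me extend the degree--multiplicity identity \eqref{eq:deg_positive} to good open sets enclosing or crossing the singular segment $L$. Much of the apparatus (the integral formula for the degree, the Brezis--Nirenberg degree, the identification $\imG=\imT$, and the measure-theoretic bookkeeping with $\Omega_0$, $\Omega_d$ and the area formula) is that of \cite{MuSp95,CoDeLe03,HeMo15,BaHeMo17}, so I would only highlight the modifications.

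\emph{Step 1: the $L^\infty$ bound and $\Det D\vec u=\det D\vec u$.} Using Lemma \ref{le:enough_good_open_sets} I would pick $U_0\in\mathcal U_{\vec u}$ with $L\subset U_0\Subset\widetilde\Omega$. Then $\partial U_0\cap L=\varnothing$, so $\vec u$ is continuous (hence bounded) on $\partial U_0$ by Lemma \ref{INV3}(a) and the classical degree $\deg(\vec u,U_0,\cdot)$ is a compactly supported $L^1$ function. Because $U_0\in\mathcal U_{\vec u}$, conditions (c)--(d) of Definition \ref{def:good_open_sets} allow integrating $\mathrm{Div}\,\big((\adj D\vec u)(\vec g\circ\vec u)\big)=(\div\vec g)(\vec u)\det D\vec u$ over $U_0$; combining this with \eqref{def:degree_integral} and the area formula (Proposition \ref{prop:area-formula}, using $\det D\vec u>0$ a.e.) gives, for every $\vec g\in C^1_c(\R^3,\R^3)$,
\begin{align*}
 \int_{\R^3}\deg(\vec u,U_0,\vec y)\,\div\vec g(\vec y)\,\dd\vec y
 & =\int_{\partial U_0}(\vec g\circ\vec u)\cdot(\cof D\vec u\,\vec\nu)\,\dd\mathcal H^2 \\
 & =\int_{\R^3}\div\vec g(\vec y)\,\mc N(\vec u,\Omega_d\cap U_0,\vec y)\,\dd\vec y .
\end{align*}
Hence $\deg(\vec u,U_0,\cdot)-\mc N(\vec u,\Omega_d\cap U_0,\cdot)$ has vanishing distributional gradient and so equals a constant $c$; since $\deg(\vec u,U_0,\cdot)$ is compactly supported while $\mc N\ge0$ has $\int_{\R^3}\mc N(\vec u,\Omega_d\cap U_0,\cdot)=\int_{U_0}\det D\vec u<\infty$, I would conclude $c=0$. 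Thus $\mc N(\vec u,\Omega_d\cap U_0,\cdot)$ is compactly supported, i.e.\ $\vec u(\Omega_d\cap U_0)$ is essentially bounded; together with the boundedness of $\vec u$ on the compact set $\overline{\widetilde\Omega}\setminus U_0$ (where $\vec u$ avoids $L$, hence is continuous, and equals $\vec b$ on $\partial\widetilde\Omega$) and the full measure of $\Omega_d$, this yields $\vec u\in L^\infty(\widetilde\Omega,\R^3)$. Then $\Det D\vec u=\det D\vec u$ is the particular case of $\mathcal E(\vec u)=0$ obtained by testing $\overline{\mathcal E}_{\vec u}(\phi,\vec g)=0$ with $\vec g$ compactly supported and equal to the identity on a neighbourhood of $\spt\phi\cup\overline{\vec u(\spt\phi)}$.

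\emph{Step 2: injectivity, (b) and (c).} I would take a nested sequence $\{\Omega'_k\}\subset\mathcal U_{\vec u}$ with $\overline\Omega\subset\Omega'_k\Subset\Omega'_{k+1}$ and $\bigcup_k\Omega'_k=\widetilde\Omega$. Since $L\subset\Omega\subset\Omega'_k$, each $\partial\Omega'_k$ avoids $L$ and $\vec u=\vec b$ on it, so $\deg(\vec u,\Omega'_k,\cdot)=\deg(\vec b,\Omega'_k,\cdot)=\chi_{\vec b(\Omega'_k)}$ a.e.\ ($\vec b$ being an orientation-preserving diffeomorphism). Repeating Step~1 with $\Omega'_k$ in place of $U_0$ gives $\mc N(\vec u,\Omega_d\cap\Omega'_k,\cdot)=\chi_{\vec b(\Omega'_k)}\le1$ a.e., so $\vec u$ is injective on $\Omega_d\cap\Omega'_k$ up to an $\mc L^3$-null set, which can be discarded using Lusin's (N$^{-1}$) condition (Lemma \ref{le:Lusin}); letting $k\to\infty$ shows $\vec u$ is injective a.e. For (b), since also $\mc N(\vec u,\Omega_0\cap\Omega'_k,\cdot)=\chi_{\imG(\vec u,\Omega'_k)}$ and the two multiplicity functions agree a.e.\ (Lusin's (N) on $\Omega_d$), I get $\imG(\vec u,\Omega'_k)=\vec b(\Omega'_k)$ a.e., and a countable union over $k$ gives $\imG(\vec u,\widetilde\Omega)=\vec b(\widetilde\Omega)=\widetilde\Omega_{\vec b}$ a.e. For (c), let $U\in\mathcal U_{\vec u}$ be arbitrary; since $\vec u\in L^\infty(\widetilde\Omega)$ and $\vec u$ is continuous off the $\mathcal H^2$-null set $\partial U\cap L$, the trace $\vec u|_{\partial U}$ lies in $W^{1,2}(\partial U,\R^3)\cap L^\infty(\partial U,\R^3)$, so the Brezis--Nirenberg degree $\deg(\vec u,U,\cdot)$ is defined, compactly supported (approximating $\vec u|_{\partial U}$ by smooth maps with a uniform $L^\infty$ bound) and in $L^1$, and coincides with the classical degree when $\partial U\cap L=\varnothing$. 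The same computation as in Step~1, now with Definition \ref{prop:degree}, gives $\deg(\vec u,U,\cdot)=\mc N(\vec u,\Omega_d\cap U,\cdot)$ a.e., and by the injectivity just proved together with Lusin's (N), $\mc N(\vec u,\Omega_d\cap U,\cdot)=\chi_{\imG(\vec u,U)}$ a.e., which is \eqref{eq:imG=imT}; when $\partial U\cap L=\varnothing$ the degree is $\{0,1\}$-valued and $\imT(\vec u,U)=\{\deg(\vec u,U,\cdot)\neq0\}$, so $\chi_{\imT(\vec u,U)}=\deg(\vec u,U,\cdot)=\chi_{\imG(\vec u,U)}$ a.e., which is \eqref{eq:imG=imT2}.

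I expect the main obstacle to be the rigorous passage from the distributional identity $\mathcal E(\vec u)=0$ \emph{in $\widetilde\Omega$} to the boundary identity $\int_{U}(\div\vec g)(\vec u)\det D\vec u=\int_{\partial U}(\vec g\circ\vec u)\cdot(\cof D\vec u\,\vec\nu)\,\dd\mathcal H^2$ for good open sets $U$ crossing or enclosing $L$: the genuinely singular behaviour of $\vec u$ near the axis has to be absorbed into conditions (c)--(d) of Definition \ref{def:good_open_sets}, and one must check that the smooth approximation of $\vec u|_{\partial U}$ underlying the Brezis--Nirenberg degree is compatible both with that integration by parts and with the area formula, along the lines of \cite{MuSp95,HeMo15,BaHeMo17}. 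A secondary delicate point will be the compact support and $L^1$-boundedness of the Brezis--Nirenberg degree, which is precisely what excludes a nonzero additive constant $c$ above.
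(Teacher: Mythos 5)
Your proposal is correct and follows essentially the same strategy as the paper, which simply defers to the proof of \cite[Th.~4.1]{BaHeMo17} and explains the minor adaptations: since $\mathcal E(\vec u)=0$ now holds in all of $\widetilde\Omega$, the constancy argument $\deg(\vec u,U,\cdot)-\mathcal N(\vec u,\Omega_d\cap U,\cdot)=c$ extends to good open sets enclosing or meeting $L$, and one then identifies $c=0$, deduces $L^\infty$ and injectivity from $\deg(\vec u,\cdot,\cdot)=\deg(\vec b,\cdot,\cdot)$ on sets containing $\Omega$, and handles arbitrary $U$ via the Brezis--Nirenberg degree. The only cosmetic differences are that you pin down $c=0$ by combining compact support of the degree with nonnegativity and integrability of $\mathcal N$ (the paper instead observes both $\chi_{\imG}$ and the degree vanish outside a ball once $\vec u\in L^\infty$), and that you obtain $L^\infty$ from the compact support of $\deg(\vec u,U_0,\cdot)$ for any good $U_0\supset L$ rather than from $\imT(\vec u,U)=\vec b(U)\subset\widetilde\Omega_{\vec b}$ for $\Omega\Subset U$; both are sound, and the obstacle you flag (passing from $\mathcal E(\vec u)=0$ to the flux identity on $\partial U$ via conditions (c)--(d) of Definition~\ref{def:good_open_sets}) is exactly what the cited references handle.
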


\begin{proof}
The results were proved in \cite[Th.\ 4.1]{BaHeMo17} for maps $\vec u \in W^{1,p}$ with $p>2$. 
Here we only explain the very minor modifications needed for the generalization to our $H^1$ setting. 
Now that we have $\mathcal E(\vec u)=0$ in $\widetilde \Omega$ (as opposed to Lemma \ref{INV3} where 
essentially any axisymmetric map was considered, so that $\mathcal E(\vec u)=0$ only in $\widetilde \Omega \setminus L$),
arguing exactly as in \cite[Th.\ 4.1]{BaHeMo17} we obtain both \eqref{eq:imG=imT2} and
\begin{equation}
		\label{eq:imG=imT99}
	\deg (\vec u, U, \cdot) =
	 \mc{N}(\vec u,\O_d \cap U, \cdot) \ \text{ a.e.},
	\end{equation}
where $\Omega_d$ is the set of approximate differentiability, for any $U\in \mathcal {U}_{\vec u}$
such that $\partial U \cap L =\varnothing$ (as opposed to Lemma \ref{INV3} where 
the stronger restriction $\overline U \cap L =\varnothing$ was imposed).
In particular, applying \eqref{eq:imG=imT99}  to any $U\in \mathcal{U}_{\vec u}$ such that $\Omega \Subset U \Subset \widetilde \Omega$, 
we find that for a.e.\ $\vec x \in \Omega_0$
$$
	\vec u(\vec x)\in \imG(\vec u, U) 
	\overset{\text{a.e.}}{=}\imT(\vec u, U)=\imT(\vec b, U)\subset \widetilde {\Omega}_{\vec b},
$$
thus proving the $L^\infty$ bound. In addition,
$$
	\mc{N}(\vec u,\O_d \cap U, \cdot)
	\overset{\text{a.e.}}{=} \deg (\vec u, U, \cdot)
	= \deg (\vec b, U, \cdot ).
$$
As
$\vec b$ is an orientation-preserving diffeomorphism,
\[
 \deg (\vec b, U, \cdot) = \begin{cases}
 1 & \text{in } \vec b (U) , \\
 0 & \text{in } \R^3 \setminus \vec b (\overline{U}) , \\
 \text{undefined} & \text{in } \vec b (\p U)
 \end{cases}
\]
and $\vec b (\p U)$ has measure zero. We conclude that $\mc{N}(\vec u,\O_d \cap U, \cdot)
	=\deg (\vec b, U, \cdot) = \chi_{\vec b (U)}$ a.e.,
which implies that 
$\vec u$ is injective a.e.\ in $U$.
As this is true for all $U \in \mc{U}_{\vec u}$ with $\O \Subset U$, and $\widetilde{\O}$ can be written as the union of countably 
many such $U$, we conclude that $\vec u$ is injective a.e.\ and $\imG (\vec u, \widetilde{\O}) = \vec b (\widetilde{\O})$ a.e.
Since the $L^\infty$ bound has already been established, the identity $\Det D\vec u=\det D\vec u$ can be proved exactly as in \cite[Th.\ 4.1]{BaHeMo17}.

Take now an arbitrary $U\in \mathcal U_{\vec u}$ (on whose boundary $\vec u$ is not necessarily continuous).
Proceeding as in \cite[Thm.~4.1]{BaHeMo17} one still obtains that there exists $c\in \Z$ such that 
$$\mc{N}(\vec u, \O_d\cap U, \cdot )-\deg(\vec u, U, \cdot)=c
\quad\text{a.e.},$$
where $\deg (\vec u, U, \cdot)$ is now the Brezis--Nirenberg degree
(see Definition \ref{prop:degree} and the remark after it).
Since $\vec u$ is injective a.e.\@, the first term coincides a.e.\@ (see Lemma \ref{le:Lusin})
with $\chi_{\imG(\vec u, U)}$.
As $\vec u \in L^{\infty} (\O, \R^3)$, there exists a set $N \subset \O$ of measure zero such that $\vec u (\O \setminus N) \subset B (\vec 0, \| \vec u \|_{L^{\infty}})$.
Therefore, $\imG (\vec u, \O) \subset B (\vec 0, \| \vec u \|_{L^{\infty}}) \cup \vec u (\O_0 \cap N)$.
The set $\vec u (\O_0 \cap N)$ has measure zero thanks to Lemma \ref{le:Lusin}.
Thus, $\chi_{\imG(\vec u, U)}= 0$ a.e.\ outside $\R^3 \setminus B (\vec 0, \| \vec u \|_{L^{\infty}})$.
By Definition \ref{prop:degree}, $\deg (\vec u, \p U, \cdot)= 0$ a.e.\ outside $B (\vec 0, \| \vec u \|_{L^{\infty}})$.
Consequently, $c = 0$ and \eqref{eq:imG=imT} holds.
\end{proof}

Next, we show that the inverse of a map $\vec u$ in $\mathcal{A}_s^r$ has $W^{1,1}$ regularity. 
This follows essentially from \cite[Th.\ 3.4]{HeMo15};
nevertheless, some clarifying statements are in order due to the potential lack of continuity of \(\vec u\) on \(L\). 

\begin{proposition}\label{INV2}
Let $\vec u \in \Asr$. Then $\vec u^{-1} \in W^{1,1} (\widetilde \Omega_{\vec b}, \R^3)$.
\end{proposition}

\begin{proof}
Following \cite{CoDeLe03}, in the $H^1$ setting the sets 
$$A_{\vec u,U}:= \{\vec y\in \R^3: \deg( \vec u, U ,\vec y)\neq 0 \}$$
are given an auxiliary role, 
the actual topological images being now defined as
$$
	\imTBN(\vec u, U):=\{\vec y\in \R^3:
	D\big ( A_{\vec u, U}, \vec y\big ) =1 \}.
$$
The superscript `BN' has been added to indicate that use is made of the Brezis--Nirenberg degree. 
That notation and definition will only appear in this proof. 

\medskip

\underline{Part I: if $\vec u\in \Asr$ then for any $U\in \mathcal{U}_{\vec u}$,}
\begin{equation} \label{eq:imG=imT-BN}
	\imG (\vec u, U) = \imTBN (\vec u, U)\ \text{a.e.} 
\end{equation}

The proof consists in recalling \eqref{eq:imG=imT}, which implies that 
\begin{equation}\label{eq:imGAuU}
 \imG (\vec u, U) = A_{\vec u,U} \quad \text{a.e.}
\end{equation}
Now, by Lebesgue's differentiation theorem,
\[
 \imG (\vec u, U) = \{ \vec y \in \R^3 : D (\imG (\vec u, U), \vec y) = 1 \} \quad \text{a.e.}
\]
Using \eqref{eq:imGAuU} as well, we conclude \eqref{eq:imG=imT-BN}.
\medskip

\underline{Part II: maps in $\mathcal{A}_s^r$ satsify INV in the whole $\widetilde \Omega$.}
Let $U \in \mathcal{U}_{\vec u}$ and assume that $\vec u|_{U\setminus N}$ is injective for some set $N \subset U$ of measure zero. 
Take $\vec a \in U$ and define $r_{\vec a} := \dist (\vec a, \p U)$.
Then $B (\vec a, r) \in \mc{U}_{\vec u|_{U}}$ for a.e.\ $r \in (0, r_{\vec a})$.
Fix any such $r$.
For all $\vec x \in B (\vec a, r) \cap \O_0$ we have that $\vec u (\vec x) \in \imG (\vec u, B (\vec a, r))$.
By \eqref{eq:imG=imT-BN} and Lemma \ref{le:Lusin}, we infer that $\vec u (\vec x) \in \imTBN (\vec u, B (\vec a, r))$ for a.e.\ $\vec x \in B (\vec a, r)$.
Now, for all $\vec x \in U \setminus (B(\vec a, r) \cup N)$, by the injectivity, $\vec u (\vec x) \notin \imG (\vec u, B(\vec a, r))$.
As before, $\vec u (\vec x) \notin \imTBN (\vec u, B(\vec a, r))$ for a.e.\ $\vec x \in U \setminus B(\vec a, r)$.
We have then shown that $\vec u|_{U}$ satisfies the condition INV for $H^1$ maps
(with the topological images defined using the Brezis--Nirenberg degree).
As $\widetilde \O$ can be written as the union of countably many $U \in \mc{U}_{\vec u}$, we conclude that $\vec u$ satisfies condition INV.

\medskip

\underline{Part III: regularity of the inverse when $\vec u\in \Asr$.}
Once condition INV for $H^1$ maps 
has been established,
we obtain by \cite[Th.\ 3.4]{HeMo15}
that  the extension of $\vec u^{-1}$ by zero to all $\R^3$ is in $SBV$ and the restriction of $D \vec u^{-1}$ to $\imTBN (\vec u, U)$
is absolutely continuous with respect to the Lebesgue measure
for any $U\in \mathcal{U}_{\vec u}$. 
Apply  this to any $U\in \mathcal{U}_{\vec u}$
such that $\Omega\Subset U \Subset \widetilde \Omega$.
Since $\vec b$ is a $C^1$ orientation-preserving diffeomorphism, 
applying the measure-theoretic inverse function theorem in \cite[Lemma 2.5]{MuSp95}
we find that $\imTBN(\vec b, U) = \imT(\vec b, U)$.
As $\imT (\vec b, U)$ is open, we obtain that $\vec u^{-1} \in W^{1,1} (\imT (\vec b, U), \R^3)$.
Since $\vec u^{-1}=\vec b^{-1}$ in $\widetilde \Omega_{\vec{b}}\setminus \imT(\vec b, U)$, 
it follows that $\vec u^{-1} \in W^{1,1} (\widetilde \Omega_{\vec b}, \R^3)$,
as desired.
\end{proof}

\subsection{Pointwise convergence of inverses}

The following result shows that the inverse is stable under the weak limit in $H^1$.

\begin{lemma}\label{le:convu-1}
For each $j \in \N$, let $\vec u_j , \vec u \in H^1 (\widetilde\Om, \R^3)$ be axisymmetric.
Assume that $\vec u_j \weakc \vec u$  in $H^1(\widetilde\Om, \R^3)$ as $j \to \infty$.
Suppose that $\det D \vec u_j >0$ a.e.\ for all $j \in \N$ and $\det D \vec u >0$ a.e., and that $\vec u_j$ and $\vec u$ are invertible a.e\@.
Then $\vec u_j^{-1} \to \vec u^{-1}$ a.e.
\end{lemma}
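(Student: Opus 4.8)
The plan is to reduce the convergence of the $3D$ inverses to the corresponding statement for the $2D$ inverses $\vec v_j^{-1}$, and then to prove the $2D$ statement directly using the uniform convergence of $\vec v_j$ away from the axis together with the change-of-variables formula. First I would invoke Lemma \ref{le:inverseaxi} to fix axisymmetric representatives, so that $\vec u_j^{-1}$ has corresponding function $\vec v_j^{-1}$ and $\vec u^{-1}$ has corresponding function $\vec v^{-1}$; by the relation \eqref{eq:u-1sym} it then suffices to prove $\vec v_j^{-1}\to \vec v^{-1}$ a.e.\ on $\pi(\widetilde\Om_{\vec b})$, since the pointwise convergence of the planar inverses transfers through $\vec P$ (which is continuous) to pointwise convergence of $\vec u_j^{-1}$ at a.e.\ point of $\widetilde\Om_{\vec b}$ off the axis, and the axis is Lebesgue-null in $\R^3$. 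By Lemma \ref{le:uvinj} the maps $\vec v_j,\vec v$ are injective a.e.\ with positive Jacobian, and by Lemma \ref{le:weakconv}\emph{\ref{item:wcii})}, $\vec v_j\weakc \vec v$ in $H^1(\pi(\widetilde\Om)\setminus([0,\d]\times\R),\R^2)$ for each $\d>0$.

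Next I would fix a point $\vec z_0$ in the (planar) image of $\vec u$ that is off the axis and is a Lebesgue point of $\vec v^{-1}$, and let $\vec w_0:=\vec v^{-1}(\vec z_0)$, so $\vec v(\vec w_0)=\vec z_0$ with $\vec w_0$ off the axis. Since $\vec v$ is continuous and open away from the axis (Lemma \ref{N condition}\emph{\ref{item:Auv})}, or the finite-distortion theory), for small $\rho>0$ the image $\vec v(B(\vec w_0,\rho))$ is an open neighbourhood of $\vec z_0$. The key point is that by Lemma \ref{N condition}\emph{\ref{item:wciii})} we have $\vec v_j\to\vec v$ \emph{uniformly} on the compact set $\overline{B(\vec w_0,\rho)}$ (which lies in a compact subset of $\pi(\widetilde\Om)\setminus(\{0\}\times\R)$), and moreover $\vec v_j\to\vec v$ uniformly on $\partial B(\vec w_0,\rho)$ with $\vec z_0\notin\vec v(\partial B(\vec w_0,\rho))$ for a.e.\ $\rho$; by the stability of the topological degree under uniform convergence, $\deg(\vec v_j,B(\vec w_0,\rho),\vec z_0)=\deg(\vec v,B(\vec w_0,\rho),\vec z_0)\ge 1$ for $j$ large, so $\vec z_0\in\imT(\vec v_j,B(\vec w_0,\rho))$, hence (since $\vec v_j$ is injective a.e.\ with positive Jacobian, so that $\imT=\imG$ a.e.) $\vec v_j^{-1}(\vec z_0)$ lies in $\overline{B(\vec w_0,\rho)}$ for all large $j$ (using that $\vec v_j^{-1}(\vec z_0)$, when defined, is the unique preimage in $\Om_0$ and the degree on the complementary region is zero). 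Letting $\rho\downarrow 0$ along a suitable sequence gives $\vec v_j^{-1}(\vec z_0)\to\vec w_0=\vec v^{-1}(\vec z_0)$. Since the set of such $\vec z_0$ has full measure in the image, this yields $\vec v_j^{-1}\to\vec v^{-1}$ a.e.

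I expect the main obstacle to be the bookkeeping around \emph{where} the inverses are defined: $\vec v_j^{-1}$ is a priori only defined on $\imG(\vec u_j,\widetilde\Om)$, and one needs that, for a.e.\ $\vec z_0$ in $\imG(\vec u,\widetilde\Om)$, the point $\vec z_0$ eventually belongs to $\imG(\vec u_j,\widetilde\Om)$ as well — this is exactly what the degree-stability argument above delivers, but care is needed to handle the null sets entering the identifications $\imT=\imG$ a.e.\ (Lemma \ref{INV3}, Proposition \ref{INV}, Lemma \ref{le:u-1}) uniformly in $j$, and to ensure the chosen radii $\rho$ avoid the (countably many, hence a.e.) bad values for each $j$. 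A clean way around this is to argue along an arbitrary subsequence: extract a further subsequence so that $\vec u_j^{-1}$ converges a.e.\ to \emph{some} limit $\vec w$ (using that $\vec v_j^{-1}$ is bounded in $H^1$ on regions away from the axis, by Proposition \ref{INV2} and the energy bound $E(\vec u_j)\le E(\vec b)$, hence precompact), identify $\vec w=\vec u^{-1}$ pointwise via the relation $\vec u^{-1}(\vec u(\vec x))=\vec x$ passed to the limit using the uniform convergence $\vec u_j\to\vec u$ off the axis, and conclude by uniqueness of the limit that the whole sequence converges. The rest is routine.
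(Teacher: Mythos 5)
Your reduction to the planar inverses via Lemma~\ref{le:inverseaxi} is exactly how the paper proceeds; where the two arguments part ways is in establishing $\vec v_j^{-1}\to\vec v^{-1}$ a.e\@. The paper treats this as a black box and cites \cite[Th.~6.3]{BaHeMo17}, whereas you reconstruct a self-contained degree argument: uniform convergence of $\vec v_j$ on compacts away from the axis (Lemma~\ref{N condition}\emph{\ref{item:wciii})}) plus stability of the degree gives $\deg(\vec v_j,B(\vec w_0,\rho),\vec z_0)=\deg(\vec v,B(\vec w_0,\rho),\vec z_0)\geq 1$ for large $j$, from which one reads off that the unique preimage in $\Omega_0$ of $\vec z_0$ under $\vec v_j$ is eventually trapped in $\overline{B(\vec w_0,\rho)}$. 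This is morally the content of the cited theorem, so you buy self-containedness at the cost of length. You correctly flag the delicate part: passing from $\vec z_0\in\imT(\vec v_j,B(\vec w_0,\rho))$ (which is what the degree gives) to $\vec z_0\in\imG(\vec v_j,B(\vec w_0,\rho))$ (which is what is needed to speak of $\vec v_j^{-1}(\vec z_0)$) is only an a.e.~identification, and the exceptional null set depends both on $j$ and on the ball; this needs a countable exhaustion (rational radii, countably many $j$, plus the conditions that $\vec z_0$ avoid $\vec v(\partial B(\vec w_0,\rho))$ and $\vec v_j(\partial B(\vec w_0,\rho))$) to be done rigorously, and while that can be arranged it is more than ``routine''. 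One caveat on your fallback route (compactness of $\{\vec u_j^{-1}\}$ plus identification of the limit): the bound on $\|D\vec u_j^{-1}\|_{L^1}$ via $\|\cof D\vec u_j\|_{L^1}$ indeed follows from the $H^1$-boundedness implicit in the weak convergence, but the machinery you lean on (Proposition~\ref{INV2}, Lemma~\ref{le:u-1}, the energy bound $E(\vec u_j)\le E(\vec b)$) is stated for maps in $\As$, i.e.\ with the boundary condition $\vec u_j=\vec b$ in $\Omega_D$, which is \emph{not} among the hypotheses of Lemma~\ref{le:convu-1} as stated; in the paper this lemma is always invoked for maps in $\As$, so the gap is harmless in context, but strictly speaking you are borrowing structure the statement does not grant you.
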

\begin{proof}
By Lemma \ref{le:weakconv}, $\vec u$ is axisymmetric.
Let $\vec v_j$ and $\vec v$ be the corresponding \(2D\) functions to $\vec u_j$ and $\vec u$, respectively.
By Lemmas \ref{le:uvSobolev}, \ref{le:uvinj} and \ref{N condition}, $\det D \vec v_j > 0$ a.e., $\vec v_j$ is injective a.e., 
$\vec v_j \in H^1 (\pi (\widetilde\O) \setminus ([0, \d] \times \R)))$ for any $\d > 0$, and analogously for $\vec v$.
Moreover, $\vec v_j \weakc \vec v$ in $H^1 (\pi (\widetilde\O) \setminus ([0, \d] \times \R)))$ for each $\d>0$, and 
$\E (\vec v_j, \pi (\widetilde\O) \setminus (\{0\} \times \R)) = 0$, and analogously for $\vec v$.
By \cite[Th.\ 6.3]{BaHeMo17}, $\vec v^{-1}_j \to \vec v^{-1}$ a.e\@.
Arguing as in Lemma \ref{le:weakconv} (on the inverses) shows that $\vec u^{-1}_j \to \vec u^{-1}$ a.e.
\end{proof}

\subsection{The horizontal components of the inverse have no singular parts on $\imT(\vec u, L)$}

For general maps in $\As$ the equality $\widetilde\Om_{\vec b}=\imG(\vec u,\widetilde\Om)$ does not hold in general, 
as can be seen by the classical example of a radial cavitation.
Furthermore, even when $\imG(\vec u,\widetilde\Om)$ does coincide a.e.\@ with $\widetilde\Om_{\vec b}$,
the inverse is not necessarily in $W^{1,1}(\widetilde \Omega_{\vec b}, \R^3)$ as shown by the example of 
Conti \& De Lellis \cite{CoDeLe03} where $\imT(\vec u, L)$ consists (apart from the symmetry axis) of the sphere 
$\partial B\big ( (0,0,\frac{1}{2}), \frac{1}{2} \big )$ in the deformed configuration
and $\vec u^{-1}$ has a jump across this sphere.
Nevertheless, in the following lemma we show that any such singularities in $\vec u^{-1}$
are due to the vertical component $u_3^{-1}$ of the inverse, whereas its horizontal components 
$u_1^{-1}$ and $u_2^{-1}$ enjoy a Sobolev regularity.

From now on, for $\alpha \in \{1, 2, 3\}$, we denote by $u^{-1}_\alpha$ the $\alpha$-th component 
of $\vec u^{-1}$. Recall that equalities \eqref{eq:Om'} hold and that
\begin{align}
		\label{le:imL_union_imOminusL}
	 \widetilde{\O}_{\vec b}= \imT(\vec u, \widetilde\O \setminus L) \cup \imT(\vec u, L)
	 \end{align}
	  by Definition \ref{def:imT(u,L)} and Lemma \ref{le:contained_in_target_domain}.(b).
	We shall need the following gluing theorem for $BV$ functions \cite[Th.~3.84]{AmFuPa00}.
	
	\begin{proposition}
			\label{le:BVglue}
		Let \( N,m \geq 1\). Let \(\O \subset \R^N\) be an open set. Let $\vec u, \vec v \in BV(\Omega, \R^m)$ and 
		let $E$ be a set of finite perimeter in $\Omega$, with $\partial^* E \cap \Omega$
		oriented by $\vecg\nu_E$.
		Let $\vec u^+_{\partial^*E}$, $\vec v^-_{\partial^*E}$ be the traces of $\vec u$ and $\vec v$ on $\partial^* E$, which are 
		defined for $\mathcal H^2$-a.e.\ point of $\partial^*E$.
		Set $\vec w= \vec u\chi_E + \vec v\chi_{\Omega\setminus E}$.
		Then
		$$
			\vec w \in BV(\Omega, \R^m)
			\quad \Leftrightarrow\quad
			\int_{\partial^*E \cap \Omega}
			|\vec u^+_{\partial^*E} - \vec v^-_{\partial^*E}| \, \dd\mathcal H^{N-1} < \infty ,
		$$
and in this case,
		$$
			D \vec w = D \vec u\res E^1 + (\vec u^+_{\partial^*E}- \vec v^-_{\partial^*E})\otimes \vecg\nu_{E} \mathcal H^{N-1}\res (\partial^*E\cap \Omega) + D \vec v\res E^0,
		$$
		where $E^0$ and $E^1$, respectively, denote the set of points at which
		$E$ has density $0$ and $1$.
	\end{proposition}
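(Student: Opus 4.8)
The plan is to establish the equivalence and the derivative formula simultaneously, leaning on the fine structure theory of $BV$ functions and of sets of finite perimeter. First I would record the elementary facts: $\vec w\in L^1(\Omega,\R^m)$ because $|\vec w|\le |\vec u|+|\vec v|$ pointwise; $\mathcal H^{N-1}(\partial^* E\cap\Omega)=\Per(E;\Omega)<\infty$, so the integral in the statement is taken over a set of finite $\mathcal H^{N-1}$-measure; and, by Federer's structure theorem, $\mathcal L^N$-a.e.\ point of $\Omega$ lies in $E^0\cup E^1$, while $\mathcal H^{N-1}$-a.e.\ point of $\Omega$ lies in $E^0\cup E^1\cup\partial^* E$, with $\partial^* E$ countably $(N-1)$-rectifiable and $D\chi_E=\vecg\nu_E\,\mathcal H^{N-1}\res\partial^* E$. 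I would also recall the $BV$ trace theorem: on an oriented rectifiable set every $BV$ function admits one-sided approximate limits $\mathcal H^{N-1}$-a.e., so the traces $\vec u^+_{\partial^* E}$ and $\vec v^-_{\partial^* E}$ in the statement are well defined.

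For the implication ``$\Leftarrow$'' I would first dispose of the bounded case: when $\vec u,\vec v\in BV\cap L^\infty$, the Leibniz rule for products of bounded $BV$ functions applied to $\vec u\chi_E$ and $\vec v\chi_{\Omega\setminus E}$ gives $\vec w\in BV(\Omega,\R^m)$ together with the stated representation of $D\vec w$ (the jump integral being automatically finite since the traces are bounded and $\mathcal H^{N-1}\res\partial^* E$ is finite). For general $\vec u,\vec v$ I would truncate: pick $1$-Lipschitz maps $\tau_k:\R^m\to\R^m$ with $\tau_k(0)=0$ and $\tau_k(\xi)=\xi$ for $|\xi|\le k$, and set $\vec u_k:=\tau_k\circ\vec u$, $\vec v_k:=\tau_k\circ\vec v$, $\vec w_k:=\vec u_k\chi_E+\vec v_k\chi_{\Omega\setminus E}$. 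The $BV$ chain rule yields $|D\vec u_k|\le|D\vec u|$ and $|D\vec v_k|\le|D\vec v|$, and, since $\tau_k$ is continuous, the one-sided traces satisfy $(\vec u_k)^+_{\partial^* E}=\tau_k\circ\vec u^+_{\partial^* E}$ and $(\vec v_k)^-_{\partial^* E}=\tau_k\circ\vec v^-_{\partial^* E}$ $\mathcal H^{N-1}$-a.e.\ on $\partial^* E$. Hence, from the bounded case,
\begin{align*}
	|D\vec w_k|(\Omega)
	&=|D\vec u_k|(E^1)+|D\vec v_k|(E^0)
	+\int_{\partial^* E\cap\Omega}\big|(\vec u_k)^+_{\partial^* E}-(\vec v_k)^-_{\partial^* E}\big|\,\dd\mathcal H^{N-1}
	\\
	&\le |D\vec u|(\Omega)+|D\vec v|(\Omega)
	+\int_{\partial^* E\cap\Omega}\big|\vec u^+_{\partial^* E}-\vec v^-_{\partial^* E}\big|\,\dd\mathcal H^{N-1},
\end{align*}
where $\tau_k$ being $1$-Lipschitz was used in the last integral. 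Since $\vec w_k\to\vec w$ in $L^1(\Omega,\R^m)$ by dominated convergence, finiteness of the right-hand side (the hypothesis) together with lower semicontinuity of the total variation forces $\vec w\in BV(\Omega,\R^m)$.

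Once $\vec w\in BV(\Omega,\R^m)$ is known, I would read off the formula from the decomposition $D\vec w=D^a\vec w+D^j\vec w+D^c\vec w$. Since $\vec w=\vec u$ $\mathcal L^N$-a.e.\ on $E^1$ and $\vec w=\vec v$ $\mathcal L^N$-a.e.\ on $E^0$ (and $E$ differs from $E^1$ by an $\mathcal L^N$-null set), the approximate gradients and the Cantor parts of $\vec w$, $\vec u$, $\vec v$ agree on the respective sets, while $D^c\vec u$ and $D^c\vec v$ do not charge $\partial^* E$ because that set is $\mathcal H^{N-1}$-$\sigma$-finite. For the jump part I would use that $\mathcal H^{N-1}$-a.e.\ point of $\Omega$ is in $E^0$, $E^1$ or $\partial^* E$: at density-$1$ (resp.\ density-$0$) points $\vec w$ has the same approximate one-sided limits as $\vec u$ (resp.\ $\vec v$), whereas at $\mathcal H^{N-1}$-a.e.\ point of $\partial^* E$ the blow-up of $E$ is the half-space with inner normal $\vecg\nu_E$, so the blow-up of $\vec w$ has approximate limits $\vec u^+_{\partial^* E}$ on the $E$-side and $\vec v^-_{\partial^* E}$ on the complementary side; summing the three contributions yields exactly the claimed expression for $D\vec w$. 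For the implication ``$\Rightarrow$'': if $\vec w\in BV(\Omega,\R^m)$ then $|D^j\vec w|(\Omega)<\infty$, and by the same blow-up analysis $|\vec w^+-\vec w^-|=|\vec u^+_{\partial^* E}-\vec v^-_{\partial^* E}|$ $\mathcal H^{N-1}$-a.e.\ on $\partial^* E$, whence $\int_{\partial^* E\cap\Omega}|\vec u^+_{\partial^* E}-\vec v^-_{\partial^* E}|\,\dd\mathcal H^{N-1}\le|D^j\vec w|(\Omega)<\infty$.

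I expect the main obstacle to be the trace and blow-up analysis along $\partial^* E$: one must know that an arbitrary $BV$ function has one-sided approximate limits $\mathcal H^{N-1}$-a.e.\ on the rectifiable set $\partial^* E$, consistently with the orientation $\vecg\nu_E$, and that the blow-ups of $\vec w$ and of $\vec u\chi_E+\vec v\chi_{\Omega\setminus E}$ are compatible at common rescaling points, so that these traces genuinely describe the jump of $\vec w$. This rests on the Federer--Volpert structure theorem for $BV$ functions and on the rectifiability of reduced boundaries; everything else is bookkeeping with the Radon--Nikodym decomposition of $D\vec w$. The complete argument is carried out in \cite[Thm.~3.84]{AmFuPa00}, which we invoke.
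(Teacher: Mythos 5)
The paper states this proposition without proof, attributing it directly to \cite[Thm.~3.84]{AmFuPa00}. Your proposal gives a correct sketch of that reference's argument (reduction to the bounded case via $1$-Lipschitz truncations and lower semicontinuity, the product/Leibniz rule for bounded $BV$ functions, locality of the absolutely continuous and Cantor parts on $E^0$ and $E^1$, and the blow-up/trace analysis along $\partial^* E$) and you ultimately invoke the same citation, so the two approaches coincide.
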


\begin{proposition}\label{regularity first 2 components}
Let $\vec u \in \As$ and \(\alpha=1,2 \). Denote by $\widehat{u^{-1}_\alpha} : \widetilde{\O}_{\vec b} \to \R$ the map
\[
 \widehat{u^{-1}_\alpha} = \begin{cases}
 u^{-1}_\alpha & \text{in } \imT(\vec u, \widetilde\O \setminus L)=\widetilde{\O}_b \setminus \imT(\vec u,L) , \\
 0 & \text{in } \imT(\vec u, L) .
 \end{cases}
\]
Then, $\widehat{u^{-1}_\alpha}\in W^{1,1} (\widetilde\Om_{\vec b})$ and 
$\widehat{u^{-1}_\alpha}$ has a precise representative whose restriction to the complement
of a certain $\mathcal{H}^2$-null set is continuous.
\end{proposition}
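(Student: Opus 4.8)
The plan is to reduce everything to a \emph{trace vanishing} statement for the horizontal part $\vec h:=(u^{-1}_1,u^{-1}_2)$ of the inverse on the boundary of $E:=\imT(\vec u,\widetilde\O\setminus L)$. First recall from Lemma~\ref{le:u-1} that $E=\imG(\vec u,\widetilde\O)$ a.e., that $E=\widetilde\O_{\vec b}\setminus\imT(\vec u,L)$ by \eqref{eq:created_surface_is_contained}, that $\vec u^{-1}\in W^{1,1}(E,\R^3)$, and that $D\vec u^{-1}(\vec u(\vec x))=\adj D\vec u(\vec x)/\det D\vec u(\vec x)$ for a.e.\ $\vec x\in\O$ (formula \eqref{eq:formulaDu-1}). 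Since $\vec u^{-1}$ takes values in $\Omega_0\subset\widetilde\O$ it is bounded, and the area formula (Proposition~\ref{prop:area-formula}) together with Lemma~\ref{eq:ineq-energy-area} gives $\int_E|D\vec u^{-1}|=\int_{\widetilde\O}|\cof D\vec u|\le\frac1{\sqrt3}\int_{\widetilde\O}|D\vec u|^2<\infty$. Hence $\widehat{u^{-1}_\alpha}\in L^\infty\cap L^1(\widetilde\O_{\vec b})$ and the candidate gradient $g_\alpha:=\chi_E\,\big(\text{$\alpha$-th row of }\adj D\vec u/\det D\vec u\big)\circ\vec u^{-1}$ lies in $L^1(\widetilde\O_{\vec b},\R^3)$.

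The key step is: \emph{for every $\vec y_0\in\partial E\cap\widetilde\O_{\vec b}$ and every $\eps>0$ there is $\rho>0$ with $|\vec h(\vec y)|\le\eps$ for a.e.\ $\vec y\in B(\vec y_0,\rho)\cap E$}. I would prove this with the exhaustion $\{U_k\}\subset\mathcal U_{\vec u}$ of $\widetilde\O\setminus L$ from Lemma~\ref{le:exhaustion}. Given $\eps$, choose a thin one-sided neighbourhood $\Omega_D'\subset\Omega_D$ of $\partial\widetilde\O$ small enough that $\vec b(\Omega_D')$ keeps a positive distance from the interior point $\vec y_0$, and then $k$ large enough that the set $\widetilde\O\setminus\big(\{\vec x:\dist(\vec x,L)<\eps\}\cup\Omega_D'\big)$, which is compactly contained in $\widetilde\O\setminus L$, is contained in $U_k$; thus $\widetilde\O\setminus U_k\subset\{\dist(\cdot,L)<\eps\}\cup\Omega_D'$. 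As $\vec y_0\notin\imT(\vec u,U_k)$ and this topological image is open, pick $\rho>0$ with $B(\vec y_0,\rho)$ disjoint from both $\imT(\vec u,U_k)$ and $\vec b(\Omega_D')$. For a.e.\ $\vec y\in B(\vec y_0,\rho)\cap E$ write $\vec y=\vec u(\vec x)$, $\vec x=\vec u^{-1}(\vec y)\in\Omega_0$: if $\vec x\in U_k$ then $\vec y\in\imG(\vec u,U_k)=\imT(\vec u,U_k)$ a.e.\ (Lemma~\ref{INV3}), impossible; if $\vec x\in\Omega_D'$ then $\vec y=\vec b(\vec x)\in\vec b(\Omega_D')$, impossible. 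Therefore $\dist(\vec x,L)<\eps$, and since $L\subset\R\vec e_3$ we get $|\vec h(\vec y)|\le\dist(\vec x,\R\vec e_3)\le\dist(\vec x,L)<\eps$. This is exactly where axisymmetry is used: all ``sources'' of the created surface $\imT(\vec u,L)$ sit on the axis $L$, so the horizontal components of the inverse must decay to $0$ there, whereas the vertical component need not.

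From the trace estimate, at each $\vec y_0\in\partial E\cap\widetilde\O_{\vec b}$ one has $|\widehat{u^{-1}_\alpha}|\le\eps$ a.e.\ on $B(\vec y_0,\rho)$ (it vanishes off $E$), so $\vec y_0$ is a Lebesgue point of $\widehat{u^{-1}_\alpha}$ with value $0$. To conclude $\widehat{u^{-1}_\alpha}\in W^{1,1}(\widetilde\O_{\vec b})$ with $D\widehat{u^{-1}_\alpha}=g_\alpha\,\mathcal L^3$, I would integrate by parts against $\vec\phi\in C^1_c(\widetilde\O_{\vec b},\R^3)$ on the superlevel sets $E^t:=\{\vec y\in E:|\vec h(\vec y)|>t\}$: by the trace estimate $\overline{E^t}\cap\widetilde\O_{\vec b}\subset E$, by the coarea formula applied to $|\vec h|\in W^{1,1}(E)$ they have finite perimeter for a.e.\ $t$ with $\int_0^\infty\mathcal H^2(\partial^*E^t)\,\dd t<\infty$, and the trace of $u^{-1}_\alpha$ on $\partial^*E^t$ is bounded in absolute value by $t$; letting $t\to0$ along a sequence with $t\,\mathcal H^2(\partial^*E^t)\to0$ kills the boundary term. (Alternatively, recall from the proof of Proposition~\ref{INV2} that the extension by zero of $\vec u^{-1}$ is $SBV$; the computation above shows its jump set is $\mathcal H^2$-negligible, which again gives $\widehat{u^{-1}_\alpha}\in W^{1,1}$.) Finally, a $W^{1,1}$ function has a $1$-quasicontinuous precise representative, hence one that is continuous outside a set of zero $1$-capacity, and such sets are $\mathcal H^2$-null in $\R^3$; this yields the last assertion.

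The main obstacle is the trace vanishing statement and, within it, the bookkeeping needed to run the degree argument: one must exhaust $\widetilde\O\setminus L$ by sets $U_k$ that isolate the genuinely singular neighbourhood of $L$ from the region near $\partial\widetilde\O$ (where $\vec u=\vec b$ is a diffeomorphism and causes no trouble), and one must repeatedly use the a.e.\ identifications $\imG(\vec u,U_k)=\imT(\vec u,U_k)$ and $\imG(\vec u,\widetilde\O)=E$ provided by Lemmas~\ref{INV3} and \ref{le:u-1}.
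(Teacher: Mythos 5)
Your overall plan — reduce to a trace–vanishing statement for the horizontal part of the inverse on $\partial E$, then deduce $W^{1,1}$ by integration by parts / gluing, and finally read off the pointwise continuity — is a genuinely different route from the paper's (which descends to the planar map $\vec v$, exploits the $\mathcal{H}^1$-continuity of $\vec v^{-1}$ from \cite{BaHeMo17}, and builds explicit continuous approximants $w_k\to w$ converging uniformly off an $\mathcal H^1$-null set $T$). Your route would be cleaner if it worked. But there are two real gaps.

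\textbf{Gap 1 (the central step).} The sentence ``As $\vec y_0\notin\imT(\vec u,U_k)$ and this topological image is open, pick $\rho>0$ with $B(\vec y_0,\rho)$ disjoint from $\imT(\vec u,U_k)$'' does not follow: a point not in an open set need not be a positive distance from it. Since $\vec y_0\in\partial E=\partial\big(\bigcup_j\imT(\vec u,U_j)\big)$, it may well lie in $\overline{\imT(\vec u,U_k)}$ for \emph{every} $k$ (by Lemma~\ref{le:bdryimT} this forces $\vec y_0\in\vec u(\partial U_k)$ for each such $k$). In that case $B(\vec y_0,\rho)\cap E$ always contains points $\vec y\in\imT(\vec u,U_k)=\imG(\vec u,U_k)$ a.e., for which $\vec u^{-1}(\vec y)\in U_k$ gives you no control over $\dist(\vec u^{-1}(\vec y),L)$, and the desired bound $|\vec h(\vec y)|\le\eps$ breaks down. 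This is precisely why the paper's proof does not claim continuity at \emph{every} boundary point, but only outside an $\mathcal H^1$-null (in 2D) exceptional set $T$: the problematic points are exactly those on $\vec v(\partial E_k)$, and the $\mathcal H^1$-a.e.\ continuity of the precise representative of $\vec v^{-1}$ (Props.~5.14, 5.16 of \cite{BaHeMo17}) together with the injectivity on $\Omega_0$ are used to identify the trace from both sides as $\delta_k$ at $\mathcal H^1$-a.e.\ such point, and uniform convergence $w_k\to w$ then gives continuity off $T$. Your argument has no substitute for this ingredient, and without it the trace–vanishing claim is unjustified (indeed, as stated — for \emph{every} $\vec y_0\in\partial E\cap\widetilde\O_{\vec b}$ — it is stronger than what the proposition asserts, which allows an $\mathcal H^2$-null exception).

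\textbf{Gap 2 (final step).} Deducing the pointwise continuity from $W^{1,1}$ quasi-continuity does not work. Quasi-continuity for $W^{1,1}$ gives a representative that is continuous outside open sets of arbitrarily small $\mathrm{cap}_1$, not outside a single set of zero capacity; and even passing to approximate continuity (which does hold $\mathcal H^{n-1}$-a.e.\ for $BV$) is strictly weaker than the genuine continuity of the restriction claimed in the statement. The paper instead obtains continuity directly, as the uniform limit on $\pi(\widetilde\O_{\vec b})\setminus T$ of the continuous functions $w_k$. Your $W^{1,1}$ step and your continuity step are therefore also entangled: even if $W^{1,1}$ regularity were secured (by your superlevel-set/coarea argument, or by the paper's BV-gluing route), the continuity would not follow from it.

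In short, the decomposition into ``trace vanishing $\Rightarrow$ $W^{1,1}$'' is a nice idea and highlights the role of axisymmetry (the horizontal part of the inverse must collapse to the axis near $\partial E$), but the key analytic input — some uniform control of the trace on $\partial E$ that is insensitive to $\vec y_0$ possibly sitting on $\vec u(\partial U_k)$ for all $k$ — is missing. The paper supplies it via the 2D $\mathcal H^1$-continuity theory, which is exactly what makes the exceptional set $T$ small and makes the approximation $w_k\to w$ uniform.
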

\begin{proof}
Let $\vec v$ be the planar function corresponding to $\vec u$.
As usual we identify $\vec v$ with its continuous representative in 
$\pi(\widetilde \Omega )\setminus(\{0\}\times \R)$.
\medskip

\underline{Part I: covering the $2D$ domain $\pi(\Omega)\setminus (\{0\}\times \R)$
	with an increasing sequence of good}
	\\
	\underline{open sets, ever closer to the singular segment.}
	
Thanks to Lemma \ref{le:uvSobolev}, $\vec v \in H^1 (\pi(\widetilde\Om) \setminus ([0, \d] \times \R ), \R^2)$ for each $\d>0$.
Then, by Lemma \ref{le:enough_good_open_sets}, for a.e.\ small $\d>0$ it is possible to find\footnote{
In principle, we would like to use $E_\delta:= \pi(\widetilde\Omega)\setminus ([0,\delta]\times \R)$
itself. However, our analysis is based on previous works where having a $C^2$ boundary (and not just piecewise $C^2$), 
as well as being compactly contained in the working domain, were added as requirements for membership to the class of good open sets
(for the sake of achieving concise statements such as the assertion `$U_t\in \mathcal{U}_{\vec u}$ for a.e.\ $t$' in Lemma \ref{le:enough_good_open_sets}).
}
an open set $E^{(\delta)}\Subset \pi(\widetilde \Omega )\setminus (\{0\}\times \R)$, with a \(C^2\) boundary, such that 
\begin{equation*}
		\partial E^{(\delta)} \cap \pi(\Omega)
		=\{(r,x_3)\in \pi(\Omega): r=\delta\}
\end{equation*}
and $E^{(\delta)} \in \mathcal{U}_{\vec v}$ (that is, all the analogous properties of Definition \ref{def:good_open_sets} are satisfied for the 
planar map $\vec v$). Indeed, it can be seen that for every small $c>0$ it is possible to construct a $C^2$ open set 
$E\Subset \pi(\widetilde\Omega\setminus (\{0\}\times \R)$ such that 
$$
\partial E \cap \pi(\Omega) = \{(r, x_3)\in \pi(\Omega): r=c\}
$$
(begin with the set $(\{c\}\times \R)\cap \pi(\Omega)$, which consists of a finite number of segments and is nonempty because $c$ is small;
stretch this set vertically so as to obtain a new finite union of segments containing the former ones but having their endpoints on
$\pi(\Omega_D)=\pi(\widetilde\Omega\setminus \overline \Omega)$; then close the loop---or loops---with a $C^2$ curve entirely contained in $\pi(\Omega_D)$).
Applying Lemma \ref{le:enough_good_open_sets} to $E$ we find that $E_t\in \mathcal{U}_{\vec v}$ for a.e.\ $t>0$, with $E_t$ defined in \eqref{eq:def_distance_level_sets}.
It can be seen that 
$$
\partial E_t \cap \pi(\Omega) = \{(r, x_3)\in \pi(\Omega): r=c-t\}.
$$
Recall that $E_t\in \mathcal{U}_{\vec v}$ for every small $c>0$ and a.e.\@ small $t>0$.
Since a.e.\@ small $\delta>0$ can be written as $\delta=c-t$ for an appropriate choice of $c$ and $t$, the claim follows.

\medskip

\underline{Part II: the inverse and the definition of the $\mathcal{H}^1$-null exceptional set.}
	
From the comments in Subsection \ref{sob reg inverse}, 
$\vec v^{-1}\in W^{1,1}(\imT (\vec v, E^{(\delta)}),\R^2)$ and it is continuous
at every point of $\imT (\vec v, E^{(\delta)})\setminus T_\delta$, where
$T_\delta$ has zero $\mathcal{H}^1$-measure and it is defined by
\begin{multline*}
T_\delta:=\Big\{(s,y_3)\in \imT(\vec v, E^{(\delta)}): \text{ there exist at least}
\\
\text{two different points }
(r,x_3)\in \overline{E^{(\delta)}}
\text{ such that }
(s,y_3)\in \imT\big (\vec v, (r,x_3)\big ) \Big \}.
\end{multline*}
Observe that for every $(r,x_3)\in \pi(\widetilde\Omega)\setminus (\{0\}\times \R)$,
\begin{align}\label{eq:left_inverse}
\vec v(r,x_3)\in \imT(\vec v, E^{(\delta)})\setminus T_\delta
\quad \Rightarrow\quad 
\vec v^{-1}\big ( \vec v(r,x_3) \big )= (r,x_3).
\end{align}

Fix a sequence $(E_k)_{k\in \N}$ of the sets $E^{(\delta)}$ of Part I
with $E_k=E^{(\delta_k)}$ and $(\delta_k)_{k\in \N}$ a sequence decreasing to zero. 
Without loss of generality, assume that $E_{k}\Subset E_{k+1}$ for all $k$,
that each $E_k$ is the projection by $\pi$ of a good $3D$ open set $V_k \in \mathcal{U}_{\vec u}$,
and that $\pi(\widetilde\Omega) \setminus (\{0\}\times\R) = \bigcup_{k \in \mathbb{N}} E_k$.
Set
$$
T:= \bigcup_{k\in \N} T_{\delta_k} \cup
\bigcup_{k\in\N} \vec v(\partial E_k\setminus \pi(\Omega_0)),
$$
where $\Omega_0$ is the set of Definition \ref{def:Om0}.
By property (b) in Definition \ref{def:good_open_sets} and the fact that $H^1(\Gamma,\R^2)$
functions on a smooth curve $\Gamma$ map $\mathcal{H}^1$-null sets onto $\mathcal H^1$-null sets
(see \cite{MaMi73}, \cite[Prop.~2.7]{MuSp95}), it follows that $\mathcal H^1(T)=0$.

Denote by $w : \pi(\widetilde \Om_{\vec b}) \to \R$ the function
\[
 w = \begin{cases}
 v^{-1}_1 & \text{in } \pi (\imT(\vec u, \widetilde \O \setminus L)) , \\
 0 & \text{in } \pi (\imT(\vec u, L)) ,
 \end{cases}
\]
where \(v_1^{-1}\) is the first component of the inverse of \(\vec v\). 
In particular \(\vec u^{-1}= v^{-1}_1(\cos \theta \vec e_1+\sin \theta \vec e_2)+v^{-1}_2\vec e_3\).

From \eqref{eq:bdryimT}  we have $\p \imT(\vec v,E_k) \subset \vec v(\partial E_k)$ for all $k$.
	But \( \p E_k
	=\big (\p E_k \cap \pi(\widetilde \O\setminus \Omega)\big )  \cup \big( (\{\d_k\}\times \R) \cap \pi(\O)  \big)\). Thus we find
	\( \p \imT(\vec v,E_k) \subset \vec v\big( \pi(\widetilde \O\setminus \Omega)\big )
	\cup \vec v \left( (\{\d_k\}\times \R) \cap \pi(\O)\right)\). However, thanks to the boundary condition \(\vec b\), we know that \(\vec v\big( \pi(\widetilde \O\setminus \Omega)\big )=\pi \big(\vec b(\widetilde\Omega \setminus \Omega)\big)\).
	 Hence \( \pi(\O_{\vec b})\cap \vec v\big( \pi(\widetilde \O\setminus \Omega)\big )=\varnothing\). But we also have \( \vec v(\pi(\O)) =\pi(\O_{\vec b})\). Thus we  infer that \( \vec v \big( (\{\d_k\}\times \R)\cap \pi(\O) \big)\cap \pi(\O_{\vec b})=\vec v\big( (\{\d_k\}\times \R) \cap \pi(\O) \big)\) and we deduce	
	\begin{align}
		\label{eq:bdryE_k}
		\p \imT(\vec v, E_k)\cap \pi(\O_\vec b)= \vec v\big((\{\d_k\}\times \R)\cap \pi(\O) \big).
	\end{align}
	
Let $(\delta_k, x_3)\in \pi(\Omega)$ be such that $(s,y_3):=\vec v (\delta_k,x_3) \in \pi(\Omega_{\vec b})\setminus T$. 
We claim that 
\begin{align}
			\label{eq:inverse_circ_v}
		(s,y_3)\in 
		\pi\big (\imT(\vec u, \widetilde\Omega\setminus L)\big ),
			\quad
		w \text{ is continuous at } (s,y_3),
			\quad \text{and} \quad
		w(s,y_3)=\delta_k. 
\end{align}
First, we show that $(s,y_3)\in \imT(\vec v, E_{k+1})$. Suppose, for a contradiction, that this were false. 
Since, by continuity, it is easy to see that $(s,y_3)\in \overline{\imT(\vec v, E_{k+1})}$, by \eqref{eq:bdryimT}
it follows that $(s,y_3)= \vec v(\delta_{k+1}, x_3')$ for some $(\delta_{k+1}, x_3')\in \pi(\Omega)$. 
But $(s, y_3)\notin T$, so both $(\delta_k,x_3)$ and $(\delta_{k+1}, x_3')$
belong to $\Omega_0$. Thus, on the one hand, $\vec v(\delta_k, x_3)=\vec v(\delta_{k+1}, x_3')$. 
On the other hand, $\vec u$ is injective in $\Omega_0$ \cite[Lemma 3]{HeMo11}, yielding a contradiction.
	
By Lemma \ref{le:imTuimTv} and Definition \ref{def:imT(u,L)}, we conclude that 
$(s,y_3)$ indeed belongs to $\imT(\vec u, \widetilde \Omega\setminus L)$. 	
In addition, since $(s,y_3)\in \imT(\vec v, E_{k+1})\setminus T_{k+1}$, 
$\vec v^{-1}$ is continuous at $(s,y_3)$.
By \eqref{eq:left_inverse}, $\vec v^{-1}(s,y_3)=(\delta_k, x_3)$ and
$w(s,y_3)=\delta_k$.

	Combining \eqref{eq:bdryE_k} with \eqref{eq:inverse_circ_v} we find that
	if $(s,y_3)\in \partial \imT(\vec v, E_k) \cap \pi (\Omega_{\vec b}) \setminus T$
	and $(s^{(j)}, y_3^{(j)})$
	is any sequence converging to $(s,y_3)$ 
	then $w\big ( (s^{(j)}, y_3^{(j)}) \big )
	\overset{j\to\infty}{\longrightarrow}
	\delta_k$.
	This is the motivation for the definition
	of the exceptional set $T$.

	\medskip
	\underline{Part III: $\imT(\vec v, E_k) \cup 
	 \pi \Big ( \vec b \big (\widetilde\Omega\setminus (\Omega \cup \overline C_{\delta_k})\big ) \Big )
	 	= \imT\big ( \vec v, 
 		\pi(\widetilde\Omega)\setminus
 		([0,\delta_k]\times\R)\big)$
 		 for every $k\in \N$.}
 	In preparation for using the excision 
 	property of the degree,
 	let us show first that 
 	\begin{align}
	 		\label{eq:pre-excision1}
 		\text{if }
 		\vec y'_0 \in \imT(\vec v, E_k)
 			\quad 
 		\text{then}
 			\quad 
 		\vec y'_0 \notin 
 		\vec v \Big ( 
 			\partial E_k
 			\cup
 			\partial \big (
 				\pi(\widetilde\Omega)\setminus
 		\big([0,\delta_k]\times\R\big)
 		\big ) \Big ).
 	\end{align}
 	By definition of topological image, 
 	$\vec y_0'\notin \vec v(\partial E_k)$.
 	Suppose, for a contradiction, that
 	$\vec y_0' \in \vec v \Big ( 
 	\partial \big (
 				\pi(\widetilde\Omega)\setminus
 		\big([0,\delta_k]\times\R\big)
 		\big ) 
 		\setminus \partial E_k \Big )$.
 	Then $\vec y_0'=\vec v(\vec x_0')$
 	for some $\vec x_0'=(r_0, x_3) \in 
 	\partial \pi \big( \widetilde \Omega \big)$
 	with $r_0\geq \delta_k$,
 	or some $\vec x_0'=(r_0, x_3)
 	\in \partial \Big ( \pi(\widetilde\Omega)\setminus\big (\overline{E_k}\cup ([0,\delta_k]
 		\times \R)\big )\Big )$
 	with $r_0=\delta_k$.
 	In both cases,
 	$\vec x_0'$ is in the region where $\vec v$
 	is defined through the boundary data $\vec b$.
 	By \cite[Lemma 2.5]{MuSp95}
 	$$
 		D\left( 
 				\vec v \Big ( \pi(\widetilde \Omega) \setminus\big (\overline{E_k}\cup ([0,\delta_k]
 		\times \R)\big )		
 		\Big),
 		\vec y_0' \right) \geq \frac{1}{2}.
 	$$
 	At the same time, since $\vec y_0'$
 	belongs to the open set $\imT(\vec v, E_k) \overset{\text{a.e.}}{=} \imG(\vec v,E_k)$,
 	using \eqref{eq:deg_positive} and 
 	Lemma \ref{le:imTuimTv}
 	we find that
 	$$
 		\mathcal L^2 \left(\imG(\vec v, E_k) \cap 
 		\vec v \Big ( \pi(\widetilde \Omega) \setminus\big (\overline{E_k}\cup ([0,\delta_k]
 		\times \R)\big )		
 		\Big )\right) >0.
 	$$
 	Since $\vec v$ is injective a.e.\@ 
 	we arrive at a contradiction.
 	
 	A similar proof yields that 
 	\begin{multline*}
 		\text{if }
 		\vec y'_0 \in \imT\Big ( \vec v, 
 		\pi(\widetilde\Omega)\setminus
 		\big([0,\delta_k]\times\R\big)\Big )
 		\setminus 
 		\pi \Big ( \vec b \big (\widetilde\Omega\setminus (\Omega \cup \overline C_{\delta_k})\big ) \Big )
 		\\
 		\text{then}
 			\quad 
 		\vec y'_0 \notin 
 		\vec v \Big ( 
 			\partial E_k
 			\cup
 			\partial \big (
 				\pi(\widetilde\Omega)\setminus
 		\big([0,\delta_k]\times\R\big)
 		\big ) \Big ).
 	\end{multline*}
 	Having both relations, let us now prove 
 	that 
 	$
 		\imT(\vec v, E_k) \subset 
 		\imT\big ( \vec v, 
 		\pi(\widetilde\Omega)\setminus
 		\big([0,\delta_k]\times\R\big)
 		\big ).
 	$
 	Let $\vec y_0' \in \imT(\vec v,E_k)$. By \eqref{eq:pre-excision1},
 	the excision property gives that
	 	\begin{equation}
 			\label{eq:excision311}
 		\deg(\vec v, E_k, \vec y_0')
 		+ \deg (\vec v, 
 		\pi(\widetilde \Omega) \setminus\big (\overline{E_k}\cup ([0,\delta_k]
 		\times \R)\big ),
 		\vec y_0')
 		=
 		\deg (\vec v, \pi(\widetilde\Omega)\setminus
 		\big([0,\delta_k]\times\R\big),
 		\vec y_0').
 	\end{equation}

 	Since the restriction of $\vec v$ to 
 	$\pi(\widetilde \Omega) \setminus\big (\overline{E_k}\cup ([0,\delta_k]
 		\times \R)\big )$
 	is an 
 	orientation-preserving diffeomorphism
 	(given by the planar function corresponding
 	to the axisymmetric boundary data $\vec b$),
 	then 
 	a proof similar to that 
 		of \eqref{eq:pre-excision1} shows that
 	$\vec y_0'\notin \vec v \Big (\pi(\widetilde \Omega) \setminus\big (\overline{E_k}\cup ([0,\delta_k]
 		\times \R)\big )\Big )$.
 	Hence, the second degree in 
 	\eqref{eq:excision311}
 	is zero. 
 	By definition of $\imT(\vec v, E_k)$, 
 	the first degree in \eqref{eq:excision311} is 1,
 	hence $\vec y_0'\in \imT\big (\vec v, \pi(\widetilde\Omega)\setminus
 		\big([0,\delta_k]\times\R\big)
 		\big )$, as desired.
 		
 	The proof that 
 	$
 		\pi \Big ( \vec b \big (\widetilde\Omega\setminus (\Omega \cup \overline C_{\delta_k})\big ) \Big ) \subset 
 		\imT\big ( \vec v, 
 		\pi(\widetilde\Omega)\setminus
 		\big([0,\delta_k]\times\R\big)
 		\big )
 	$
 	is easier, because in that region $\vec v$
 	is dictated by the diffeomorphism $\vec b$. 
 	Finally,
 	to prove that 
 	$$\imT\Big ( \vec v, 
 		\pi(\widetilde\Omega)\setminus
 		\big([0,\delta_k]\times\R\big)\Big )
 		\setminus 
 		\pi \Big ( \vec b \big (\widetilde\Omega\setminus (\Omega \cup \overline C_{\delta_k})\big ) \Big )
 		\subset \imT(\vec v, E_k),$$
 	suppose that $\vec y_0'$ belongs
 	to the set on the left. 
 	Then, by \eqref{eq:excision311},
 	$$
	 	\deg(\vec v, E_k, \vec y_0')
 		+ \deg (\vec v, 
 		\pi(\widetilde \Omega) \setminus\big (\overline{E_k}\cup ([0,\delta_k]
 		\times \R)\big ),
 		\vec y_0')
 		\ne 0.
 	$$
 	The second term is zero
 	since
 	$\vec v$ restricted to 
 	$\pi(\widetilde \Omega) \setminus\big (\overline{E_k}\cup ([0,\delta_k]
 		\times \R)\big )$
 	is a diffeomorphism
 	and 
 	$\vec y_0'\notin \pi \Big ( \vec b \big (\widetilde\Omega\setminus (\Omega \cup \overline C_{\delta_k})\big ) \Big )$,
 	a set which contains 
 	$\vec v\Big ( \pi(\widetilde \Omega) \setminus\big (\overline{E_k}\cup ([0,\delta_k]
 		\times \R)\big )\Big )$.
 	Consequently, the first degree is nonzero and
 	$\vec y_0'\in \imT(\vec v, E_k)$,
 	finishing the proof of this Part III.

\medskip
\underline{Part IV: $\mathcal{H}^2$-continuity of $\widehat{u_\alpha^{-1}}$.}
For each $k\in\N$
define $w_k: \pi(\widetilde \Om_{\vec b}) \to \R$ as
\[
 w_{k} = \begin{cases}
 v^{-1}_1 & \text{in } 
 	\imT\big ( \vec v, 
 		\pi(\widetilde\Omega)\setminus
 		\big([0,\delta_k]\times\R\big), \\
 \d_k & \text{otherwise,}
 \end{cases}
\]
where $v_1^{-1}$ is defined in terms of $\vec b^{-1}$ in 
$\pi \Big ( \vec b \big (\widetilde\Omega\setminus (\Omega \cup \overline C_{\delta_k})\big ) \Big )$.
By Parts II and III, the function $w_k$ is  continuous at every point in
$\pi(\widetilde\Omega_{\vec b})\setminus T$. 
Since
\begin{equation*}
\sup_{\pi(\widetilde\Omega_{\vec b})\setminus T} |w-w_k|=\delta_k,
\end{equation*}
we have that $w_k \to w$ uniformly 
 in
$\pi(\widetilde\Omega_{\vec b})\setminus T$
as $k \to \infty$.

For the bound $|w-w_k|\leq \delta_k$ 
	in the image of $\pi(\widetilde \Omega) \cap \big ((0,\delta_k]\times \R\big )$
	we use that 
	if $(r,x_3)\in \pi(\Omega)$ with $r>0$ and $\vec v(r,x_3) \in \pi(\Omega_{\vec b})\setminus T$ then $\vec v(r,x_3)\in \pi\big ( \imT(\vec u, \widetilde \Omega\setminus L)\big )$, 
	$w$ is continuous at $\vec v(r,x_3)$, and
	$w\big (\vec v(r,x_3)\big ) = r$.
	That can be proved similarly as Part II,
	finding $k$ such that $\delta_k < r$
	and
	assuming that $(s,y_3)=\vec v(r,x_3)$ is both on
	$\vec v(\partial E_k)$ and on 
	$\vec v(\partial E_{k+1})$.

Therefore, $w|_{\pi(\widetilde\Omega_{\vec b})\setminus T}$
 is continuous.
Since \(\widehat{u^{-1}_1} \vec e_1+ \widehat{u^{-1}_2}\vec e_2=w(\cos \theta \vec e_1+\sin \theta \vec e_2)\) we have then that $\widehat{u^{-1}_\alpha}$ 
has a precise representative 
whose restriction to the complement of a certain set of zero $\mathcal{H}^2$-measure
(the preimage by $\pi$ of $T$)
is continuous.

	\underline{Part V: Sobolev regularity of $\widehat{u_\alpha^{-1}}$.}
	Let $V_k\Subset \widetilde \Omega$ be the good (3D) open set such that
	$\pi(V_k)=E_k$.
	By \cite[Prop.~2.17.(vi)]{HeMo12}, 
	$\imT(\vec u, V_k)$
	has finite perimeter and $\partial^*\imT(\vec u, V_k) = \imG(\vec u, \partial V_k)$ $\mathcal{H}^2$-a.e. 
	The set $\vec b\big ( \widetilde \Omega \setminus (\Omega \cup \overline C_{\delta_k})\big )$
	also has finite perimeter
	since $\vec b$ is diffeomorphism up to the boundary of $\widetilde \Omega$. 
	By Part III and Lemma \ref{le:imTuimTv},
	\begin{align}
			\label{eq:imTCdelta}
		\imT(\vec u, V_k) \cup \vec b
		\big ( \widetilde \Omega \setminus (\Omega \cup \overline C_{\delta_k})\big )
		= \imT(\vec u, \widetilde\Omega
		\setminus \overline C_{\delta_k})
	\end{align}
	for every $k\in \N$. 
	Hence, 
	$\imT(\vec u, \widetilde\Omega
		\setminus \overline C_{\delta_k})$
	is a set of finite perimeter. 
	By Lemma \ref{le:u-1} the map 
	$\vec u^{-1}_{V_k}: \R^3 \to \R^3$
	given by
	$$
		\vec u^{-1}_{V_k}(\vec y)=
		\begin{cases}
			\vec u^{-1}(\vec y),
			& \vec y \in \imT(\vec u, V_k), \\
			\vec 0, & \vec y \in \R^3 \setminus 
			\imT(\vec u, V_k)
		\end{cases}
	$$
	is in $SBV(\R^3, \R^3)$
	and 
	$D\vec u^{-1}_{V_k} \res \imT(\vec u, V_k)$ is absolutely continuous. 	
	Applying Proposition \ref{le:BVglue}
	to $\vec u_{V_k}^{-1}$ and 
	to $\vec b^{-1}$,
	with $\imT(\vec u, V_k)$
		as the set of finite perimeter
		in the hypotheses of that gluing theorem,
	we obtain that the map
	$$
		\vec y\in \widetilde \Omega_{\vec b}
		\mapsto 
		\begin{cases}
			\vec u^{-1}(\vec y),
			& 
			\vec y \in \imT(\vec u, V_k) \\
		\vec b^{-1}(\vec y),
			& \vec y \in 
			\widetilde \Omega_{\vec b} \setminus  \imT(\vec u, V_k) 
		\end{cases}
	$$
	is in $SBV(\widetilde\Omega_{\vec b}, \R^3)$,
	with derivative given by 
	\begin{multline*}
		D\vec u^{-1}_{V_k}\res \imT(\vec u, V_k)
		+ D\vec b^{-1}\res \big ( \widetilde\Omega_{\vec b} \setminus \overline{\imT(\vec u, V_k)}\big )
		\\
		+ \big ( (\vec u^{-1})^+
			- (\vec b^{-1})^-\big )
			 \vecg\nu_{\imT(\vec u, V_k)} 
			 \mathcal H^2\res\imG(\vec u, \partial V_k).
	\end{multline*}
	(The set $\imT(\vec u, V_k)$ has neither density zero nor one
	at $\mathcal H^2$-a.e.\ point in
	$\partial \imT(\vec u, V_k)$
	thanks to \cite[Lemma 2.5]{MuSp95})
	Since $\vec u=\vec b$ in $\Omega_D$,
	taking \eqref{eq:imTCdelta}
	into account,
	the map can be rewritten as
	$$
		\vec y\in \widetilde \Omega_{\vec b}
		\mapsto 
		\begin{cases}
			\vec u^{-1}(\vec y),
			& 
			\vec y \in 
			\imT(\vec u, \widetilde\Omega
		\setminus \overline C_{\delta_k}) \\
		\vec b^{-1}(\vec y),
			& \text{otherwise}, 
		\end{cases}
	$$
	with a corresponding rewriting for the derivative.
	At this point,
	taking into account 
	\eqref{eq:imTCdelta},
	we apply Proposition \ref{le:BVglue}
	again, now to the 
	first two components of the above map
	and to
	the function 
	$$
		\vec y=(s\cos\theta, s\sin\theta, y_3)
		\in \widetilde\Omega_{\vec b}
		\mapsto 
		\delta_k (\cos\theta, \sin \theta)
	$$
	(which belongs to $W^{1,1}(\widetilde \Omega_{\vec b}, \R^2)$),
	with 
	$\imT(\vec u, \widetilde\Omega
		\setminus \overline C_{\delta_k})$
 	as the set of finite perimeter in the
 	hypothesis of that gluing theorem,
	to find that
	the map
	$\vec W_k:\widetilde\Omega_{\vec b}\to \R^2$
	given by
	\begin{align}
			\label{eq:w_k_3D}
		\vec W_k:\vec y=(s\cos\theta, s\sin\theta, y_3)
		\in \widetilde\Omega_{\vec b}
		\mapsto 
		\begin{cases}
			\big ( u_1^{-1}(\vec y),
			u_2^{-1}(\vec y) \big ) 
			& \vec y  \in 
			\imT(\vec u, \widetilde\Omega
		\setminus \overline C_{\delta_k})\\
			\delta_k(\cos\theta, \sin\theta)& 
			\text{otherwise,}
		\end{cases}
	\end{align}
	is in $SBV(\widetilde \Omega_{\vec b}, \R^2)$,
	with derivative given by
	\begin{multline*}
		D(u_1^{-1}, u_2^{-1})
		\res \imT(\vec u, \widetilde \Omega \setminus \overline C_{\delta_k})
		+ \delta_k (-\sin \theta, \cos\theta)
		\otimes D\theta \res 
		\big (\widetilde\Omega_{\vec b} \setminus 
		\overline{\imT(\vec u, \widetilde \Omega \setminus \overline C_{\delta_k})}\big )
		\\
		+ 
		\big ( (u_1^{-1}, u_2^{-1})^+ - 
		\delta_k(\cos \theta, \sin \theta)\big )
		\vecg\nu_{\imT(\vec u, \widetilde \Omega \setminus \overline C_{\delta_k})}
		\mathcal H^2\res \big (\widetilde\Omega_{\vec b}\cap \partial^*
			\imT(\vec u, \widetilde \Omega \setminus \overline C_{\delta_k}) \big ).
	\end{multline*}
	However,
	by Lemma \ref{le:imTuimTv},
	the 
	radial component 
	of what would be the 
	planar map 
	corresponding to $\vec W_k$
	in \eqref{eq:w_k_3D}
	is precisely $w_k$, so
	by Part IV we know that
	the jump 
	$
	(u_1^{-1}, u_2^{-1})^+ - 
		\delta_k(\cos \theta, \sin \theta)
		$
		is zero
		for $\mathcal H^2$-a.e.\@
		point on 
		$\widetilde\Omega_{\vec b}\cap \partial^*
			\imT(\vec u, \widetilde \Omega \setminus \overline C_{\delta_k})$.
	Therefore,
	the maps $\vec W_k$
	under consideration
	belong to $W^{1,1}(\widetilde\Omega_{\vec b})$.
	
	The uniform convergence $w_k\to w$
	of Part IV translates, in particular,
	 into
	the a.e.\ convergence of the maps $\vec W_k$ in \eqref{eq:w_k_3D}
	to the map $(\widehat{u_1^{-1}},\widehat{u_2^{-1}})$
	in the statement of the proposition. 
	On the other hand, 
	Lemma \ref{le:uvSobolev}
	shows that
	the gradients of the maps $\vec W_k$
	are equiintegrable because  
		\begin{equation*}
	\int_A \left| D \vec W_k \right| \dd \vec y \leq \int_{A\cap \imT(\vec u, \widetilde \O \setminus L)} | D \vec u^{-1} | \, \dd \vec y
	+
	\int_{A\cap  \widetilde \O_{\vec b}} \delta_k |D\theta| \,\dd\vec y
	\end{equation*}
	for any measurable subset $A\subset\pi(\widetilde \Om_{\vec b})$. 
	Therefore, the limit 
	$(\widehat{u_1^{-1}},\widehat{u_2^{-1}})$
	also belongs to $W^{1,1}(\widetilde \Omega_{\vec b}, \R^2)$,
	finishing the proof.
\end{proof}

\section{Weak limits of regular maps}\label{sec:weak_limits}
We investigate here the properties of maps in $\overline{\Asr}$: the weak $H^1$ closure
of the class of regular maps. 
We start by proving that $\overline{\Asr}$ is contained in the space $\mathcal B$ defined in \eqref{eq:defB}.

\begin{theorem}\label{prop:image_ouverte}
Let $\vec u \in \overline{\Asr}$. Then 
\begin{enumerate}[i)]
\item $\vec u$ belongs to $\As$. 

\item\label{item:ouverteii} 

$\imG(\vec u,\Om)= \Om_{\vec b}$ a.e.\@ 
and $\mathcal{L}^3 (\imT(\vec u,L)) = 0$.

\item\label{item:ouverteiii} $\vec u^{-1}\in BV(\widetilde\O_{\vec b},\R^3)$ 
and $ \supp D^s \vec u^{-1} \subset \imT (\vec u,L)$.
Moreover, $\| \vec u^{-1} \|_{BV(\widetilde\Om_{\vec b},\R^3)} \leq M$ for some $M>0$ not depending on $\vec u$.

\item\label{item:ouverteiv} $u^{-1}_\alpha\in W^{1,1}(\widetilde\Om_{\vec b})$ for $\alpha=1,2$.
\end{enumerate}
\end{theorem}

\begin{proof}

	Let $\{ \vec u_n \}_{n\in \N} \subset \Asr$ satisfy $\vec u_n \rightharpoonup \vec u$ in $H^1(\widetilde{\Om},\R^3)$.
	 By Proposition \ref{prop:closedeness_of_Asym}, $\vec u \in \As$,
	 $\det D\vec u_n\weakc \det D\vec u$
	 in $L^1(\tilde \Omega)$,
	 and
	 $\chi_{\imG(\vec u_n, \widetilde \Omega)}\to \chi_{\imG(\vec u, \widetilde \Omega)}$ a.e.
	Now, by Proposition \ref{INV}.(b),
	$\imG(\vec u_n, \widetilde \Omega) = \widetilde\Omega_{\vec b}$ a.e.\@ for 
	every $n\in \N$, 
	hence $\vec u$ inherits this property.
	By \eqref{le:imL_union_imOminusL} and Lemma \ref{le:contained_in_target_domain}\,\ref{eq geo=top}), 
	it then follows that $\mathcal L^3(\imT(\vec u, L))=0$, completing the proof of \emph{\ref{item:ouverteii})}.

From Proposition \ref{INV2} we have that $\vec u_n^{-1} \in W^{1,1}(\widetilde \Om_{\vec b},\R^3)$ for all $n \in \N$ and
\begin{align*}
\|D \vec u_n^{-1}\|_{L^1(\widetilde\Om_{\vec b},\R^{3\times 3})} 
&= \left\| \cof D \vec u_n \right\|_{L^1(\widetilde\Om,\R^{3\times 3})}
\\ &\leq \|D \vec u_n\|^2_{L^2(\Om,\R^{3\times 3})}
+ \|D\vec b\|^2_{L^2(\Omega_D;\R^{3\times 3})}
  \leq E (\vec u_n) + C \leq E (\vec b) + C ,
\end{align*}
where we have used Lemma \ref{eq:ineq-energy-area}.

On the other hand the image of each $\vec u_n^{-1}$ is contained in $\O$, so $\| \vec u_n^{-1} \|_{L^{\infty} (\widetilde{\O}_{\vec b}, \R^3)}$ and, hence $\| \vec u_n^{-1} \|_{L^1 (\widetilde{\O}_{\vec b}, \R^3)}$ are bounded by a constant only depending on $\O$ and $\widetilde{\O}_{\vec b}$.
Thus, by the theorem of compactness in $BV$ we find that, up to a subsequence, there exists $\vec w \in BV(\widetilde\Om_{\vec b},\R^3)$ such that $\vec u_n^{-1} \rightarrow \vec w$ in $L^1(\widetilde \Om_{\vec b},\R^3)$ and a.e.\ in $\widetilde\Omega_{\vec b}$. 
By Lemma \ref{le:convu-1}, $\vec w=\vec u^{-1}$ a.e\@.
Finally, by Lemma \ref{le:u-1} we have $ \supp D^s \vec u^{-1} \subset \imT (\vec u,L)$.
This shows \emph{\ref{item:ouverteiii})}.

Since $\mathcal{L}^3 (\imT(\vec u,L)) = 0$, the functions $u^{-1}_\alpha$ and $\widehat{u^{-1}_\alpha}$ (see Proposition \ref{regularity first 2 components}) coincide a.e\@.
Thus, by Proposition \ref{regularity first 2 components}, $u^{-1}_\alpha\in W^{1,1}(\widetilde \Om_{\vec b})$, which shows \emph{\ref{item:ouverteiv})}.
\end{proof}

For $\vec u\in \overline{\Asr}$ we have, by Theorem \ref{prop:image_ouverte}, that $\vec u^{-1}\in BV(\widetilde\Om_{\vec b},\R^3)$ and we introduce the following standard decomposition of the distributional derivative of $\vec u^{-1}$:
\begin{equation*}
D \vec u^{-1}= \nabla \vec u^{-1} +D^s \vec u^{-1}=\nabla \vec u^{-1}+D^j\vec u^{-1}+D^c \vec u^{-1} ;
\end{equation*}
see, e.g., \cite[Sect.\ 3.9]{AmFuPa00}.
In this decomposition $\nabla \vec u^{-1}$ denotes the absolutely continuous part of $D \vec u^{-1}$ with respect to the Lebesgue measure, $D^s \vec u^{-1}$ is the singular part which can be furthermore decomposed in a jump part $D^j \vec u^{-1}$ and a Cantor part $D^c \vec u^{-1}$.
Moreover, we denote by $J_{\vec u^{-1}}$ the set of jump points of $\vec u^{-1}$.
We fix a Borel orientation $\vec \nu$ of $J_{\vec u^{-1}}$, and, with respect to this orientation, the lateral traces of $\vec u^{-1}$ are denoted by $(\vec u^{-1})^+$ and $(\vec u^{-1})^-$.
Analogously, the jump is defined as $[\vec u^{-1}] := (\vec u^{-1})^+ - (\vec u^{-1})^-$.

The following lemma, which uses many ideas of \cite[Th.\ 2]{HeMo11}, relates the surface energy \(\E_{\vec u}\) with the singular part of \(D \vec u^{-1}\).
With a small abuse of notation, given $\phi \in C^1_c (\O)$, we define $[ \phi \circ \vec u^{-1}]$ in $J_{\vec u^{-1}}$ as $\phi \circ (\vec u^{-1})^+ - \phi \circ (\vec u^{-1})^-$.
Recall that 
 \(\vec u^{-1}\) initially is defined only 
 on \(\imG(\vec u, \widetilde \O)\) 
 but if 
 \(\imG(\vec u,\widetilde\Om)=\widetilde \Om_{\vec b}\) a.e.\@
 then $\vec u^{-1}$ is defined a.e.\@
 in the open set $\widetilde \Omega_{\vec b}$.  Assume that $\vec u^{-1}$ is the precise representative of itself.

\begin{lemma}\label{lem:linkBV}
Let \(\vec u\in H^1 (\widetilde \Om,\R^3) \cap L^\infty(\Om,\R^3) \) be such that \(\det D\vec u\in L^1(\widetilde \Om)\) and $\det D \vec u > 0$ a.e\@. Let \(\phi\in C^1_c(\widetilde\Om) \) and \(\vec g\in C^1_c(\R^3, \R^3) \). 
Suppose that \(\imG(\vec u,\widetilde\Om)=\widetilde \Om_{\vec b}\) a.e., $\vec u$ is injective a.e.\ and
 \(\vec u^{-1} \in BV(\widetilde \Om_{\vec b},\R^3)\).
Then
\begin{align*}
\overline{\E}_{\vec u }(\phi,\vec g) &= - \langle D^s (\phi \circ \vec u^{-1}),\vec g \rangle  \\
& = - \int_{\widetilde \Om_{\vec b}} \nabla \phi(\vec u^{-1}(\vec y))\otimes \vec g(\vec y)\cdot \dd D^c \vec u^{-1}(\vec y)- \int_{J_{\vec u^{-1}}} [ \phi \circ \vec u^{-1}] \, \vec g \cdot \vec \nu \, \dd \mathcal{H}^2 .
\end{align*}
\end{lemma}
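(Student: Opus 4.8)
The plan is to reduce everything to a change of variables in the integral defining $\overline{\E}_{\vec u}(\phi,\vec g)$, transport the computation to the deformed configuration $\widetilde\Om_{\vec b}$ via the inverse $\vec u^{-1}$, and then recognize the resulting expression as a distributional derivative of $\phi\circ\vec u^{-1}$ tested against $\vec g$. First I would observe that, since $\det D\vec u>0$ a.e.\ and $\vec u$ is injective a.e.\ with $\imG(\vec u,\widetilde\Om)=\widetilde\Om_{\vec b}$ a.e., the area formula (Proposition \ref{prop:area-formula}, applied on $\O_0$ as in Definition \ref{def:Om0}) applies. Using the pointwise identity $\adj D\vec u(\vec x)=D\vec u(\vec x)^{-1}\det D\vec u(\vec x)$ at approximate differentiability points, the integrand of $\overline{\E}_{\vec u}(\phi,\vec g)$ can be written as $\det D\vec u(\vec x)$ times a function of $\vec x$ expressed through $\vec u(\vec x)$; applying the area/change of variables formula this becomes an integral over $\widetilde\Om_{\vec b}$ of $\nabla\phi(\vec u^{-1}(\vec y))$ contracted with $(D\vec u)^{-1}(\vec u^{-1}(\vec y))$ against $\vec g(\vec y)$, plus the divergence-of-$\vec g$ term.

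The next step is the key algebraic manipulation: one recognizes that $\nabla\phi(\vec u^{-1}(\vec y))\cdot (D\vec u)^{-1}(\vec u^{-1}(\vec y))$ is exactly the absolutely continuous part $\nabla(\phi\circ\vec u^{-1})(\vec y)$, by the chain rule for the a.c.\ part together with the formula $D\vec u^{-1}(\vec u(\vec x))=(D\vec u(\vec x))^{-1}$ valid a.e.\ (as in \eqref{eq:formulaDu-1}, though here only used for the a.c.\ part). Thus the transported integral reads $\int_{\widetilde\Om_{\vec b}}\big[\nabla(\phi\circ\vec u^{-1})\cdot\vec g + (\phi\circ\vec u^{-1})\,\div\vec g\big]\,\dd\vec y$. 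Now integrate by parts in $BV$: since $\phi\circ\vec u^{-1}\in BV(\widetilde\Om_{\vec b})$ (because $\phi\in C^1_c$ and $\vec u^{-1}\in BV\cap L^\infty$, using that $\phi$ has compact support in $\widetilde\Om$ so no boundary term appears — one should be slightly careful that $\vec u^{-1}$ maps into $\widetilde\Om$, so $\phi\circ\vec u^{-1}$ is compactly supported away from $\partial\widetilde\Om_{\vec b}$, or rather that $\vec g\in C^1_c(\R^3)$ can be localized), the Gauss--Green formula in $BV$ gives $\int\nabla(\phi\circ\vec u^{-1})\cdot\vec g = -\int(\phi\circ\vec u^{-1})\,\div\vec g - \langle D^s(\phi\circ\vec u^{-1}),\vec g\rangle$. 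Substituting, the two full-derivative pieces cancel against the $\div\vec g$ term and we are left with $\overline{\E}_{\vec u}(\phi,\vec g)=-\langle D^s(\phi\circ\vec u^{-1}),\vec g\rangle$.

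Finally, to obtain the second displayed equality I would invoke the structure theorem for $BV$ functions and the chain rule in $BV$ (Vol'pert / Ambrosio--Dal Maso; see \cite[Sect.\ 3.9]{AmFuPa00}): the singular part $D^s(\phi\circ\vec u^{-1})$ decomposes into its Cantor part, which by the chain rule equals $\nabla\phi(\vec u^{-1})\,D^c\vec u^{-1}$ (contracted appropriately, hence the tensor notation $\nabla\phi(\vec u^{-1}(\vec y))\otimes\vec g(\vec y)\cdot\dd D^c\vec u^{-1}(\vec y)$), and its jump part, which is concentrated on $J_{\vec u^{-1}}$ with density $[\phi\circ\vec u^{-1}]\,\vec\nu\,\dd\mathcal H^2$. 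Pairing with $\vec g$ yields exactly the two integrals in the statement. The main obstacle I anticipate is twofold: (i) justifying rigorously the change of variables that transports $\overline{\E}_{\vec u}$ to the deformed configuration — this requires the $L^\infty$ bound on $\vec u$, Lusin's condition (N) for $\vec u|_{\O_d}$ and (N$^{-1}$) (Lemma \ref{le:Lusin}), and care that $\vec u^{-1}$ is only a priori defined on $\imG(\vec u,\widetilde\Om)$, which however equals $\widetilde\Om_{\vec b}$ a.e.\ by hypothesis; and (ii) applying the $BV$ chain rule when $\vec u^{-1}$ is merely $BV$ and not $SBV$, so that one must handle the Cantor part — here one uses the version of the chain rule for $\phi$ of class $C^1$ composed with a $BV$ map, which is exactly the content of \cite[Thm.\ 3.96]{AmFuPa00}. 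These are the steps where the ideas of \cite[Th.\ 2]{HeMo11} alluded to in the statement will be needed, essentially to control the interplay between the approximate differentiability of $\vec u$ on $\O_0$ and the fine structure of $\vec u^{-1}$ on $\widetilde\Om_{\vec b}$.
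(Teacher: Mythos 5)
Your proposal follows essentially the same route as the paper's proof: a change of variables transports $\overline{\E}_{\vec u}(\phi,\vec g)$ to $\widetilde\Om_{\vec b}$, the formula $\nabla\vec u^{-1}(\vec u(\vec x))=D\vec u(\vec x)^{-1}$ identifies the first integrand with $\nabla(\phi\circ\vec u^{-1})\cdot\vec g$, the $\div\vec g$ term is recognized (by definition of the distributional derivative, which is the content of your "Gauss--Green" step) as $-\langle D(\phi\circ\vec u^{-1}),\vec g\rangle$, and the two combine to give $-\langle D^s(\phi\circ\vec u^{-1}),\vec g\rangle$, with the $BV$ chain rule of \cite[Thm.~3.96]{AmFuPa00} yielding the final decomposition into Cantor and jump parts. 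The concerns you flag about the hypotheses needed for the change of variables and about handling a general $BV$ (not $SBV$) inverse are precisely the ones the paper's argument addresses, so the match is complete.
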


\begin{proof}
By the change of variables formula and using that \(\imG(\vec u,\widetilde\Om)=\widetilde \Om_{\vec b}\) a.e., we find
\begin{equation}\label{eq:link1}
 \overline{\E}_{\vec u }(\phi,\vec g) =\int_{\widetilde \Om_{\vec b}} \left[ \vec g (\vec y) \cdot D \vec u (\vec u^{-1} (\vec y))^{-T} D \phi(\vec u^{-1} (\vec y)) + \phi(\vec u^{-1} (\vec y)) \div \vec g(\vec y) \right] \dd \vec y.
\end{equation}
By the chain rule for $BV$ functions (see, e.g., \cite[Th.\ 3.96]{AmFuPa00}), $\phi \circ \vec u^{-1} \in BV (\widetilde{\O}_{\vec b}, \R^3)$ and
\begin{equation}\label{eq:chain}
\begin{split}
 & \nabla (\phi \circ \vec u^{-1}) = \nabla \phi (\vec u^{-1}) \, \nabla \vec u^{-1} , \\
 & D^s (\phi \circ \vec u^{-1}) = \nabla \phi (\vec u^{-1}) D^c \vec u^{-1} + [\phi \circ \vec u^{-1} ] \otimes \vec \nu_{\vec u} \mc{H}^{n-1} \res_{J_{\vec u^{-1}}} .
\end{split}
\end{equation}
By Lemma \ref{le:u-1}, $\nabla \vec u^{-1} (\vec u (\vec x)) = \nabla \vec u (\vec x)^{-1}$ for a.e.\ $\vec x \in \widetilde\O$. This and \eqref{eq:chain} imply that, for a.e.\ $\vec y \in \tilde{\O}_{\vec b}$,
\[
 \nabla (\phi \circ \vec u^{-1}) (\vec y) = \nabla \phi (\vec u^{-1} (\vec y)) \, \nabla \vec u (\vec u^{-1} (\vec y))^{-1} .
\]
Therefore,
\begin{equation}\label{eq:link2}
 \int_{\widetilde\Om_{\vec b}} \vec g (\vec y) \cdot D \vec u (\vec u^{-1} (\vec y))^{-T} D \phi(\vec u^{-1} (\vec y)) \, \dd \vec y 
 = \int_{\widetilde\Om_{\vec b}} \nabla (\phi \circ \vec u^{-1}) (\vec y) \cdot \vec g (\vec y) \, \dd \vec y = \langle \nabla (\phi \circ \vec u^{-1} ), \vec g \rangle .
\end{equation}
On the other hand, by definition of distributional derivative,
\begin{equation}\label{eq:link3}
 \int_{\widetilde\Om_{\vec b}} \phi(\vec u^{-1} (\vec y)) \div \vec g(\vec y) \, \dd \vec y = - \langle D (\phi \circ \vec u^{-1}),\vec g \rangle .
\end{equation}
Putting together \eqref{eq:link1}, \eqref{eq:link2} and \eqref{eq:link3} we obtain
\[
 \overline{\E}_{\vec u }(\phi,\vec g) = \langle \nabla (\phi \circ \vec u^{-1} ) - D (\phi \circ \vec u^{-1}),\vec g \rangle = - \langle D^s (\phi \circ \vec u^{-1} ) ,\vec g \rangle .
\]
This, together with \eqref{eq:chain}, concludes the proof.
\end{proof}

Recall the definition of the singular segment 
$L=\overline \Omega \cap \R\vec e_3$ in
\eqref{def:segment_L}.

\begin{proposition}
Let \( \vec u\in \overline{\Asr}\).
Then $\vec u^{-1}(\vec y) \in L$ for $|D^c \vec u^{-1}|$-a.e.\ $\vec y \in \widetilde\Om_{\vec b}$ and 
\break
$(\vec u^{-1})^{\pm} (\vec y) \in L$ for $\mathcal{H}^2$-a.e.\ $\vec y \in J_{\vec u^{-1}}$.
\end{proposition}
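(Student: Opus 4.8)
The plan is to exploit the relation between the surface energy $\mathcal{E}_{\vec u}$ and the singular part of $D\vec u^{-1}$ established in Lemma \ref{lem:linkBV}, together with the fact (Lemma \ref{INV3}.(a) and Lemma \ref{le:u-1}) that a map $\vec u\in\overline{\Asr}\subset\As$ has zero surface energy away from the singular segment $L$, i.e., $\mathcal{E}(\vec u,\widetilde\Omega\setminus L)=0$. First I would fix $\vec u\in\overline{\Asr}$; by Proposition \ref{prop:image_ouverte} we have $\vec u\in\As$, $\imG(\vec u,\widetilde\Omega)=\widetilde\Omega_{\vec b}$ a.e., $\vec u$ is injective a.e., and $\vec u^{-1}\in BV(\widetilde\Omega_{\vec b},\R^3)$, so all hypotheses of Lemma \ref{lem:linkBV} are met, and also $\supp D^s\vec u^{-1}\subset\imT(\vec u,L)$. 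The idea is then to test the identity in Lemma \ref{lem:linkBV} with a scalar $\phi\in C^1_c(\widetilde\Omega)$ supported in $\widetilde\Omega\setminus L$ and an arbitrary $\vec g\in C^1_c(\R^3,\R^3)$: since $\overline{\mathcal{E}}_{\vec u}(\phi,\vec g)=0$ for such $\phi$ (because $\mathcal{E}(\vec u,\widetilde\Omega\setminus L)=0$, hence $\overline{\mathcal{E}}(\vec u|_V)=0$ on any open $V\Subset\widetilde\Omega\setminus L$ containing $\spt\phi$, via Lemma \ref{lem:link_Ebar_E} and the remark that $\overline{\mathcal{E}}\le\mathcal{E}$), Lemma \ref{lem:linkBV} yields
\[
 \int_{\widetilde\Omega_{\vec b}}\nabla\phi(\vec u^{-1}(\vec y))\otimes\vec g(\vec y)\cdot\dd D^c\vec u^{-1}(\vec y)
 + \int_{J_{\vec u^{-1}}}[\phi\circ\vec u^{-1}]\,\vec g\cdot\vecg\nu\,\dd\mathcal{H}^2 = 0
\]
for all such $\phi,\vec g$.

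Next I would extract the pointwise conclusions from this vanishing. Since $\vec g$ is arbitrary, the two integrands must separately vanish as measures: more precisely, one concludes that $\nabla\phi(\vec u^{-1}(\vec y))=0$ for $|D^c\vec u^{-1}|$-a.e.\ $\vec y$ and $[\phi\circ\vec u^{-1}](\vec y)=0$ for $\mathcal{H}^2\!\!\restriction_{J_{\vec u^{-1}}}$-a.e.\ $\vec y$, for every fixed $\phi\in C^1_c(\widetilde\Omega\setminus L)$. (To split the two contributions one notes $|D^c\vec u^{-1}|$ and $\mathcal{H}^2\!\!\restriction J_{\vec u^{-1}}$ are mutually singular; alternatively, localize $\phi$ near a point of $\imT(\vec u,L)$ and use a density/separability argument over $\vec g$.) Now take a countable family $\{\phi_m\}\subset C^1_c(\widetilde\Omega\setminus L)$ that is dense in a suitable sense and, in particular, for every compact $K\subset\widetilde\Omega\setminus L$ and every point $\vec x_0\in K$ separates $\vec x_0$ from points outside any prescribed neighborhood. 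For $|D^c\vec u^{-1}|$-a.e.\ $\vec y$ one then has: either $\vec u^{-1}(\vec y)\in L$, or $\vec u^{-1}(\vec y)$ lies in $\widetilde\Omega\setminus L$, in which case we may choose some $\phi_m$ with $\nabla\phi_m(\vec u^{-1}(\vec y))\neq 0$, a contradiction. Hence $\vec u^{-1}(\vec y)\in L$ for $|D^c\vec u^{-1}|$-a.e.\ $\vec y$. Similarly, if for a set of positive $\mathcal{H}^2$-measure of $\vec y\in J_{\vec u^{-1}}$ one of the two traces, say $(\vec u^{-1})^+(\vec y)$, were in $\widetilde\Omega\setminus L$, then (using that the traces are $\mathcal{H}^2$-measurable and that on $\imT(\vec u,L)$ the jump set lives) one picks a $\phi_m$ equal to $1$ near $(\vec u^{-1})^+(\vec y)$ and $0$ near $(\vec u^{-1})^-(\vec y)$ whenever the latter is also off $L$, forcing $[\phi_m\circ\vec u^{-1}]\neq 0$ on a positive-measure set — again a contradiction; and if one trace is off $L$ while the other is on $L$, the same device (with $\phi_m=1$ near the off-$L$ trace, and using that $L$ has a neighborhood basis on which $\phi_m$ may be taken $\equiv 0$) gives the contradiction. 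Therefore $(\vec u^{-1})^\pm(\vec y)\in L$ for $\mathcal{H}^2$-a.e.\ $\vec y\in J_{\vec u^{-1}}$.

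The main obstacle I anticipate is the measurable-selection step: turning ``for each fixed $\phi$ the integrand vanishes a.e.'' into ``for a.e.\ $\vec y$, $\vec u^{-1}(\vec y)$ (or its traces) lies in $L$''. This requires a careful countable exhaustion of $\widetilde\Omega\setminus L$ by compacts and a countable family of test functions adapted to it, together with the measurability of $\vec y\mapsto\vec u^{-1}(\vec y)$ and of the one-sided traces $\vec y\mapsto(\vec u^{-1})^\pm(\vec y)$ on $J_{\vec u^{-1}}$ (the latter being standard for $BV$ functions, cf.\ \cite[Sect.\ 3.6--3.7]{AmFuPa00}). A secondary point needing care is the two-trace case on the jump set when exactly one of $(\vec u^{-1})^\pm$ lies on $L$: here one must be sure the chosen $\phi_m$ can be made to vanish on a full neighborhood of $L$ while being $1$ near the off-axis trace, which is possible precisely because $L$ is compact and contained in the interior of $\widetilde\Omega$, so $\widetilde\Omega\setminus L$ is open and $L$ admits arbitrarily small open neighborhoods compactly contained in $\widetilde\Omega$. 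Once these measure-theoretic details are in place, the argument is essentially a direct reading of Lemma \ref{lem:linkBV} combined with $\mathcal{E}(\vec u,\widetilde\Omega\setminus L)=0$.
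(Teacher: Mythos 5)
Your overall plan is the same as the paper's: combine the vanishing of $\overline{\E}_{\vec u}(\phi,\vec g)$ for $\phi\in C^1_c(\widetilde\O\setminus \R\vec e_3)$ (Lemma~\ref{N condition}) with Lemma~\ref{lem:linkBV} to get identity \eqref{eq:D^s0}, then extract pointwise information by choosing test functions. The jump part of your argument is essentially right, once one records that \eqref{eq:D^s0} extends by density to bounded Borel $\vec g$ and then tests with $\vec g=\chi_E\vecg\nu$.

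There is, however, a genuine gap in the Cantor-part argument. You assert that, for fixed $\phi$, ``$\nabla\phi(\vec u^{-1}(\vec y))=0$ for $|D^c\vec u^{-1}|$-a.e.\ $\vec y$.'' That is not what follows. Writing the polar decomposition $D^c\vec u^{-1}=\vec A\,|D^c\vec u^{-1}|$ with $|\vec A|=1$ a.e., the vanishing of $\int \nabla\phi(\vec u^{-1})\otimes\vec g\cdot\dd D^c\vec u^{-1}$ for all $\vec g$ yields only $\vec A(\vec y)^T\nabla\phi(\vec u^{-1}(\vec y))=0$ for $|D^c\vec u^{-1}|$-a.e.\ $\vec y$. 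Since $\vec A(\vec y)$ need not be invertible (it is merely of unit Frobenius norm, typically of low rank), this does not give $\nabla\phi(\vec u^{-1}(\vec y))=0$, and your subsequent step ``choose $\phi_m$ with $\nabla\phi_m\ne 0$, a contradiction'' does not close. The argument can be repaired: from the countable family one picks, for each off-axis value $\vec u^{-1}(\vec y)$, three $\phi_m$ whose gradients at $\vec u^{-1}(\vec y)$ span $\R^3$, which forces $\vec A(\vec y)=0$ and contradicts $|\vec A(\vec y)|=1$. The paper avoids the measurable-selection issue entirely by arguing locally at a $|D^c\vec u^{-1}|$-Lebesgue point $\vec y_0$ of $\vec u^{-1}$: it takes $\phi(\vec x)=\psi(x_\alpha)$ on a cube $Q\ni\vec u^{-1}(\vec y_0)$ disjoint from $L$, and chooses the (bounded Borel) vector field $\vec g=\sgn\psi'\,\sgn A_{\alpha i}\,\chi_{B(\vec y_0,r)\cap U}\,\vec e_i$ so as to extract the nonnegative quantity $\int_{\{\vec u^{-1}\in Q\}\cap B(\vec y_0,r)}|\psi'(u^{-1}_\alpha)|\,|A_{\alpha i}|\,\dd|D^c\vec u^{-1}|=0$; taking $\psi=\cos$ and $\psi=\sin$, summing, and using $|\cos|+|\sin|\ge 1$ and $\sum_{\alpha,i}|A_{\alpha i}|\ge|\vec A|=1$, it concludes $|D^c\vec u^{-1}|\big(B(\vec y_0,r)\cap\{\vec u^{-1}\in Q\}\big)=0$, contradicting the Lebesgue-point property. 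You should either adopt that device or replace your incorrect ``$\nabla\phi(\vec u^{-1})=0$'' step by the spanning argument above.
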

\begin{proof}
Without loss of generality, $\vec u^{-1}$ is the precise representative of itself.

Let $\vec g \in C^1_c(\R^3,\R^3)$ and $\phi \in C^1_c(\widetilde\O \setminus \R\vec e_3)$.
Then, there exists $\d>0$ such that $\phi \in C^1_c(\widetilde\O \setminus \overline{C}_{\d})$.
By Lemma \ref{N condition}, we have that $\overline{\E}_{\vec u} (\phi,\vec g)=0$, so due to Lemma \ref{lem:linkBV} we obtain that 
\begin{equation}\label{eq:D^s0}
 \int_{\widetilde\Omega_{\vec b}} \nabla \phi(\vec u^{-1}(\vec y))\otimes \vec g(\vec y) \cdot \dd D^c \vec u^{-1}(\vec y) + \int_{J_{\vec u^{-1}}} [ \phi \circ \vec u^{-1}] \, \vec g \cdot \vec \nu \, \dd \mathcal{H}^2=0.
\end{equation}
By approximation, the previous equality, which does not involve derivatives of $\vec g$, is also valid for every bounded Borel $\vec g : \R^3 \to \R^3$.

Let $D^c \vec u^{-1}= \vec A \, |D^c \vec u^{-1}|$ be the polar decomposition of $D^c \vec u^{-1}$, so $\vec A : \widetilde{\O}_{\vec b} \to \R^{3 \times 3}$ is Borel, $|D^c \vec u^{-1}|$-integrable and $|\vec A| = 1$ in $|D^c \vec u^{-1}|$-a.e.\ $\widetilde{\O}_{\vec b}$.
Let $\vec y_0 \in \widetilde{\O}_{\vec b}$ be a $|D^c \vec u^{-1}|$-Lebesgue point of $\vec u^{-1}$, i.e.,
\begin{equation}\label{eq:DcLebesgue}
 \lim_{r \to 0^+} \frac{\int_{B(\vec y_0,r)} |\vec u^{-1} (\vec y)- \vec u^{-1} (\vec y_0) | \, \dd |D^c\vec u^{-1}|(\vec y)}{|D^c\vec u^{-1}|(B(\vec y_0,r))} = 0 ,
\end{equation}
and note that $|D^c \vec u^{-1}|$-a.e.\ point of $\widetilde{\O}_{\vec b}$ satisfies that.
Let us suppose that $\vec u^{-1}(\vec y_0) \notin L$ and take a closed cube $Q \subset \widetilde \O$ centered at $\vec u^{-1}(\vec y_0)$ with $Q \cap L = \varnothing$.
Consider the Borel set
\[
 U:=\{\vec y \in \widetilde{\O}_{\vec b} \colon \vec u^{-1}(\vec y)\in Q \text{ and } \vec u^{-1} \text{ is approximately continuous at } \vec y \} .
\]

Given any $\psi \in C^1 (\R)$, take $\phi \in C^1_c (\widetilde \O \setminus \R\vec e_3)$ such that $\phi(\vec x)=\psi(x_\alpha)$ for all $\vec x \in Q$. For $1\leq \alpha,i \leq 3$ and $r>0$ fixed, we apply \eqref{eq:D^s0} to $\vec g = \sgn \psi' \sgn A_{\alpha i} \chi_{B(\vec y_0,r) \cap U} \vec e_i$ and deduce that 
\begin{equation*}
 \int_{\{ \vec y\in B(\vec y_0,r): \vec u^{-1}(\vec y)\in Q\}} \sgn \psi' \sgn A_{\alpha i} \left( \nabla \phi (\vec u^{-1}(\vec y)) \otimes \vec e_i \right) \cdot \vec A \, \dd|D^c\vec u^{-1}|=0.
\end{equation*}
This can also be written as
\begin{equation*}
\int_{\{ \vec y\in B(\vec y_0,r): \vec u^{-1}(\vec y)\in Q\}} \left|\psi' ( u^{-1}_\alpha(\vec y)) \right| \left| A_{\alpha i} \right| \dd | D^c \vec u^{-1} | =0 .
\end{equation*}
We use the previous equality first with $\psi(t)=\cos t $ and then with $\psi(t) =\sin t$.
We sum the two equalities and use that \(|\cos t|+|\sin t|\geq 1 \) to get 
\begin{equation*}
 \int_{\{ \vec y\in B(\vec y_0,r): \vec u^{-1}(\vec y)\in Q\}} |A_{\alpha i}| \, \dd |D^c \vec u^{-1}|=0.
\end{equation*}
We then sum this equality for $1\leq \alpha,i\leq 3$ to obtain
\begin{equation*}
|D^c \vec u^{-1}|\left(\{\vec y\in B(\vec y_0,r): \vec u^{-1}(\vec y) \in Q \} \right)=0.
\end{equation*}
This equality implies that $|\vec u^{-1}(\vec y)-\vec u^{-1}(\vec y_0)| > \text{diam } Q/2$ for $|D^c\vec u^{-1}|$-a.e.\ $\vec y\in B(\vec y_0,r)$.
Being true for all $r>0$, this is a contradiction with \eqref{eq:DcLebesgue}.
Therefore, $|D^c \vec u^{-1}|$-a.e.\ $\vec y \in \widetilde{\O}_{\vec b}$ satisfies $\vec u^{-1} (\vec y) \in L$.

Now we show that $(\vec u^{-1})^{\pm} (\vec y) \in L$ for $\mathcal{H}^2$-a.e.\ $\vec y \in J_{\vec u^{-1}}$.
Let $\vec y_0 \in J_{\vec u^{-1}}$ be a $\mc{H}^2 \res_{J_{\vec u^{-1}}}$-Lebesgue point for both $(\vec u^{-1})^+$ and $(\vec u^{-1})^-$, i.e.,
\begin{equation}\label{eq:H2Lebesgue}
 \lim_{r \to 0^+} \frac{\int_{J_{\vec u^{-1}} \cap B(\vec y_0,r)} |(\vec u^{-1})^{\pm} (\vec y) - (\vec u^{-1})^{\pm} (\vec y_0) | \, \dd \mc{H}^2 (\vec y)}{\mc{H}^2 (J_{\vec u^{-1}} \cap B(\vec y_0,r))} = 0 ,
\end{equation}
and note that $\mc{H}^2$-a.e.\ point in $J_{\vec u^{-1}}$ satisfies that.
For each $r>0$, we apply \eqref{eq:D^s0} to $\vec g = \chi_{J_{\vec u^{-1}} \cap B(\vec y_0,r)} \vec \nu$ and deduce that 
\begin{equation*}
 \int_{J_{\vec u^{-1}} \cap B(\vec y_0,r)} [ \phi \circ \vec u^{-1}] \, \dd \mathcal{H}^2 = 0 .
\end{equation*}
This and \eqref{eq:H2Lebesgue} imply that $[ \phi \circ \vec u^{-1}] (\vec y_0) = 0$.
If $(\vec u^{-1})^+ (\vec y_0) \notin L$ and $(\vec u^{-1})^- (\vec y_0) \in L$, we choose $\phi \in C^1_c (\widetilde\O \setminus \R\vec e_3)$ such that $\phi (\vec u^{-1})^+ (\vec y_0) \neq 0$ and reach a contradiction with $[ \phi \circ \vec u^{-1}] (\vec y_0) = 0$.
Analogously if $(\vec u^{-1})^+ (\vec y_0) \in L$ and $(\vec u^{-1})^- (\vec y_0) \notin L$.
If $(\vec u^{-1})^+ (\vec y_0) \notin L$ and $(\vec u^{-1})^- (\vec y_0) \notin L$, we choose $\phi \in C^1_c (\widetilde\O \setminus \R\vec e_3)$ such that $\phi ((\vec u^{-1})^+ (\vec y_0)) \neq \phi ((\vec u^{-1})^- (\vec y_0))$, which contradicts $[ \phi \circ \vec u^{-1}] (\vec y_0) = 0$.
Hence, the only possibility is that $(\vec u^{-1})^+ (\vec y_0) \in L$ and $(\vec u^{-1})^- (\vec y_0) \in L$.
\end{proof}

\section{Lower bound for the relaxed energy and an explicit alternative variational problem}\label{sec:lower_bound}

In this section we study the energetic cost for a weak limit of functions in \(\Asr\) to leave \(\Asr\). 
Note that, by Conti-De Lellis' counterexample, condition INV is not satisfied, in general, by functions in \(\overline{\Asr}\). 
This is due to the lack of equiintegrability of the cofactors, so the theory of \cite{HeMo12} cannot be applied.

A standard diagonal argument shows that $\overline{\Asr}$ is closed under the weak convergence of $H^1 (\O, \R^3)$.
From Theorem \ref{prop:image_ouverte} we see that the energy 
\begin{align}
\label{def:energy_F}
 F(\vec u):=E(\vec u) +2 |D^s u^{-1}_3|(\widetilde{\O}_{\vec b}).
 \end{align}
 is well defined on \( \overline{\Asr} \).
 We start with the following lemma, which plays a role of an energy-area inequality and should be compared with Lemma \ref{eq:ineq-energy-area}. 

\begin{lemma}\label{lem:inequality_energy_area_true}
 Let \( \vec u\in \As\). Then $\left| \adj D\vec u \, \vec e_3 \right| \leq \frac12 |D \vec u|^2$.
This inequality is optimal and cannot be attained by a map in \(\As\).
\end{lemma}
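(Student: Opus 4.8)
The plan is to recognise $(\adj D\vec u)\vec e_3$ as a cross product of gradients, reduce the matrix inequality to Lagrange's identity together with the arithmetic--geometric mean inequality, and then obtain the non-attainment statement for free from the constraint $\det D\vec u>0$ that is part of the definition of $\As$.

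First I would record the purely algebraic identity $(\adj\vec F)\vec e_3=\vec F_1\times\vec F_2$ for every $\vec F\in\R^{3\times3}$, where $\vec F_1,\vec F_2$ denote the first two rows of $\vec F$; this is immediate from $\vec F\,(\adj\vec F)=(\det\vec F)\vec I$ and the defining properties of the cross product (the columns of $\adj\vec F$ are $\vec F_2\times\vec F_3$, $\vec F_3\times\vec F_1$, $\vec F_1\times\vec F_2$). Applying this with $\vec F=D\vec u(\vec x)$, whose rows are the gradients $\nabla u_1(\vec x),\nabla u_2(\vec x),\nabla u_3(\vec x)$ of the components, gives $(\adj D\vec u)\vec e_3=\nabla u_1\times\nabla u_2$ for a.e.\ $\vec x$.

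Next I would estimate, using Lagrange's identity $|\vec a\times\vec b|^2=|\vec a|^2|\vec b|^2-(\vec a\cdot\vec b)^2$ and the inequality $ab\le\tfrac12(a^2+b^2)$,
\[
|(\adj D\vec u)\vec e_3|=|\nabla u_1\times\nabla u_2|\le|\nabla u_1|\,|\nabla u_2|\le\tfrac12\big(|\nabla u_1|^2+|\nabla u_2|^2\big)\le\tfrac12\,|D\vec u|^2,
\]
the last step because $|D\vec u|^2=|\nabla u_1|^2+|\nabla u_2|^2+|\nabla u_3|^2$; this is the asserted inequality. Sharpness of the constant $\tfrac12$ I would check by exhibiting a single matrix attaining equality, e.g.\ $\vec F=\vec e_1\otimes\vec e_1+\vec e_2\otimes\vec e_2$, for which $(\adj\vec F)\vec e_3=\vec e_1\times\vec e_2=\vec e_3$ and hence $|(\adj\vec F)\vec e_3|=1=\tfrac12|\vec F|^2$.

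Finally, for the non-attainment I would argue by contradiction: if $\vec u\in\As$ and the equality $|(\adj D\vec u)\vec e_3|=\tfrac12|D\vec u|^2$ held on a set of positive measure (in particular if it held a.e.), then on that set every inequality in the chain above would be an equality, and the last one forces $|\nabla u_3|=0$; thus the third row of $D\vec u$ vanishes there, so $\det D\vec u=0$ on a set of positive measure, contradicting the requirement $\det D\vec u>0$ a.e.\ in the definition \eqref{eq:defAs_introduction} of $\As$. I do not expect a serious obstacle here: the only points requiring care are the bookkeeping of the equality cases in these elementary inequalities and the fact that all the pointwise identities hold only a.e., because $\vec u$ is merely of Sobolev regularity.
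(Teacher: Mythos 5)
Your proposal is correct and establishes the same inequality and the same non-attainment conclusion, but by a genuinely different route than the paper. The paper works in cylindrical coordinates using the explicit axisymmetric form of $D\vec u$ and $\cof D\vec u$ (see \eqref{eq:differential_cylindrical_cylindrical}, \eqref{eq:comatrix_cylindrical_cylindrical}): it reads off $|\adj D\vec u\,\vec e_3|=\frac{|v_1|}{r}\big(|\p_r v_1|^2+|\p_{x_3}v_1|^2\big)^{1/2}$, applies AM--GM, and drops the $\nabla v_2$ terms; equality then forces $\nabla v_2=0$, hence $\det D\vec v=0$. You instead use the purely algebraic Cartesian identity $(\adj\vec F)\vec e_3=\vec F_1\times\vec F_2$, which is valid for every $\vec F\in\R^{3\times 3}$ and makes the inequality a matrix inequality having nothing to do with axisymmetry; equality then forces $\nabla u_3=0$, which is the Cartesian avatar of $\nabla v_2=0$. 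Your route is cleaner in that it isolates the key structural fact (the vector $\adj D\vec u\,\vec e_3$ depends only on the first two rows of $D\vec u$, while $|D\vec u|^2$ also sees the third) without invoking the cylindrical representation, and it generalises verbatim to any component $\vec e_i$. The only small remark on the optimality point: you exhibit the tight matrix $\vec F=\operatorname{diag}(1,1,0)$, which has $\det\vec F=0$ and so is itself not the derivative of a map in $\As$; the paper instead uses the perturbed family $\vec u_\eps(\vec x)=(x_1,x_2,\eps x_3)\in\As$, $\eps\searrow0$, which shows the constant is optimal even within the class. Your degenerate matrix can be replaced by $\operatorname{diag}(1,1,\eps)$ to reach the same conclusion; as you anticipated, this is not a serious obstacle.
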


\begin{proof}
 With the expressions of \(D \vec u\) and \(\cof D \vec u\) in terms of the associated \(2D\) map \( \vec v\),  cf.\ \eqref{eq:comatrix_cylindrical_cylindrical},  we find
 \begin{equation*}
 \left| \adj D\vec u \, \vec e_3 \right| = \frac{|v_1|}{r}\left( |\p_rv_1|^2+|\p_{x_3}v_1|^2\right)^{1/2} \leq \frac12 \left( \frac{|v_1|^2}{r^2}+|\p_rv_1|^2+|\p_{x_3}v_1|^2 \right) \leq\frac12 |D \vec u|^2.
\end{equation*}
The equality implies \( \frac{v_1}{r}=(|\p_r v_1|^2+|\p_{x_3}v_1|^2)^{1/2}\) and \(\nabla v_2=0\).  This cannot be attained by a map in \(\As\), since $\nabla v_2 =0$ implies  $\det D \vec v = 0$, so $\det D \vec u = 0$.
Nonetheless, the constant is optimal in $\As$, as can be checked by considering $v_1 (r, x_3) = r$ and $v_2 (r, x_3) = \e x_3$ for $\e\searrow 0$, which corresponds to $\vec u (\vec x) = (x_1, x_2, \e x_3)$.
\end{proof}

The following lower semicontinuity result is the cornerstone of the strategy in this paper for the study of the regularity of the minimizers of the neo-Hookean energy.

\begin{proposition}\label{prop:F_lower_semi_cont}
The energy \(F\) defined in \eqref{def:energy_F} is  sequentially lower semicontinuous in \(\overline{\Asr}\) for the weak convergence in \(H^1 (\widetilde{\O}, \R^3)\).
\end{proposition}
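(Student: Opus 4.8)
The plan is to argue by a blow-up/truncation scheme around the singular segment $L$, decoupling the Dirichlet energy far from $L$ (where everything is nice) from the concentration of energy near $L$ (which is captured by $\|D^s u_3^{-1}\|_{\mathcal M}$). So suppose $\vec u_n \weakc \vec u$ in $H^1(\widetilde\Omega,\R^3)$ with all $\vec u_n\in\overline{\Asr}$; by Lemma \ref{lem:closedeness_of_Asym} the limit $\vec u$ is in $\As$, $\det D\vec u_n\weakc \det D\vec u$ in $L^1$, $\chi_{\imG(\vec u_n,\widetilde\Omega)}\to \chi_{\imG(\vec u,\widetilde\Omega)}$ a.e., $E$ is lower semicontinuous, and by a standard diagonal argument $\vec u\in\overline{\Asr}$. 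By Proposition \ref{prop:image_ouverte} and Proposition \ref{prop:tracesu-1} the inverses $\vec u_n^{-1},\vec u^{-1}$ are in $BV(\widetilde\Omega_{\vec b},\R^3)$, with $\|\vec u_n^{-1}\|_{BV}$ uniformly bounded, $\vec u_n^{-1}\to\vec u^{-1}$ in $L^1$ and a.e.\ (Lemma \ref{le:convu-1}), and the support of the singular parts and the traces at jump points all lie over $L$. Since $E(\vec u)\le\liminf E(\vec u_n)$ already holds, it suffices to show
\[
 2\|D^s u_3^{-1}\|_{\mathcal M(\widetilde\Omega_{\vec b},\R^3)} \;\le\; \liminf_{n\to\infty}\Big(E(\vec u_n)-E(\vec u)\Big)
 \;+\; 2\liminf_{n\to\infty}\|D^s (u_n)_3^{-1}\|_{\mathcal M},
\]
and in fact, since each $\vec u_n\in\Asr$ has $\mathcal E(\vec u_n)=0$ and hence $D^s(u_n)^{-1}$ is supported on $L$'s image but its mass need not vanish, I expect the cleaner target: to bound $2\|D^s u_3^{-1}\|_{\mathcal M}$ by $\liminf_n\big(E(\vec u_n) - E(\vec u_{\mathrm{far}})\big)$ where $E(\vec u_{\mathrm{far}})$ is the Dirichlet+$H$ energy computed away from a small cylinder.

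The mechanism is the energy-area inequality of Lemma \ref{lem:inequality_energy_area_true}, $|\adj D\vec u\,\vec e_3|\le\frac12|D\vec u|^2$, combined with the change of variables $\int |\cof D\vec u\,\vec e_3|\,\dd\vec x = \int |D u_3^{-1}|\,\dd\vec y$ of Lemma \ref{le:u-1} (formula \eqref{eq:formulaDu-1}) valid on $\imT(\vec u,\widetilde\Omega\setminus L)$. Concretely: fix $\delta>0$ and a test vector field / good cylinder $C_\delta$. On $\widetilde\Omega\setminus\overline C_\delta$ the maps are continuous with zero surface energy, $\vec u_n\to\vec u$ uniformly on compacta (Lemma \ref{N condition}.(c)), and the $2D$ functions $\vec v_n\to\vec v$ locally uniformly; here the Dirichlet term is weakly lower semicontinuous. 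The key point is the transition layer: using the coarea/slicing argument sketched in the Introduction, for the area vector $(\cof D\vec u_n)\vec e_3$ over a $\delta$-disk one has $\int_{C_\delta}|D(\vec u_n)|^2 \ge 2\int_{C_\delta}|(\cof D\vec u_n)\vec e_3|\,\dd\vec x = 2\int_{\vec u_n(C_\delta)}|D (u_n)_3^{-1}|\,\dd\vec y$. One then needs to pass to the limit in the right-hand side and show that, as $n\to\infty$ and then $\delta\to0$, $\int_{\vec u_n(C_\delta)}|D(u_n)_3^{-1}|$ controls $\|D^s u_3^{-1}\|_{\mathcal M}$ from above. This is where the BV compactness of the inverses and the localization $\supp D^s u^{-1}\subset\imT(\vec u,L)$ (Proposition \ref{prop:image_ouverte}.(iii), Proposition \ref{prop:tracesu-1}) enter: the sets $\vec u_n(C_\delta)$ shrink (in measure) to a set containing $\imT(\vec u,L)$, on which the total variation of $u_n^{-1}$ must asymptotically dominate the singular mass $\|D^s u_3^{-1}\|_{\mathcal M}$ by lower semicontinuity of total variation under $L^1$ convergence, localized to neighbourhoods of $\imT(\vec u,L)$. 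I would formalize "$\vec u_n(C_\delta)$ captures $\imT(\vec u,L)$" via the monotonicity of topological images (Lemma \ref{le:imTmonotone}) together with \eqref{eq:created_surface_is_contained}.

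The factor $2$ (and the restriction to the third component $u_3^{-1}$ rather than all of $D^s\vec u^{-1}$) is exactly the point where conformality/optimality of Lemma \ref{lem:inequality_energy_area_true} is used: the horizontal components $u_1^{-1},u_2^{-1}$ are already $W^{1,1}$ (Proposition \ref{prop:image_ouverte}.(iv), Proposition \ref{regularity first 2 components}), so $D^s\vec u^{-1}=D^s u_3^{-1}\otimes\vec e_3$ modulo absolutely continuous parts, and the area vector normal to the created surface is what the Dirichlet energy sees with constant exactly $2=2\cdot\frac12^{-1}\cdot\frac12$ wait—more precisely $\frac12|D\vec u|^2\ge|(\cof D\vec u)\vec e_3|$ gives, after integrating and changing variables, $E(\vec u_n)\ge 2\int|D(u_n)_3^{-1}|$ over the relevant cylinder up to lower-order horizontal terms.

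\medskip

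\noindent\textbf{Expected main obstacle.} The delicate step is the limit passage in the transition layer $C_\delta$, i.e.\ showing
\[
 \liminf_{n\to\infty}\int_{\vec u_n(C_\delta)} |D (u_n)_3^{-1}|\,\dd\vec y \;\ge\; \|D^s u_3^{-1}\|_{\mathcal M(\widetilde\Omega_{\vec b},\R^3)} \;-\; o(1)_{\delta\to0}.
\]
The difficulty is that the ambient sets $\vec u_n(C_\delta)$ depend on $n$, so one cannot directly invoke lower semicontinuity of the total variation on a fixed open set; one must either (i) compare $\vec u_n(C_\delta)$ with a fixed neighbourhood $N_\delta$ of $\imT(\vec u,L)$ for $n$ large, using uniform convergence of $\vec v_n$ on the boundary circle $r=\delta$ and the monotonicity of $\imT$, to sandwich $N_{\delta/2}\subset \imT(\vec u_n,C_\delta)\subset N_{2\delta}$; or (ii) work with the $2D$ functions, where $v_n^{-1}$ is $\mathcal H^1$-continuous (Propositions 5.14–5.16 of \cite{BaHeMo17}, as used in Proposition \ref{regularity first 2 components}) and the slicing is cleaner, then integrate back in $\theta$. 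Handling the possible non-vanishing of $\|D^s (u_n)_3^{-1}\|_{\mathcal M}$ for the regular maps $\vec u_n$ themselves — whose inverses may jump even though $\mathcal E(\vec u_n)=0$ — also requires care: one should note that for $\vec u_n\in\Asr$ the surface energy vanishes so $\bar\E_{\vec u_n}\equiv0$ and hence, by Lemma \ref{lem:linkBV}, $D^s(u_n)^{-1}$ has no part charging test fields $\phi$ supported away from $L$; but a genuine jump of $(u_n)_3^{-1}$ across $\imT(\vec u_n,L)$ is still possible, so the inequality must be stated and proved keeping the term $\|D^s(u_n)_3^{-1}\|_{\mathcal M}$ on the right, and then one checks it converges appropriately (or is simply discarded since it is $\ge0$, giving a one-sided bound that still suffices because $E(\vec u_n)$ already dominates $2\int_{C_\delta}|(\cof D\vec u_n)\vec e_3|$). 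I would organize the final write-up as: (1) setup and reductions via Section \ref{sec:weak_limits}; (2) the far-from-axis lower semicontinuity of $E$; (3) the cylinder estimate via Lemma \ref{lem:inequality_energy_area_true} and change of variables; (4) the layer limit via Lemma \ref{le:imTmonotone}, \eqref{eq:created_surface_is_contained} and BV lower semicontinuity; (5) letting $\delta\to0$ and adding up.
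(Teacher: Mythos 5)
Your setup, reductions, and identification of the key mechanism (the conformality-type inequality $|\adj D\vec u\,\vec e_3|\le\tfrac12|D\vec u|^2$ producing the factor $2$; the $W^{1,1}$ regularity of the horizontal components so that only $D^s u_3^{-1}$ survives; the localization of the singular part over $\imT(\vec u,L)$; BV compactness and $L^1$/a.e.\ convergence of the inverses) all match the paper. The gap is precisely where you flag it: the transition-layer estimate is not proved, and the two strategies you sketch for it do not obviously work, while the paper resolves it with a trick that goes in the opposite direction to yours.

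You set up the inequality as a push-forward estimate over $\vec u_n(C_\delta)$ and then struggle because the sets $\vec u_n(C_\delta)$ move with $n$; your proposed fix (sandwich $\imT(\vec u_n,C_\delta)$ between fixed neighbourhoods $N_{\delta/2}\subset\imT(\vec u_n,C_\delta)\subset N_{2\delta}$ for large $n$) cannot be established from uniform convergence of $\vec v_n$ on the circle $r=\delta$: that controls $\vec u_n(\partial C_\delta)$, not the interior image. In the Conti--De Lellis example the image of an arbitrarily thin cylinder covers a sphere of fixed radius, so $\vec u_n(C_\delta)$ is genuinely wild inside and there is no reason for such a sandwich to hold. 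The paper avoids this entirely by \emph{pulling back from a fixed target-side open set} $V$ containing $\imT(\vec u,L)$ (possible because $\mathcal L^3(\imT(\vec u,L))=0$), and then splitting the preimage $\vec u_k^{-1}(V)=\big(\vec u_k^{-1}(V)\cap C_{\delta_1}\big)\cup\big(\vec u_k^{-1}(V)\setminus C_{\delta_1}\big)$. The near-axis part is bounded by $\tfrac12\int_{C_{\delta_1}}|D\vec u_k|^2$ via Lemma \ref{lem:inequality_energy_area_true}; the far part has small measure because $|\vec u_k^{-1}(V)\setminus C_{\delta_1}|\le\int_V\det D\vec u_k^{-1}$ and $\{\det D\vec u_k^{-1}\}_k$ is equiintegrable (via $H_1(t)=tH(1/t)$), and this small-measure set contributes little thanks to the equiintegrability of $\{\chi_{\widetilde\Omega\setminus C_{\delta_0}}\cof D\vec u_k\}_k$ (an axisymmetric $2D$ fact). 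Once this is done, one is comparing total variations on the \emph{fixed} open set $V$ and can simply use lower semicontinuity of the total variation under $BV$-weak-$*$ convergence — no sandwiching of $n$-dependent image sets is ever needed. You never mention the two equiintegrability facts, which are the load-bearing technical ingredients here. One smaller inaccuracy: for $\vec u_n\in\Asr$ the inverse is in $W^{1,1}$ (Proposition \ref{INV2}.(b)), so $D^s(u_n)^{-1}=0$; the reason the singular term must be kept on the right is that the sequence lives in $\overline{\Asr}$, not $\Asr$.
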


\begin{proof}
Recall from Theorem \ref{prop:image_ouverte} that \(\overline{\Asr} \subset \As\).
Let \( \{ \vec u_k \}_{k \in \N} \) be a sequence in \(\overline{\Asr}\) tending weakly in $H^1 (\widetilde{\O}, \R^3)$ to $\vec u \in \overline{\Asr}$.
Thanks to Theorem \ref{prop:image_ouverte}\,\ref{item:ouverteiii}), the $BV$ norm of \( \vec u_k^{-1} \) is bounded, so, due to Lemma \ref{le:convu-1}, we have that, up to a subsequence, \(\vec u_k^{-1} \weakcs \vec u^{-1} \) in \(BV(\widetilde{\Om}_{\vec b}, \R^3)\) and a.e\@.
By Proposition \ref{prop:closedeness_of_Asym}, we have that \(\det D\vec u_k \rightharpoonup \det D 
\vec u\) in \(L^1(\widetilde{\Om})\) and, because of the convexity of $H$, 
\begin{equation}\label{eq:Hdetuk}
\int_{\widetilde{\Om}} H(\det D \vec u)\leq \liminf_{k\rightarrow \infty }\int_{\widetilde{\Om}} H(\det D\vec u_k). 
\end{equation}

We first prove that the sequence $\{ \det D\vec u_k^{-1} \}_{k \in \N}$ is equiintegrable. This can be proved as in \cite[Prop.\ 7.8]{BaHeMo17}.
Indeed,
define $H_1:(0,\infty)\to \R$ as $H_1(t):=tH(1/t)$. Then $H_1$ grows superlinearly at infinity and 
$$\int_{\widetilde{\Omega}_{\vec b}} H_1(\det D\vec u_k^{-1}) \, \dd\vec y = \int_{\widetilde{\Omega}} H(\det D\vec u_k) \, \dd \vec x \leq E(\vec b).$$
Thus the equiintegrability follows from the De La Vall\'ee Poussin criterion.

Now, let $\eps >0$. Recall that \(C_\delta\) is given by \eqref{def:Cdelta}.  Choose $\delta_0>0$ such that 
\begin{align}
    \label{eq:pLsc-1}
  \int_{C_{\delta_0}\cap \widetilde{\O}} |D\vec u|^2 \, \dd\vec x <\eps.
\end{align}

Because of the axial symmetry, it can be seen that the sequence $\{ \chi_{\widetilde{\Omega}\setminus C_{\delta_0}} \cof D\vec u_k \}_{k \in \N}$
is equiintegrable, cf.\ \cite[Th.\ 1.3]{HeRo18}. This is due to the fact that the corresponding 2D maps \( \vec v_k\) are bounded in \(H^1 \big(\pi (\widetilde{\Omega} \setminus \overline C_{\delta_0} ),\R^2\big)\) and then by a result of M\"uller \cite{Muller90}, since we also have \(\det D \vec v_k>0\) a.e., \( \det D \vec v_k\) are equiintegrable. Now we obtain the equiintegrability result for $\{ \chi_{\widetilde{\Omega}\setminus C_{\delta_0}} \cof D\vec u_k \}_{k \in \N}$ by expressing the cofactor matrix in terms of the 2D map \( \vec v_k\) and observing that one entry is \( \det D \vec v_k\) and the others are products of a sequence converging strongly in \(L^2\) by a sequence converging weakly in \(L^2\); cf.\ \eqref{eq:comatrix_cylindrical_cylindrical} in the Appendix.

Hence, there exists $\eta>0$, independent of $k$,  such that if \(A \subset \widetilde{\O}\) is measurable,
\begin{align} 
  \label{eq:pLsc-2}
  |A|<\eta\ \Rightarrow\ \int_{A\setminus C_{\delta_0}} \left| \cof D\vec u_k \right| \dd\vec x < \eps, \quad \forall k \in \mathbb{N}.
\end{align}

Given any open subset $V$ of $\widetilde{\Omega}_{\vec b}$ (which we shall later choose to be a thin neighbourhood of $\imT(\vec u, L)$), and any good $\delta_1<\delta_0$,
\begin{align}
    \label{eq:pLsc-5}
  \int_V |\nabla (\vec u_k^{-1})_3| \, \dd\vec y = 
   \int_{\vec u_k^{-1}(V)\cap C_{\delta_1}}|\adj \nabla \vec u_k \, \vec e_3|  \, \dd\vec x + \int_{\vec u_k^{-1}(V)\setminus C_{\delta_1}} |\adj \nabla \vec u_k \, \vec e_3| \, \dd\vec x.
\end{align}
By Lemma \ref{lem:inequality_energy_area_true} the first integral 
in the right-hand side of \eqref{eq:pLsc-5} is bounded by the integral of $\frac12 |D\vec u_k|^2$ in $C_{\delta_1}\cap \widetilde{\O}$.
As for the second integral, note that
\begin{align}
    \label{eq:pLsc-3}
   |\vec u_k^{-1}(V) \setminus C_{\delta_1}| \leq \int_{V} \det D\vec u_k^{-1} \, \dd\vec y.
\end{align}

	Since 
	$\mathcal L^3 \big (\imT(\vec u, L)\big )=0$
	combining \eqref{eq:pLsc-3}, \eqref{eq:pLsc-2}, and the equiintegrability of 
	$\{ \det D\vec u_k^{-1} \}_{k \in \N}$
	it is possible to find $\delta_1>0$, with \(\delta_1<\delta_0\) and an open set $V\subset \widetilde{\Omega}_{\vec b}$ such that 
	\begin{align}
	  \label{eq:pLsc-4}
	  \imT(\vec u, L)\subset V
	  \quad\text{and}\quad
	  \int_{\vec u_k^{-1}(V)\setminus C_{\delta_1}} \left| \cof D\vec u_k \right| \dd\vec x < \eps,  \quad \forall k \in \mathbb{N}.
	\end{align}

By \eqref{eq:pLsc-4}, for this $V$ we get
\begin{equation*}
  \int_V |\nabla (\vec u_k^{-1})_3| \, \dd\vec y
  \leq \frac12 \int_{ C_{\delta_1}\cap \widetilde{\O}} |D\vec u_k|^2 \, \dd\vec x + \eps,
\end{equation*}
and therefore
\begin{align*}
 |D (\vec u_k^{-1})_3|(V) & 
 = \int_{V} |\nabla (\vec u_k^{-1})_3| \, \dd \vec y +|D^s (\vec u_k^{-1})_3|(V) \\
 &  \leq \frac12 \int_{C_{\delta_1}\cap \widetilde{\O}} |D\vec u_k|^2 \, \dd\vec x + \eps 
 +|D^s (\vec u_k^{-1})_3|(\widetilde{\O}_{\vec b}).
\end{align*}
Observe that by Theorem \ref{prop:image_ouverte}\,\ref{item:ouverteiii}) the inclusions $ \supp D^s \vec u^{-1} \subset \imT (\vec u,L)\subset V$ hold.
Then, by the inequality above, as $\vec u_k^{-1} \weakcs \vec u^{-1}$ in $BV(\widetilde{\O}_{\vec b},\R^3)$ we have that 
\begin{equation}\label{eq:relax2'}
| D^s (\vec u^{-1})_3|(\widetilde\Omega_{\vec b}) \leq \eps + \liminf_{k \to \infty} \left[ \frac12 \int_{C_{\d_1}\cap \widetilde{\O}} |D \vec u_k|^2 \, \dd \vec x 
+|D^s (\vec u_k^{-1})_3|(\widetilde{\O}_{\vec b}) \right] .
\end{equation}
On the other hand, by \eqref{eq:pLsc-1}, as $\vec u_k \weakc \vec u$ in $H^1 (\widetilde{\O}, \R^{3 \times 3})$  
we have also that
\begin{align}\label{eq:relax4'}
	   \frac{1}{2}
   \int_{C_{\delta_1}\cap \widetilde{\O}} |D\vec u|^2 \, \dd \vec x +
	   \frac{1}{2}
   \int_{\widetilde{\O} \setminus C_{\d_1}} |D\vec u|^2 \, \dd \vec x 
 \leq \eps +  \liminf_{k\rightarrow \infty} \frac12 \int_{\widetilde{\O} \setminus C_{\d_1}} |D \vec u_k|^2 \, \dd \vec x. 
\end{align}
Gathering \eqref{eq:relax2'} and \eqref{eq:relax4'}, since $\e>0$ is arbitrary 
we obtain  that
\begin{equation}\label{eq:Important}
 \frac12 \int_{\widetilde{\O}} |D\vec u|^2 +|D^s(\vec u^{-1})_3|(\widetilde{\O}_{\vec b})
 \leq \liminf_{k\rightarrow \infty} \left[ \frac12 \int_{\widetilde{\O}} |D \vec u_k|^2  
 +|D^s (\vec u_k^{-1})_3|(\widetilde{\O}_{\vec b}) \right].
\end{equation} 
The proof of the proposition is concluded by gathering \eqref{eq:Hdetuk} and \eqref{eq:Important}.
\end{proof}

\begin{remark}
Without the Sobolev regularity for the horizontal components of the inverse, the estimate of the 
first term of the right-hand side of \eqref{eq:pLsc-5} would have been made for the whole cofactor matrix, yielding in
\eqref{def:energy_F} the suboptimal prefactor $\sqrt{3}$ of Lemma \ref{eq:ineq-energy-area}
instead of the prefactor $2$ coming from Lemma \ref{lem:inequality_energy_area_true}.
\end{remark}

\begin{remark}
From Proposition \ref{prop:F_lower_semi_cont} we get  in particular that
if  \(\vec u_k\) is a sequence in \(\Asr\) with $H^1$-weak limit $\vec u$, then 
$$F(\vec u) \leq \liminf_{k\rightarrow \infty} E(\vec u_k).$$ 
\end{remark}

\smallskip
Since we are in the presence of a problem of lack of compactness it is natural to try and describe the space $\oAsr$ and the relaxed energy defined on this space by
\begin{equation*}
 E_{\rel}(\vec u):= \inf \{ \liminf_{k\rightarrow \infty} E(\vec u_k) : \{ \vec u_k \}_{k \in \N} \in \Asr \text{ and } \vec u_k \rightharpoonup \vec u \text{ in } H^1 (\O, \R^3) \}.
\end{equation*}

It is well known that $E_{\rel}$ is the largest lower semicontinuous functional in $\overline{\Asr}$ (for the $H^1$-weak topology) that is below $E$ in $\Asr$.
Since $F$ is lower semicontinuous in $\overline{\Asr}$ and
\begin{equation*}
 E_{\rel} = E \quad \text{in } \Asr
\end{equation*}
we conclude that
\begin{equation}\label{eq:lowerbound}
 E_{\rel}\geq F \quad \text{in } \overline{\Asr} .
\end{equation}

It is tempting to conjecture that the equality \(E_{\rel}=F\) holds at least for some special choices of the function \(H\). 
In view of Proposition \ref{prop:F_lower_semi_cont} (and its consequence \eqref{eq:lowerbound}), 
it remains to characterize $\oAsr$ and to show that for any $\vec u \in \oAsr$ there exists a sequence $\{ \vec u_n \}_{n \in \N} \subset \Asr$ 
converging weakly to $\vec u$ in $H^1 (\O, \R^3)$ such that
\[
 \lim_{n \to \infty} E (\vec u_n) = F (\vec u) .
\]
There are serious difficulties in constructing this sequence $\{ \vec u_n \}_{n \in \N}$ (if it exists at all).
One of them relies on the restrictions of being orientation-preserving and injective a.e., even though there are some partial results in this direction (see \cite{IwKoOn11,HenclMo15,CoDo15,MoOl19,DePr19} and the references therein).

At any rate, the interest of defining the relaxed energy in an abstract way is to be able to prove that it attains its infimum in \(\overline{\Asr}\), and that the initial energy attains its minimum in \(\Asr\) if and only if there exists a minimizer of \(E_{\rel}\) in \(\overline{\Asr}\) which is in \(\Asr\).
These two facts are classical in the theory of relaxation and follow from abstract arguments.
The energy \(F\)  satisfies analogous properties and, hence, can be a substitute of \(E_{\rel}\).

\begin{theorem}\label{th:main3}
The energy $F$ has a minimizer  in \(\overline{\Asr}\).
If it belongs to \(\Asr\), then it is also a minimizer of \(E\).
\end{theorem}

\begin{proof}
Recall that \(\overline{\Asr}\) is closed for the weak convergence in \(H^1 (\widetilde{\O}, \R^3)\).
It is also bounded in \(H^1 (\widetilde{\O}, \R^3)\).
From Proposition \ref{prop:F_lower_semi_cont}, \(F\) is lower semicontinuous in \(\overline{\Asr}\).
Clearly, $F$ is  coercive in \(\overline{\Asr}\).
This readily implies the existence of minimizers.

As for the second part of the statement, we assume that there exists a minimizer \(\vec u_0\) of \(F\) in \(\overline{\Asr}\) such that \(\vec u_0\in \Asr\).
We then have \( F(\vec u_0)\leq F(\vec w)\) for any \(\vec w\in \overline{\Asr}\).
But since \(F =E \) in \(\Asr\), we find that \(E(\vec u_0)\leq E(\vec w)\) for all \(\vec w\in \Asr\).
That is, \(\vec u_0\) is a minimizer of \(E\) in \(\Asr\).
\end{proof}

In the same vein, we have the following result.
\begin{proposition}\label{pr:Eattainsinfweakclosure}
The energy $E$ has a minimizer  in \(\overline{\Asr}\).
\end{proposition}
\begin{proof}
From Lemma \ref{le:weakconv}  we have that \(\overline{\Asr}  \subset \As\) and from Proposition  \ref{prop:closedeness_of_Asym} 
that \(E\) is lower semicontinuous on \(\As \).
Moreover, \(\overline{\Asr}\) is closed for the $H^1$-weak convergence. As noted before, $E$ is coercive in  \(\As \).
These are the three main ingredients to obtain the conclusion.
\end{proof}

It would be nice to have an explicit description of \(\overline{\Asr}\).
Although this characterization is missing, we are able to prove the existence of minimizers of the energy \(F\) in 
the explicit space $\mathcal{B}$
defined in
\eqref{eq:defB}
which is a priori larger than \(\overline{\Asr}\).
Indeed, from Theorem \ref{prop:image_ouverte} we have that \(\overline{\Asr}\subset \mathcal{B} \subset \As\).
Besides, the energy \(F\) is well defined on \(\mathcal{B}\),
it controls the $BV$ norm of the inverses, and 
a slight adaptation of Proposition \ref{prop:F_lower_semi_cont}
yields the lower semicontinuity of $F$ in $\mathcal{B}$.

\begin{proposition}\label{prop:F_lower_semi_contB}
The energy \(F\) is  sequentially lower semicontinuous in \(\mc{B}\) for the $H^1$-weak convergence.
\end{proposition}

\begin{proof}
Let \( \{ \vec u_k \}_{k \in \N} \) be a sequence in \(\mc{B}\) tending weakly in $H^1 (\widetilde{\O}, \R^3)$ to $\vec u \in \mc{B}$.
We can assume that $\liminf_{k \to \infty} F(\vec u_k) < \infty$.
In particular, $\sup_{k \in \N} | D \vec u^{-1}_k |(\widetilde{\O}_{\vec b}) < \infty$.
As $\| \vec u_k \|_{L^{\infty} (\widetilde{\O}, \R^3)}$ and, hence, $\| \vec u_k \|_{L^1 (\widetilde{\O}, \R^3)}$ are bounded, the $BV$ norm of \( \vec u_k^{-1} \) is bounded, so, due to Lemma \ref{le:convu-1}, we have that, up to a subsequence, \(\vec u_k^{-1} \weakcs \vec u^{-1} \) in \(BV(\widetilde{\Om}_{\vec b}, \R^3)\) and a.e\@.
From here, the proof is the same as in Proposition \ref{prop:F_lower_semi_cont}.
\end{proof}

\begin{proof}[Proof of Theorem \ref{th:main_theorem_introduction}]
Let \( \{\vec u_k\}_k\) be a minimizing sequence for \(F\) in \(\mathcal{B}\).
Clearly \(\sup_k F(\vec u_k)<\infty\), and we can assume that \(\vec u_k \rightharpoonup \vec u\) in \(H^1 (\widetilde{\O}, \R^3)\).
Since, by Proposition \ref{prop:F_lower_semi_contB}, \(F\) is lower semicontinuous for the weak convergence in \(H^1\) of maps in \(\mc{B}\), it suffices to show that the weak limit \(\vec u\) is in \(\mathcal{B}\).
We know from Proposition \ref{prop:closedeness_of_Asym} that \(\vec u\in \As\), \(\det D\vec u_k \rightharpoonup \det D\vec u\) 
in \(L^1(\widetilde{\Om})\) and \( \imG(\vec u_k,\widetilde{\Om}) \to\imG(\vec u,\widetilde{\Omega})\) a.e\@.
In particular, \( \widetilde{\Omega}_{\vec b} = \imG(\vec u,\widetilde{\Om})\) a.e.\ and from \eqref{le:imL_union_imOminusL}, \(\imT(\vec u,L)\) is a null Lebesgue set. 
Now we use Lemma \ref{eq:ineq-energy-area} to show that
\begin{align*}
F(\vec u_k) &\geq \int_{\widetilde{\O}} \left| \cof D \vec u_k \right| \dd \vec x 
+2|D^s \vec u_k^{-1}|(\widetilde{\O}_{\vec b})
\\ &\geq \int_{\imG(\vec u,\widetilde{\O})}|\nabla \vec u_k^{-1}| \, \dd \vec y
+|D^s \vec u_k^{-1}|(\widetilde{\O}_{\vec b})
=  |D\vec u_k^{-1} |(\widetilde{\O}_{\vec b}).
\end{align*}
As $\{ \vec u_k^{-1} \}_{k \in \N}$ is bounded in $L^{\infty} (\widetilde{\O}_{\vec b}, \R^3)$, 
we find that $\vec u_k^{-1}$ is bounded in $BV(\widetilde{\O}_{\vec b},\R^3)$.
Up to a subsequence, thanks to Lemma \ref{le:convu-1}, we have that $\vec u_k^{-1}  \rightarrow \vec u^{-1}$ in $L^1(\widetilde{\O}_{\vec b},\R^3)$ and a.e., with $\vec u^{-1} \in BV(\widetilde{\O}_{\vec b},\R^3)$. From Proposition \ref{regularity first 2 components} we also infer that \( u_1^{-1}, u_2^{-1}\) are in \(W^{1,1}(\widetilde{\O}_\vec b)\).
This proves that $\vec u$ minimizes $F$ in $\mathcal{B}$.

The other statement of Theorem \ref{th:main_theorem_introduction} can be shown as in the proof of Theorem \ref{th:main3}.
\end{proof}

The following is a summary of existence results we obtained in this article.

\begin{center}
\def\arraystretch{1.3}
\begin{tabular}{|c|c|c|c|c|c|}
\hline
\text{Spaces }& $\Asr$ & $ \overline{\Asr}$ & $\overline{\Asr}$ & $\mathcal{B}$ & $\As$ \\
\hline
\text{Energies} &  $E$ & $E$ & $F$ & $F$ & $E$ \\
\hline
\text{Minimizers} & \text{?} & \text{yes} Prop.\ \ref{pr:Eattainsinfweakclosure} &  \text{yes} Th.\ \ref{th:main3}    &  \text{yes} Th.\ \ref{th:main_theorem_introduction} & \text{yes} Th.\ \ref{th:main1}\\
\hline
\end{tabular}
\end{center}

\section*{Appendix: Working with axially symmetric maps}

We recall from the Appendix in \cite{HeRo18} that if \(\vec u: \O \rightarrow \R^3\) is axisymmetric and is given in cylindrical coordinates 
by \( \vec u(r\cos \theta, r\sin \theta, x_3)= v_1(r,x_3) \vec e_r +v_2(r,x_3) \vec e_3\) then
\begin{equation}\label{eq:comatrix_cylindrical_cylindrical}
\begin{split}
 & D\vec u= \begin{pmatrix}
\p_rv_1 & 0 & \p_{x_3} v_1 \\
0 & \frac{v_1}{r} & 0 \\
\p_rv_2 & 0 & \p_{x_3}v_2
\end{pmatrix}, \qquad
\cof D \vec u = \begin{pmatrix}
\frac{v_1}{r}\p_{x_3}v_2 & 0 & -\frac{v_1}{r}\p_rv_2 \\
0 & \det D v & 0 \\
-\frac{v_1}{r}\p_{x_3}v_1 & 0 & \frac{v_1}{r}\p_rv_1
\end{pmatrix}, \\
 & \det D\vec u = \frac{1}{r} v_1 \det D \vec v,
\end{split}
\end{equation}
and the Dirichlet energy is given by
\begin{equation*}
 \int_{\Om} |D \vec u|^2 \, \dd \vec x = 2\pi \int_{\pi(\O)}\left(|\p_r \vec v|^2+|\p_{x_3} \vec v|^2\right)r \, \dd r \, \dd x_3 +2\pi \int_{\pi(\O)}\frac{v_1^2}{r} \, \dd r \, \dd x_3.
\end{equation*}

\section*{Acknowledgements}
We gratefully acknowledge J.~Ball for his observation (see Section \ref{sec:existence_as})
that in the axisymmetric setting cavitation does not truly show that the neo-Hookean energy
fails to be $H^1$-quasiconvex.

Marco Barchiesi has been supported by project VATEXMATE.
Duvan Henao has been funded by the FONDECYT project 1190018.
Carlos Mora-Corral has been supported by the Agencia Estatal de Investigaci\'on of the Spanish Ministry of Research and Innovation, 
through project PID2021-124195NB-C32 and the Severo Ochoa Programme for Centres of Excellence in R\&D CEX2019-000904-S, 
by the Madrid Government (Comunidad de Madrid, Spain) under the multiannual Agreement with UAM in the line for the Excellence 
of the University Research Staff in the context of the V PRICIT (Regional Programme of Research and Technological Innovation), and by the ERC Advanced Grant 834728.
R\'emy Rodiac has been partially supported by the ANR project BLADE Jr. ANR-18-CE40-0023.\\ 

There is no conflict of interest and there is no data attached to this manuscript.
{\small
\bibliography{biblio} \bibliographystyle{siam}
}

\end{document}